\documentclass[12pt]{amsart}


\usepackage{amsmath,amstext,amssymb,amsopn,amsthm}
\usepackage{url,verbatim,hyperref}
\usepackage{mathtools}
\usepackage{enumerate}

\usepackage{xcolor,graphicx}

\usepackage[margin=30mm]{geometry}
\usepackage{eucal,mathrsfs,dsfont}
\usepackage[section]{placeins}
\usepackage{caption}
\allowdisplaybreaks

\newtheorem{theorem}{Theorem}[section]
\newtheorem{corollary}[theorem]{Corollary}
\newtheorem{lemma}[theorem]{Lemma}
\newtheorem{proposition}[theorem]{Proposition}

\theoremstyle{definition}

\newtheorem{definition}[theorem]{Definition}
\newtheorem{assumption}[theorem]{Assumption}

\newtheorem{remark}[theorem]{Remark}
\newtheorem{example}[theorem]{Example}

\theoremstyle{remark}

\setcounter{step}{0}

\numberwithin{equation}{section}

\newcommand{\eps}{\varepsilon}

\newcommand{\calL}{\mathcal{L}}
\newcommand{\calF}{\mathcal{F}}

\newcommand{\calA}{\mathcal{A}}
\newcommand{\calD}{\mathcal{D}}

\newcommand{\calT}{\mathcal{T}}

\newcommand{\calX}{\mathcal{X}}

\newcommand{\calC}{\mathcal{C}}

\newcommand{\R}{\mathds{R}}

\newcommand{\bV}{\mathbf{V}}

\newcommand{\Z}{{\mathbb Z}}

\newcommand{\prt}{\partial}

\newcommand{\wh}{\widehat}
\newcommand{\wt}{\widetilde}

\DeclareMathOperator{\Leb}{Leb}

\def\bone{{\bf 1}}

\def\bv{{\bf v}}

\newcommand{\red}[1]{{\color{red} #1}}

\title[Transport equations   with freezing]{Coupled transport equations with freezing}
\author{Krzysztof Burdzy and John Sylvester}

\address{Department of Mathematics, Box 354350, University of Washington, Seattle, WA 98195}
\email{burdzy@uw.edu}
\email{sylvest@uw.edu}

\thanks{Research supported in part by Simons Foundation Grants 506732 and 928958. }

\pagestyle{headings}

\begin{document}

\begin{abstract}
We study a system of two coupled transport equations with freezing.  The solutions freeze in time when they are equal. We prove existence and uniqueness of continuous solutions if the initial conditions are continuous. We discuss several qualitative and quantitative properties of the solutions. The equations arise in a model for collisions of a large number of tightly spaced balls.
\end{abstract}

\maketitle

\section{Introduction}\label{intro}

We will examine a system of nonlinear partial differential
equations \eqref{a18.2}-\eqref{a18.3} below, subject to the  constraint \eqref{a18.4}.
\begin{align}\label{a18.2}
v_t &= - v_x
\bone_{\{v-w >0\}}, \\
w_t &= \label{a18.3}
 w_x
\bone_{\{v-w >0\}},\\
v&-w  \geq 0 ,\label{a18.4}
\end{align}
for $v(x,t)$ and $w(x,t)$, $x\in \R$, $t\geq 0$. 
Our main results include existence of continuous solutions
for all continuous initial  conditions satisfying (\ref{a18.4}),
and uniqueness under natural extra assumptions. We will also present several  qualitative and quantitative results on behavior of the solutions. In particular, we will provide information on shapes of  ``liquid'' and ``frozen'' zones, and behavior of characteristics.

The equations
 \eqref{a18.2}-\eqref{a18.4} are equivalent to equations \eqref{n10.1}-\eqref{n10.3} which arose in analysis of ``pinned balls'' on a line. The general model of ``pinned balls'' was first introduced  in \cite{fold}.
Evidence was presented in \cite{KBJHSS} that
\eqref{n10.1}-\eqref{n10.3}  provide a reasonably
accurate description of  the evolution of parameters of a stochastic model for a large family of pinned balls on a line. The first steps towards solving this system of PDEs were taken in \cite{KBAO}.

Besides the physical motivation, we believe that the
system provides an interesting example of a nonlinear
system of first order PDE's where characteristics
intersect, but solutions remain continuous. Solutions to
\eqref{a18.2}-\eqref{a18.4} with generic initial
conditions are typically nondifferentiable along some
curves but they never develop the shocks that typically occur in
such systems.

\subsection{Pinned balls}\label{sec:pinn}

We will briefly describe the model that motivated the PDEs studied in this paper. 
In a system of pinned billiard balls, introduced in
\cite{fold}, the  balls touch some other balls and have
pseudo-velocities but they do not move. The balls
``collide,'' and their pseudo-velocities change according to the usual laws of totally elastic collisions.

Consider a finite number of balls  arranged on a segment of the real line, with centers  one unit apart and their radii  all equal to $1/2$. 
The spacetime for the model is discrete, i.e.,
the velocities $\bv(x,t)$ are defined for $x=1,2,\dots, n$ and $t=0,1,2, \dots$, where $x$ is the position (i.e., number) of the $x$-th ball.
The evolution, i.e., pseudo-collisions of the balls and transformations of the velocities, is driven by an exogenous random process because the balls do not move and hence they cannot collide in the usual way.

First consider a simplified model in which pairs of adjacent balls are chosen randomly, i.e., in a uniform way, and form an i.i.d. sequence. Every time a pair of adjacent balls is chosen, the velocities become ordered, i.e., if the chosen balls have labels $x$ and $x+1$ and the collision occurs at time $t$ then
\begin{align}\label{a27.1}
\bv(x,t+1) &= \min (\bv(x,t), \bv(x+1, t)), \\
\bv(x+1,t+1) &= \max (\bv(x,t), \bv(x+1, t)).\label{a27.2}
\end{align}
This agrees with the usual transformation rule for velocities of moving balls of equal masses undergoing totally elastic collisions.
The evolution described above  has been studied under the names of
``random sorting networks'' in \cite{AHRV},
``oriented swap process'' in \cite{AHR} and ``TASEP speed process'' in \cite{AAV}. It has been also called ``colored TASEP.''
For a related model featuring confined (but moving) balls, see \cite{Gaspard_2,Gaspard_2008_1,Gaspard_2008}.

In the  model studied in \cite{KBJHSS} the evolution of velocities of pinned balls consists of a sequence of two-step transformations. 
In the first step,  energy is redistributed. In the second step   a pair of velocities is reordered.
It is argued in \cite{KBJHSS} that
the joint distribution of $\{\bv(x,t), 1\leq x\leq n, t\geq 0\}$ converges after appropriate rescaling to
\begin{align*}
\bv(x,t) = \mu(x,t) + \sigma(x,t) W(x,t),
\end{align*}
when the number $n$ of balls goes to infinity. Here
$W(x,t)$ is spacetime white noise
and $\mu(x,t)$ and $\sigma(x,t)$ are deterministic functions satisfying \eqref{n10.1}-\eqref{n10.3} below.
See \cite{KBJHSS}, especially Remark 4.4, for the discussion of how \eqref{n10.1}-\eqref{n10.3} arise in the scaling limit.

The ``pinned balls'' model is a variant of the ``hot rods'' model introduced in \cite{DF,BDS}. In that model, the balls were allowed to move.
For recent results on the hot rods model and a review of the related literature, see \cite{FerEt,FerOl}.
Our partial differential equations  for $\mu$ and $\sigma$ are essentially the same as the equations (2.1) and (2.2)  for the local density  and the local current in \cite{Bert}. 

\subsection{Organization of the paper}

Our main theorems are stated in Section \ref{sec:main}. 
The same section contains also a review of properties of solutions proved later in the paper, an outline of the main ideas of proofs, and a brief review of related results in the PDE theory.   Sections \ref{sec:char}-\ref{sec:ann} review properties of solutions at the heuristic level, to help the reader follow rather technical proofs. The three sections are devoted to characteristics, freezing and thawing boundaries, and annihilation of sublevel  and superlevel sets of solutions. Section \ref{sec:cons} contains the construction of the solutions for ``nice'' initial conditions. This is generalized to arbitrary continuous initial conditions in Section \ref{sec:gen}.
Uniqueness of solutions is proved, under extra assumptions, in Section \ref{sec:uniq}. Section \ref{sec:prop} presents several qualitative and quantitative properties of the solutions. Finally, Section \ref{sec:exam} is devoted to examples.

\section{Main results}\label{sec:main}

The model discussed in Section \ref{sec:pinn} is associated with coupled nonlinear partial differential equations
for $\mu(x,t)$ and $\sigma(x,t)$ 
with $(x,t)\in [a_1,a_2]\times [0,\infty)$ for some $-\infty<a_1<a_2<\infty$. However, from the mathematical point of view, it is equally natural to state and solve the problem
with the space variable taking values in the whole real
line. From now on, we will take $I = \R$ or $I=[a_1,a_2]$
and look for solutions to the following constrained
boundary value problem  in $ I\times [0,\infty)$. 
\begin{align}\label{n10.1}
\sigma_t(x,t) & = - \mu_x(x,t) \bone_{ \sigma(x,t) >0},\\
\mu_t(x,t) &= \label{n10.2}
 - \sigma_x(x,t) \bone_{ \sigma(x,t) >0},\\
\sigma(x,t) & \geq 0 ,\qquad \text{  for  }  x\in I,\  t\geq 0,\label{n10.7}\\
\sigma(a_1,t) &= \sigma(a_2,t) = 0, \qquad \text{  for  } t\geq 0,
\text{  if  } I=[a_1,a_2].\label{n10.3}
\end{align}

\begin{remark}\label{j25.1}
  Suppose that $\sigma(x,0)=0$ for all $x$ but $\mu(x,0)$
  is totally arbitrary (for example, discontinuous or even
  non-measurable). If we let $\sigma(x,t)=0$ and
  $\mu(x,t)=\mu(x,0)$ for all $x$ and $t\geq0$ then thus
  constructed $\sigma(x,t)$ and $\mu(x,t)$ solve
  \eqref{n10.1}-\eqref{n10.3} in $ I\times
  [0,\infty)$. This is a large family of solutions
  that is inconsistent with the pinned balls model.

  For this reason, we elaborate further on the meaning of
  the constraint \eqref{n10.7}, insisting that solutions
  freeze only when they must do so to avoid violating
  the constraint \eqref{n10.7}, and they thaw as soon as they can do so
  without violating \eqref{n10.7}. Here, ``freezing''
  means transitioning from ``liquid'', where \(\sigma\)
  and \(\mu\) satisfy
 
\begin{align}\label{eq:1}
  \sigma_t(x,t) & = - \mu_x(x,t) ,\\
  \label{eq:2}
\mu_t(x,t) &= - \sigma_x(x,t),
\end{align}
 to ``frozen'', where  \(\sigma\)
  and \(\mu\) satisfy
\begin{align*}
\sigma_t(x,t) & = 0,\\
\mu_t(x,t) &= 0,
\end{align*}
\noindent
and ``thawing'' means transitioning from frozen to liquid.

It is easy to check that for the initial data
$\sigma(x,0)=0$ and $\mu(x,0)$ strictly decreasing, as described
above,  \(\sigma\)
  and \(\mu\) may satisfy \eqref{eq:1}-\eqref{eq:2} for \(t>0\)
  without violalting \eqref{n10.7}. Because
  \(-\mu_{x}(x,0)\) is positive, solutions with
  \(\sigma(x,0) =0\)
  instantly thaw, i.e. \(\sigma(x,t)>0\) for all \(t>0\).
  A more precise statement is the following:

  If for some $s\geq 0$ and $x_1< x_2$, there
  exist functions $\mu(x,t)$ and $\sigma(x,t)$ which are
  continuous in the triangle
  $\{(x,t): t\geq s, x_1 +(t-s) < x < x_2-(t-s)\}$,
  satisfy \eqref{n10.1}-\eqref{n10.3} and $\sigma(x,t) >0$
  in the interior of the triangle then we will consider
  only solutions to \eqref{n10.1}-\eqref{n10.3} satisfying
  these conditions.

It will become clear later in the paper that the generic situation covered by the above condition is when 
$x\to \mu(x,s)$ is strictly decreasing and $\sigma(x,s)=0$ for $x\in(x_1,x_2)$.

The above restriction is crucial for the uniqueness of solutions.

The extra condition agrees with the motivating ``pinned balls'' model because the evolution of pseudo-velocities cannot reach the stationary regime until the pseudo-velocities are (approximately)  increasing.
\end{remark}

It is natural to diagonalize equations
\eqref{n10.1}-\eqref{n10.3} by introducing

\begin{align}\label{a22.2}
v(x,t) &= \frac 1 2 (\mu(x,t) + \sigma(x,t)),\\
w(x,t) &= \frac 1 2 (\mu(x,t) - \sigma(x,t)).\label{a22.3}
\end{align}
Then \eqref{n10.1}-\eqref{n10.3} are equivalent to
\begin{align}\label{n10.4}
v_t(x,t) &= - v_x(x,t)
\bone_{v(x,t)-w(x,t) >0}, \\
w_t(x,t) &= \label{n10.5}
 w_x(x,t)
\bone_{v(x,t)-w(x,t) >0},\\
v(x,t)&\geq w(x,t) ,\qquad \text{  for  }  x\in I,\  t\geq 0,\label{n10.8}\\
 v(x,t)&=w(x,t), \qquad \text{  for  } x=a_1,a_2,\  t\geq 0,
\text{  if  } I=[a_1,a_2].\label{n10.6}
\end{align}

\begin{remark}\label{m28.1}
(i)
We will interpret \eqref{n10.4}-\eqref{n10.5} in the following weak sense.
We rewrite the equations in the form of directional derivatives,
\begin{align*}
(\prt_t+\prt_x)v(x,t)=0,
\qquad
(\prt_t-\prt_x)w(x,t)=0.
\end{align*}
We consider the first  condition to be satisfied at $(x,t)$ if $v(x+\delta, t+\delta) = v(x,t)$ for some $\eps>0$ and all $\delta\in(-\eps,\eps)$. A similar remark pertains to the second condition. 
More precisely, if for an open rectangle $F$ in the space-time, $v(x,t)> w(x,t)$ for $(x,t)\in F$ then all characteristics of $v$ have slope 1  and all characteristics of $w$ have slope $-1$ in $F$, i.e., $v(x+\delta, t+\delta) = v(x,t)$  for all $(x,t),(x+\delta, t+\delta) \in F$
and $w(x-\delta, t+\delta) = w(x,t)$ for all $(x,t),(x-\delta, t+\delta)\in F$.
If for an open rectangle $F$ in the space-time, $v(x,t)= w(x,t)$ for $(x,t)\in F$ then all characteristics of $v$ and $w$ have slope 0 in $F$, i.e., $v(x, t+\delta) = v(x,t)$ and $w(x, t+\delta) = w(x,t)$ for all $(x,t),(x, t+\delta)\in F$. 

The derivatives $\prt _t$ and $\prt _x$ need not exist for \eqref{n10.4}-\eqref{n10.5} to be satisfied. From now on, whenever we assert existence of solutions to \eqref{n10.4}-\eqref{n10.5}, it will be in the sense of this remark.

(ii) We translate the condition stated in Remark \ref{j25.1} into the present context.

\begin{align}\label{a5.4}
&\text{
If for some  $s\geq 0$ and $x_1< x_2$ there exist functions $\tilde{v}(x,t)$ and $\tilde{w}(x,t)$ }\\
&\text{
which are 
continuous in the triangle } \notag \\
&\quad U:=\{(x,t): t\geq s, x_1 +(t-s) < x < x_2-(t-s)\}, \notag \\
 &\text{ satisfy \eqref{n10.4}-\eqref{n10.6} with
   $\tilde{v}(x,t) >\tilde{w}(x,t)$ in the interior of
   $U$, and}  \notag \\
  &\quad \tilde{v}(x,s) =  v(x,s)\ \text{  and  }\ \tilde{w}(x,s) = w(x,s)
  \ \text{  for  } x_1 < x < x_2,
    \notag \\
 &\text{ 
then  $v(x,t)=\tilde{v}(x,t)$ and $w(x,t)=\tilde{w}(x,t)$  in the interior of $U$.  } \notag 
\end{align}

The generic situation covered by \eqref{a5.4} is when 
$x\to v(x,s)$ is strictly decreasing and $v(x,s)=w(x,s)$ for $x\in(x_1,x_2)$.

(iii) Condition \eqref{n10.6} can be interpreted as  reflection of a wave at the boundary of a container.
\end{remark}

Our main results assert existence and uniqueness of solutions to \eqref{n10.4}-\eqref{a5.4}. We need different assumptions for the two claims. We start with the existence result.

\begin{theorem}\label{m28.4}
Assume that the initial conditions $v(x,0)$ and $w(x,0)$ are continuous on $I$ and $v(x,0)\geq w(x,0)$ for all $x\in I$. If $I=[a_1,a_2]$ then assume that $v(a_1,0)=w(a_1,0)$ and $v(a_2,0)=w(a_2,0)$.
Then there exist jointly continuous functions $v(x,t)$ and $w(x,t)$ with $x\in I$, $t\geq 0$, such that \eqref{n10.8}-\eqref{a5.4} are satisfied, and for every open set $D\subset I \times(0,\infty)$, if $v(x,t)> w(x,t)$ for all $(x,t) \in D$ or $v(x,t)= w(x,t)$ for all $(x,t) \in D$ 
then \eqref{n10.4}-\eqref{n10.5} are satisfied in $D$.
\end{theorem}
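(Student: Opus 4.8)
The plan is to reduce the theorem to the construction for ``nice'' initial data carried out in Section~\ref{sec:cons}, via an approximation and compactness argument, and then to verify the weak formulation of Remark~\ref{m28.1} together with the minimality condition \eqref{a5.4} for the resulting limit. Throughout I would use that the problem has finite propagation speed --- the solution on a compact subset of $I\times[0,\infty)$ depends only on the initial data on a bounded subinterval of $I$ --- so that local uniform convergence of the data is all that is needed, even when $I=\R$ and the data are unbounded.

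First I would approximate: choose ``nice'' initial data $(v_0^n,w_0^n)$ --- for instance the piecewise linear interpolations of $(v(\cdot,0),w(\cdot,0))$ on meshes that include $a_1,a_2$ in the bounded case --- with $(v_0^n,w_0^n)\to(v(\cdot,0),w(\cdot,0))$ locally uniformly, $v_0^n\geq w_0^n$ (interpolation preserves this since the difference is nonnegative) and $v_0^n=w_0^n$ at $a_1,a_2$ when $I=[a_1,a_2]$. Section~\ref{sec:cons} supplies solutions $(v_n,w_n)$ together with their characteristic structure: values are transported along characteristics of slopes in $\{-1,0,1\}$, characteristics of one family do not cross, and at the lateral boundary the two families are interchanged by \eqref{n10.6}. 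From this I would read off uniform estimates --- local uniform boundedness (ranges of $v_n,w_n$ over a compact set lie in the union of the ranges of $v_0^n,w_0^n$ over a bounded interval), a spatial modulus of continuity for $v_n(\cdot,t),w_n(\cdot,t)$ inherited from that of $(v_0^n,w_0^n)$ by pulling short intervals back along the non-crossing characteristics, and a temporal modulus since a characteristic moves horizontally by at most $|t-s|$ over a time span $|t-s|$. As the moduli of $(v_0^n,w_0^n)$ converge, all bounds are uniform in $n$, and Arzel\`a--Ascoli with a diagonal argument over an exhaustion of $I\times[0,\infty)$ gives a subsequence $(v_n,w_n)\to(v,w)$ locally uniformly, with $(v,w)$ jointly continuous.

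I would then pass to the limit in the required properties. The constraint \eqref{n10.8} and the boundary condition \eqref{n10.6} survive passage to the limit directly. For the weak equations of Remark~\ref{m28.1} on an open $D$ with $v>w$: on each compact $K\subset D$ one has $v_n-w_n\geq\tfrac12\min_K(v-w)>0$ for all large $n$, so on $K$ the characteristics of $v_n$ have slope $1$ and those of $w_n$ slope $-1$, i.e.\ $v_n(x+\delta,t+\delta)=v_n(x,t)$ and $w_n(x-\delta,t+\delta)=w_n(x,t)$ for admissible $(x,t),\delta$; letting $n\to\infty$ yields the same identities for $(v,w)$. On an open $D$ with $v=w$ one must also establish the frozen weak formulation --- that $v$ and $w$ are constant in $t$ on small rectangles --- and this is more delicate, because $v_n-w_n$ need only tend to $0$, so each $v_n$-characteristic is a concatenation of slope-$1$ and slope-$0$ pieces whose locations are not a priori controlled; I would argue it using the limiting characteristics and the non-crossing property rather than a single pointwise identity.

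The remaining, and I expect hardest, step is the minimality condition \eqref{a5.4}. Given $s$, $x_1<x_2$ and $(\tilde v,\tilde w)$ as in \eqref{a5.4}, the hypothesis $\tilde v>\tilde w$ in the interior of the triangle $U$ forces, through the weak equations, $\tilde v(x,t)=v(x-(t-s),s)$ and $\tilde w(x,t)=w(x+(t-s),s)$, so $(\tilde v,\tilde w)$ is entirely determined by $v(\cdot,s),w(\cdot,s)$ on $(x_1,x_2)$; the task is to show the constructed $(v,w)$ coincides with this explicit liquid evolution throughout $U$. For nice data this is precisely the defining feature of the Section~\ref{sec:cons} construction (the solution is declared liquid wherever a consistent liquid continuation exists), so the real issue is to transfer the property to the limit. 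I would attempt this either by exhibiting shrinking triangles $U_n\uparrow U$ on which $(v_n,w_n)$ is liquid --- hence given by the analogous explicit formula with the data of $(v_0^n,w_0^n)$ --- and passing to the limit, or by invoking the local uniqueness estimates of Section~\ref{sec:uniq}: on $\{v>w\}$ the system reduces to the linear system \eqref{eq:1}--\eqref{eq:2}, whose solution with prescribed data on the base of the triangle is unique, so it suffices to exclude the appearance of a spurious frozen region inside $U$ in the limit, which would contradict the ``thaw whenever possible'' property inherited from the approximants. Making that exclusion rigorous is where I expect the main difficulty to lie.
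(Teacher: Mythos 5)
Your overall architecture matches the paper's: construct solutions for ``nice'' data (Section \ref{sec:cons}), approximate general continuous data by nice data, and pass to the limit. But the mechanism you use for the limit is different from the paper's, and the difference matters. The paper does not use Arzel\`a--Ascoli at all. Instead it represents the nice-data solutions by the explicit sublevel/superlevel-set formulas \eqref{j17.1a}--\eqref{j17.8a} (Lemma \ref{a22.6}) and deduces from them that the data-to-solution map is $1$-Lipschitz in the sup norm (Theorem \ref{j22.1}): $\|v_1(\cdot,0)-v(\cdot,0)\|_\infty\le\eps$ and $\|w_1(\cdot,0)-w(\cdot,0)\|_\infty\le\eps$ imply the same bound for all $t$. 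This gives uniform convergence of the \emph{whole} approximating sequence with no equicontinuity input. Your route instead needs a uniform modulus of continuity for the approximate solutions. The paper's Lemma \ref{m26.1}(ii)--(iii) only establishes preservation of a \emph{Lipschitz} constant, and your piecewise-linear interpolants of non-Lipschitz data have Lipschitz constants blowing up with the mesh; so you would have to redo that lemma with a general modulus, including the non-obvious case where the comparison of two $v$-values must be routed through $w$-characteristics across a freezing curve (so the modulus of $v(\cdot,t)$ is controlled by the worse of the two initial moduli). That is fixable but is real work you have not done, and it buys you only a subsequential limit rather than a canonical one.

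The genuine gap is exactly where you say you expect it: the frozen-region weak formulation and condition \eqref{a5.4} for the limit. You propose two strategies but complete neither. The paper closes this in Theorem \ref{j22.1}(iii) with a concrete quantitative argument that you would need in some form. For a region where the limit satisfies $v=w$ on a triangle $U$ with base $[x_1,x_2]$: if the common boundary value is non-decreasing on the base, one approximates by strictly increasing data, for which the characteristics in $U$ have slope $0$, and concludes $v,w$ are constant in $t$ there; if it is \emph{not} non-decreasing, one picks $x_3<x_4$ with $v(x_3,0)>v(x_4,0)+\eps$ and a gap $\eps_1>0$ with $w(x,0)<v(x_3,0)-\eps_1$ on $[x_3+(x_4-x_3)/4,x_4]$, arranges the approximants so the $v_k$-characteristic from $(x_3,0)$ stays liquid with slope $1$ for a definite time while $w_k$ along that line stays at least $\eps_1$ below --- crucially with $\eps_1$ independent of $k$ --- and passes to the limit to contradict $v=w$ in $U$. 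This uniform-in-$k$ lower bound is what makes ``exclude a spurious frozen region in the limit'' rigorous, and it is the piece your sketch is missing. Without it, neither the frozen-case identity $v(x,t+\delta)=v(x,t)$ nor \eqref{a5.4} is established for the limit, so as written the proposal does not yet prove the theorem.
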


\begin{remark}
(i)
Solutions to \eqref{n10.4}-\eqref{a5.4} need not be differentiable in the classical sense even if the initial conditions are smooth. For generic smooth initial conditions, $v$ and $w$ are smooth in most of the space-time but not everywhere. 
Specifically, the solutions need not be differentiable on freezing and thawing boundaries---see Section \ref{sec:char}.

(ii)
Solutions to \eqref{n10.4}-\eqref{a5.4} can be defined by explicit formulas \eqref{j17.1a}-\eqref{j17.8a}. All we need to assume about the initial conditions $v(x,0)$ and $w(x,0)$ to apply these formulas is measurability. 
While completely arbitrary measurable initial conditions might not be interesting, it may be interesting to investigate solutions to \eqref{n10.4}-\eqref{a5.4} with initial conditions which have isolated jumps. We leave this case as a potential future project. 

(iii) If $v(x,t)> w(x,t)$ for all $(x,t) $ in an open set $D\subset I \times(0,\infty)$
then we will call $D$ a ``liquid zone.''  If $v(x,t)= w(x,t)$ for all $(x,t) \in D$ then we will call $D$ a ``frozen zone.''
\end{remark}

As we announced, our main uniqueness result needs assumptions stronger than the existence theorem. More accurately, we will prove uniqueness of solutions within a restricted class of functions. Before we state the uniqueness result, we will state some definitions.

    For a first order quasilinear intial value problem
    \begin{align}\label{eq:3}
      f_{t} + b(x,t,f)f_{x} &= c(x,t,f),\\
      f(x,0) &= f_{0}(x),\notag
    \end{align}
a characteristic curve \((\chi(t,x_{0},f_{0}(x_{0}),t)\) is defined as the solutions to
    the system of ODE's
    \begin{align}\label{eq:4}
      \frac{d\chi}{dt} &= b(\chi,t,z),
      \\
      \frac{dz}{dt} &= c(\chi,t,z),\label{eq:5}
      \end{align}
      with initial conditions
      \begin{align*}
      \chi(x_{0},0) = x_{0} \ \text{ and }\  z(x_{0},0) = f(x_{0},0).
    \end{align*}

  If every \((x,t)\) lies on a unique characteristic, i.e
  \((x,t) = (\chi(x_{0},t),t)\) for a unique \(x_{0}\), then we
  may define \(f(x,t) = z(x_{0},t)\) and  \(f\) will
  satisfy (\ref{eq:3}). In this way the initial value
  problem for the PDE is reduced to an initial value
  problem for a system of ODE's.

  For our system, (\ref{n10.4})-~(\ref{n10.5}), the \(v\)-characteristics, \((\chi_{v}(x_{0},t),t)\) are piecewise
  linear curves with \(d\chi/dt\) equal to zero in
  the frozen region and one in the liquid region. As
  \(c(x,t,z)=0\), \(v\) is constant along its
  characteristics.  Similarly, \(d\chi/dt\) is
  equal to zero or minus one for the
  \(w\)-characteristics, and \(w\) is constant along its
  characteristics.

  Our formal definition requires that a characteristic
  satisfy (\ref{eq:4}-\ref{eq:5} in the open liquid and frozen regions,
  and be continuous across the transitions.

\begin{definition}\label{m26.3} 

Suppose that $v(x,t)$ and $w(x,t)$ are defined for $x\in I$ and $s\le t$.\\

(i)
If $I = \R$ then
we will call a function
$\{\chi_v(u), 0\leq u \leq t\}$ with values in $I$
a characteristic of $v$ if for some $x\in I$, $\chi_v(0)=x$, 
$v(\chi_v(u),u) =v(x,0)$ for all $u\in[0,t]$, $\chi_v$ is
continuous, and \(\chi_{v}(u)\) satisfies (\ref{eq:3})
with $c\equiv 0$,  \(b=1\) in the interior of the liquid region \(\{(x,t):v(x,t)>w(x,t)\}\), and
 \(b=0\) in the interior of the frozen
region  \(\{(x,t):v(x,t)=w(x,t)\}\).\\

If $I = [a_1,a_2]$ then
we will call a function
$\{\chi_v(u), s\leq u \leq t\}$ with values in $I$
a  characteristic of $v$ if for some $x\in I$, $\chi_v(s)=x$, 
$v(\chi_v(u),u) =v(x,s)$ for all $u\in[s,t]$,  $\chi_v$ is continuous,
 $s=0$ or $\chi_v(s)=a_1$,
  and \(\chi_{v}(u)\) satisfies (\ref{eq:3})
with $c\equiv 0$,  \(b=1\) in the interior of the liquid region, and
 \(b=0\) in the interior of the frozen
region.\\

Similarly, if
 $I = \R$ then
we will call a function
$\{\chi_w(u), 0\leq u \leq t\}$ with values in $I$
a characteristic of $w$ if for some $x\in I$, $\chi_w(0)=x$, 
$v(\chi_w(u),u) =w(x,0)$ for all $u\in[0,t]$, $\chi_w$ is
continuous, and \(\chi_{w}(u)\) satisfies (\ref{eq:3})
with $c\equiv 0$,  \(b=1\) in the interior of the liquid region, and
 \(b=0\) in the interior of the frozen
region.\\

If $I = [a_1,a_2]$ then
we will call a function
$\{\chi_w(u), s\leq u \leq t\}$ with values in $I$
a  characteristic of $w$ if for some $x\in I$, $\chi_w(s)=x$, 
$v(\chi_w(u),u) =w(x,s)$ for all $u\in[s,t]$,  $\chi_w$ is continuous,
 $s=0$ or $\chi_w(s)=a_2$,
  and \(\chi_{w}(u)\) satisfies (\ref{eq:3})
with $c\equiv 0$,  \(b=-1\) in the interior of the liquid region, and
 \(b=0\) in the interior of the frozen
region.\\

Suppose that $\{\chi_v(u), s\leq u \leq t\}$ is a characteristic and $x_1=\chi_v(u_1)$ for some $u_1\in[s,t]$. Then we will call  $\{\chi_v(u), s\leq u \leq u_1\}$ a backward characteristic (emanating from $(x_1,u_1)$) and $\{\chi_v(u), u_1\leq u \leq t\}$ a forward characteristic (emanating from $(x_1,u_1)$). Similar terms will be applied to $w$-characteristics.

By abuse of
terminology, we will use the term  characteristic in
reference to $\chi_v$ and $\chi_w$ or the graphs of
$u\to (\chi_v(u),u)$ and $u\to (\chi_w(u),u)$.\\

(ii) We will call a (part of) characteristic $\{\chi_v(u), s\leq u \leq t\}$  subsonic if 
\begin{align}\label{m28.2}
0\leq \chi_v(u_2) - \chi_v(u_1) \leq u_2-u_1
\qquad \text{ for all } s\leq u_1 \leq u_2 \leq t.
\end{align}

We will call a characteristic $\{\chi_w(u), s\leq u \leq t\}$  subsonic if 
\begin{align}\label{m28.3}
u_1-u_2\leq \chi_w(u_2) - \chi_w(u_1) \leq 0
\qquad \text{ for all } s\leq u_1 \leq u_2 \leq t.
\end{align}

Recall that $x$ is called a local maximum of $f$ if for some $\delta>0$, we have $f(x) = \sup_{x-\delta\leq y \leq x+\delta} f(y)$. Similarly, $x$ is called a local minimum of $f$ if for some $\delta>0$, $f(x) = \inf_{x-\delta\leq y \leq x+\delta} f(y)$.
A continuous function is monotone on all closed intervals
that contain no local extrema in the interior. If all
local extrema are isolated, the union of these closed
interverals is all of $I$.

\end{definition}
 
\begin{theorem}\label{m28.5}
  Assume that the initial conditions $v(x,0)$ and $w(x,0)$
  are continuous on $I$ and $v(x,0)\geq w(x,0)$ for all
  $x\in I$. If $I=[a_1,a_2]$ then assume that
  $v(a_1,0)=w(a_1,0)$ and $v(a_2,0)=w(a_2,0)$.  Suppose
  that the total number of local extrema of $v(x,0)$ and
  $w(x,0)$ is finite on every finite interval.
  Then there exist unique jointly continuous functions
  $v(x,t)$ and $w(x,t)$ with $x\in I$, $t\geq 0$, such
  that \eqref{n10.8}-\eqref{a5.4} are satisfied, subsonic
  {backward} characteristics of $v$ and $w$ exist for all
  $(x,t) \in I\times [0,\infty)$, and for every open set
  $D\subset I \times(0,\infty)$, if $v(x,t)> w(x,t)$ for
  all $(x,t) \in D$ or $v(x,t)= w(x,t)$ for all
  $(x,t) \in D$ then \eqref{n10.4}-\eqref{n10.5} are
  satisfied in $D$.
\end{theorem}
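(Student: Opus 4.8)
The existence assertion is little more than Theorem~\ref{m28.4}: the solution produced by the construction of Sections~\ref{sec:cons}--\ref{sec:gen} is jointly continuous, satisfies \eqref{n10.8}--\eqref{a5.4}, and solves \eqref{n10.4}--\eqref{n10.5} on every open set on which $v>w$ identically or $v=w$ identically. Under the extra hypothesis that $v(\cdot,0)$ and $w(\cdot,0)$ have only finitely many local extrema on each finite interval one must in addition check that this solution has subsonic backward characteristics through every $(x,t)$; this is read off from the construction, in which the $v$-characteristics are piecewise linear with slopes in $\{0,1\}$ and the $w$-characteristics piecewise linear with slopes in $\{0,-1\}$, so \eqref{m28.2}--\eqref{m28.3} hold, the finiteness of the extrema ruling out any accumulation of freezing or thawing boundaries that might trap or spawn a characteristic in bounded time. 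Thus the real content is uniqueness: we fix two pairs $(v_1,w_1)$ and $(v_2,w_2)$ satisfying all the asserted conclusions and set out to show $v_1\equiv v_2$ and $w_1\equiv w_2$.

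First I would record finite propagation speed: since a $v$-characteristic has slope in $[0,1]$ and a $w$-characteristic slope in $[-1,0]$, tracing the (subsonic) backward characteristics from $(x_0,t_0)$ shows that $v_i(x_0,t_0)$ and $w_i(x_0,t_0)$ are recovered from the initial data on $[x_0-t_0,x_0+t_0]$ alone --- in the case $I=[a_1,a_2]$ after at most finitely many reflections governed by \eqref{n10.6}, since each reflection consumes a time at least $a_2-a_1$. This lets us localize, so we may assume $I$ is compact and that $v(\cdot,0),w(\cdot,0)$ together have finitely many local extrema. Now run a continuation-in-time argument: let
\[
T^* := \sup\{\, T\ge 0 :\ v_1=v_2 \text{ and } w_1=w_2 \text{ on } I\times[0,T]\,\},
\]
which is positive by continuity of the solutions and equality of the initial data. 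The goal is to show $T^*=\infty$ by contradiction.

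The decisive reduction is the following claim: \emph{if the open liquid zones $L_i:=\{v_i>w_i\}$ (equivalently, the complementary frozen zones) agree for $i=1,2$ on a set $I\times[0,T]$, then $v_1=v_2$ and $w_1=w_2$ there.} Indeed, on the common liquid region each $v_i$ is, in the weak sense of Remark~\ref{m28.1}, transported along lines of slope $+1$ and each $w_i$ along lines of slope $-1$, while on the common frozen region all four functions are constant in $t$; hence the characteristic ODEs \eqref{eq:4}--\eqref{eq:5} read $\chi_v'\in\{0,1\}$, $\chi_w'\in\{0,-1\}$ with $c\equiv 0$, the coefficient being dictated solely by the common liquid/frozen partition. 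So a subsonic backward $v$-characteristic $\chi_v$ of $v_1$ from $(x,t)$ is also one of $v_2$; as $v_2$ is constant along $\chi_v$ and $\chi_v$ terminates at the initial line (after finitely many reflections when $I=[a_1,a_2]$, where \eqref{n10.6} applies), $v_1(x,t)=v_2(x,t)$, and symmetrically for $w$. Consequently it suffices to prove that $L_1$ and $L_2$ agree on $I\times[0,T^*+\eps]$ for some $\eps>0$, which together with the above contradicts the definition of $T^*$.

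That last step is the main obstacle. The plan is to show that just above the level $t=T^*$ the liquid zone --- hence, by the claim, the solution --- is determined by the data $(v_i(\cdot,T^*),w_i(\cdot,T^*))$, which coincide for $i=1,2$ by the definition of $T^*$; this is exactly the content of the structural results of Sections~\ref{sec:char}--\ref{sec:ann}. Treating that data as initial data at time $T^*$, one partitions $I$ into the maximal intervals on which $v-w>0$ (these remain liquid for a short time by continuity) and the maximal intervals on which $v=w$; on the latter, condition \eqref{a5.4} forces the solution to thaw precisely on those open subintervals on which a continuous liquid solution exists --- generically, on the subintervals where $x\mapsto v(x,T^*)$ is strictly decreasing --- and to stay frozen otherwise. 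This pins the liquid zone, identically for both solutions, in a one-sided neighbourhood of $I\times\{T^*\}$. The delicate point I expect to be the crux is the bookkeeping that keeps ``maximal intervals'' finite in number and stable in time: one needs the annihilation/monotonicity statements of Section~\ref{sec:ann} to ensure that freezing and thawing keep the number of local extrema of $x\mapsto v(x,t)$ and $x\mapsto w(x,t)$ finite (and create no new ones), so that only finitely many freezing and thawing boundaries are ever active and none accumulate; inside each liquid sub-zone one uses the explicit transport $v_i(x,t)=v_0(x-t)$, $w_i(x,t)=w_0(x+t)$ to locate precisely where $v_i-w_i$ first vanishes (a freezing boundary), and inside each frozen sub-zone one uses that the profile is constant in $t$, together with \eqref{a5.4}, to locate precisely where a thawing boundary opens. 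Assembling these pieces consistently yields $T^*=\infty$, and hence uniqueness.
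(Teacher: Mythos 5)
Your overall architecture (localize by finite propagation speed, continuation in time via $T^*$, and the reduction ``if the liquid/frozen partitions agree then the values agree'') matches the paper's strategy, which likewise reduces uniqueness to the triangles of Lemma~\ref{j6.5} and re-runs the local analysis at each later time. The existence half and the ``decisive reduction'' are fine.

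The genuine gap is in the step you yourself flag as the crux: showing that the liquid zone just above $t=T^*$ is \emph{forced} for an arbitrary competitor solution. You write that on intervals where $v(\cdot,T^*)=w(\cdot,T^*)$, condition \eqref{a5.4} makes the solution thaw exactly where a liquid continuation exists ``and stay frozen otherwise,'' but \eqref{a5.4} is a one-directional statement: it only identifies the solution \emph{where} a liquid continuation exists; it says nothing about the complementary set. Proving ``stays frozen otherwise'' is precisely where the subsonic-backward-characteristic hypothesis must be used, and your proposal never invokes it at this stage (you only use it after the zones are assumed equal). The paper's argument is: from a point above the freezing curve, a subsonic backward $v$-characteristic must cross the freezing curve at or to the left of the point's abscissa and a subsonic backward $w$-characteristic at or to the right, which together with monotonicity of the data along the freezing curve gives $v\le w$, hence $v=w$ by \eqref{n10.8}. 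Without this, uniqueness actually fails --- Remark~\ref{a28.2} exhibits time-reversed ``solutions'' that satisfy \eqref{n10.4}--\eqref{n10.8} but freeze in the wrong places.

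A second, harder omission of the same kind concerns thawing boundaries. Condition \eqref{a5.4} forces liquidity only on the characteristic triangle sitting over the interval where the frozen profile is strictly decreasing (the set $\wt U$ in the paper's notation); it does not control the wedge between that triangle and the thawing curve (the set $U_r^+\setminus\wt U$). Showing that this wedge is liquid for every competitor --- equivalently, that thawing happens no later than along the curve $\calT$ of Lemma~\ref{m22.1}(ii) --- is the most technical part of the paper's proof of Theorem~\ref{m28.5} (the case analysis (a)/(b) built around the infimum $z_1$ and repeated contradictions extracted from subsonic backward characteristics). Your proposal does not identify this region as an issue, so the assertion that the data at time $T^*$ ``pins the liquid zone, identically for both solutions'' is unsupported exactly where the shock-type condition carries all the weight.
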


\subsection{Properties of  solutions}\label{j26.1}

In Section \ref{sec:prop}
we will state and prove the following properties of  solutions $v$ and $w$ to \eqref{n10.4}-\eqref{a5.4} under Assumption \ref{m29.2}.

(i) If $I=[a_1,a_2]$ then the system will freeze forever after $2(a_2-a_1)$ units of time.

(ii) If $I=\R$, the total variations of $v(\,\cdot\,,t)$
and $w(\,\cdot\,,t)$ cannot increase as time increases.

(iii) If $I=[a_1,a_2]$ then momentum and energy, expressed in terms of $\mu$ and $\sigma$, are conserved.

(iv) The difference of occupation measures for $v$ and $w$ is a conserved quantity.

(v) Assuming smooth initial conditions and generic
behavior at extremal points, the slopes  of freezing and thawing curves at their endpoints are universal.

(vi) The solutions depend in a monotone way on the initial conditions.

(vii) One can solve the equations with (some) terminal rather than initial conditions.

(viii) The solutions are not time-reversible for generic initial conditions.

(ix) The solutions are not differentiable along some curves even if the initial conditions are smooth.

(x) Freezing boundaries have slopes between $-1$ and 1. Thawing boundaries have slopes either less than $-1$ or greater than 1. (This is proved in Section \ref{sec:cons}.)

Several properties have obvious counterparts for $\sigma$ and $\mu$; these follow easily from those for $v$ and $w$.

\subsection{The main ideas of proofs}

We expect (and assume) that the speeds of characteristics
will not exceed the propagation speed for the linear
transport equation. Hence, if $y_1<y_2$, the solutions $v$
and $w$ in a triangle of the form
$\{(x,t): t\geq 0, x\in( y_1 + t , y_2 -t)\}$ can depend
only on the data at its base (at time $t=0$).  We first
find the solutions for initial conditions $v(x,0)$ and
$w(x,0)$ satisfying extra conditions, the chief of which
is that there are only finitely many local extrema on
every finite interval.  This allows
us to divide the space into subintervals where the initial
conditions have at most two intervals of strict
monotonicity. Then we can define $v$ and $w$ in the
corresponding space-time triangles in a fairly explicit
way.

To generalize our construction to arbitrary continuous
initial conditions, we express the solutions using totally
explicit formulas in terms of sublevel and superlevel sets
of $v(x,0)$ and $w(x,0)$. This allows us to prove that the
mapping from the initial conditions to solutions is
Lipschitz in appropriate norms.  {The initial data
  satisfying the extra conditions are dense among all
  continuous initial data that satisfy the constraint, so
  we define solutions for more general initial conditions
  as limits of the special ones.}

\subsection{Comparison to Conservation Laws and Shocks}

{ Our system of nonlinear transport equations is
  similar in many ways to nonlinear systems of conservation
  laws \cite{Lax1984,Serre,Smoller}. A key difference is
  that the appearance of shocks---curves across which
  solutions jump---which is unavoidable for genuinely
  nonlinear
  conservation laws, does not occur for our system.\\

A nonlinear system of  conservation laws has the form
\begin{align*}
  u_{t}=  (f(u))_{x} =  A(u)u_{x}
\end{align*}
with \(A(u)= Df(u)\), the Jacobian of \(f\).
\begin{enumerate}
\item For general initial data, solutions are not
  continuous for all time, no matter how smooth the
  initial data. Typically, characteristics which start at
  points where \(u\) has different values intersect,
  forcing the function they would define to be
  discontinuous or multi-valued.
\item Weak (discontinuous) solutions exist for all initial
  data (e.g. \(L^{\infty}\)), but they are
  not unique.
\item Addition of a ``shock criterion'', restores
  uniqueness. For a scalar conservation law, one version of
  the shock criterion is that ``forward characteristics
  impinge on the shock curve.'' An alternative equivalent statement,
  which has a more direct analogy in  our system, is
  that ``every point \((x,t)\) has a backward characteristic''.
\end{enumerate}
\subsubsection{Transport equations with freezing}

Equations \eqref{n10.4}-\eqref{n10.5} can be written in the form

  \begin{align*}
    \begin{pmatrix}
      v
      \\
      w
    \end{pmatrix}_{t} =
    \begin{pmatrix}
      \bone_{(v>w)}& 0
      \\
      0&-\bone_{(v>w)}
    \end{pmatrix}\;
    \begin{pmatrix}
      v
      \\
      w
    \end{pmatrix}_{x}
  \end{align*}
  
  \begin{enumerate}
\item The system in not a conservation law, but
  similar. It is a $2\times2$ nonlinear hyperbolic system.
\item It is a conservation law in each of the liquid and
  frozen regions, where \(A(v,w)\) is diagonal with
  constant eigenvalues  \(\pm 1\) in the liquid region and
 both equal to zero in the frozen region.
\item The initial data, and the solutions must both
  satisfy the constraint \(v(x,0)\ge w(x,0)\) for all
  \(x\).
\item For initial data with isolated extrema, the
  liquid regions, where the eigenvalues are \(\pm 1\), are
  separated from the frozen regions, where the
  eigenvalues are both zero, by continuous freezing and thawing 
  curves.
\item Solutions are continuous. There are no shocks, even
  though forward characteristics starting at different
  points may intersect. The freezing process prevents
  $v$-characteristics (or $w$-characteristics) with the different
  values from intersecting. Two $v$-characteristics with
  the same value may intersect, and the constraint forces
  those characteristic to end, i.e. the point where the
  two meet has no forward characteristic.
\item The solutions need not be unique until we specify
  the analog of shock conditions. One version is: ``Every
  point has at least one backward $v$-characteristic and
  one backward $w$-characteristic.''  This is automatic in
  the frozen and liquid regions, and on the freezing curves,
  but not on the thawing curves.
  
\end{enumerate}
}

\section{Phenomena: characteristics}\label{sec:char}

The proofs of our main results are rather technical. This and the next two sections provide an informal but accurate description of the phenomena associated with the solutions to the  system \eqref{n10.4}-\eqref{a5.4}  of PDEs. 
We hope that this heuristic discussion will help the reader follow the rigorous proofs.
This section will focus on the shape of characteristics.

In all figures, if time is one of the coordinates, the horizontal axis represents space and the vertical axis represents time.

According to Remark \ref{m28.1} (i), if $v>w$ in an open set (``liquid'' region) then within this region $v$-characteristics have to be straight line segments with slope 1 and 
$w$-characteristics have to be straight line segments with slope $-1$.
If $v=w$ in an open set (``frozen'' region) then within this region characteristics for both $v$ and $w$ have slope 0.
See Fig. \ref{fig20}.

\begin{center}
\begin{figure}
\includegraphics[width=0.5\linewidth]{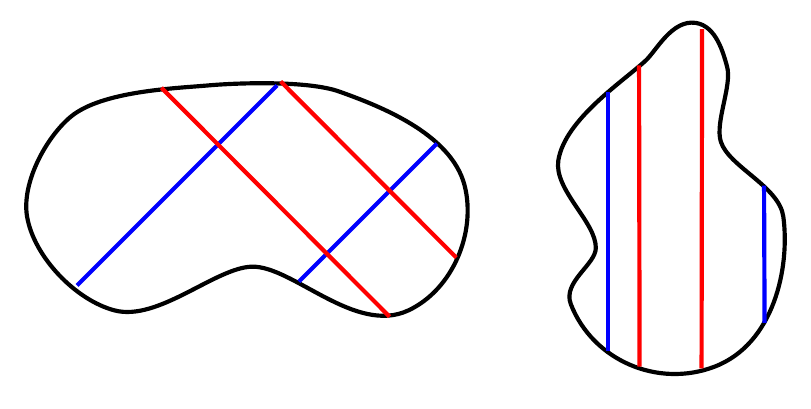}
\caption{A family of characteristics of $v$ (blue) and $w$ (red). 
The region within the curve on the left is ``liquid,'' i.e., $v>w$ inside the region. Slopes of $v$ characteristics (blue) are 1, and $-1$ for $w$ (red) in the liquid region. The region within the curve on the right is ``frozen,'' i.e., $v=w$ inside the region.
Characteristics of $v$ (blue) and $w$ (red) are vertical in the frozen region. Black lines are used to mark subsets of liquid and frozen regions. Their shapes are not typical for freezing and thawing boundaries.}
\label{fig20}
\end{figure}
\end{center}

The main, and perhaps only, interesting aspect of characteristics of $v$ and $w$ is their behavior at the boundary between liquid and frozen zones.
In the rest of this section, all characteristics represent the same single value for $v$ and $w$.
The two most common phenomena at the boundary are (i) transition from sloped to vertical characteristics at a meeting location in space-time (freezing point) of a characteristic of $v$ and a characteristic of $w$, and (ii) transition from overlapping vertical characteristics of $v$ and $w$ at a thawing point, where another sloped characteristic of either $v$ or $w$ meets the first two. In case (ii), if the sloped characteristic that hits the vertical ones represents $v$, a new sloped characteristic representing $w$ is emanating from the meeting location, and vice versa. See the seven lowest transition points  in Fig. \ref{fig15}.

\begin{center}
\begin{figure}
\includegraphics[width=0.7\linewidth]{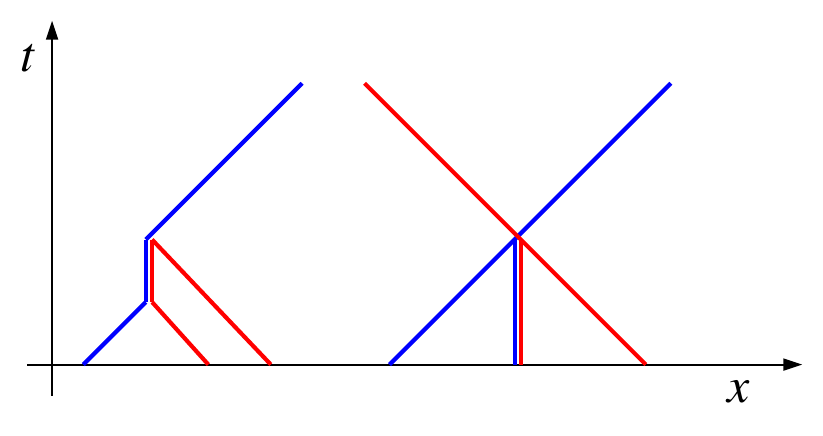}
\caption{A family of characteristics of $v$ (blue) and $w$ (red) with the same value. Characteristics are vertical in the frozen region. Their slopes are 1 for $v$ or $-1$ for $w$ in the liquid region.}
\label{fig15}
\end{figure}
\end{center}

Another non-trivial transition is when four characteristics meet at the same point, two representing $v$ and two representing $w$. For each of the functions $v$ and $w$, one of the characteristics is sloped and one is vertical. Two sloped characteristics, for $v$ and $w$, emanate from the meeting point. See the transition point on the right in  Fig. \ref{fig15}.

It is also possible for the sloped characteristics of $v$
and $w$ to pass through each other
without any interaction. This occurs when the $v$-characteristic carries a local minimum for $v$ and the $w$-characteristic carries a local maximum for $w$. For example, if $v(x,0) = x^2$ and $w(x,0) = -(x-1)^2$ then the characteristics corresponding to the value $0$ will pass through each other
without any interaction.

\section{Phenomena: Freezing and thawing}\label{sec:freeze}

 This section will discuss the shape of frozen and liquid regions, and freezing and thawing boundaries when the initial conditions satisfy Assumption \ref{m29.2} (iv)-(v), saying that the initial conditions have only finitely many local extrema on every finite interval and they are strictly monotone between local extrema.

The boundary between liquid and frozen regions consists of freezing curves and thawing curves. A freezing curve occurs if the value of $v$ increases along a family of  characteristics, the value of $w$ increases along a family of characteristics, the values for $v$ and $w$ span the same intervals, and the two families of characteristics meet. More precisely, wherever a characteristic of $v$ meets a characteristic of $w$ with the same value, the meeting point belongs to the boundary between liquid and frozen zones. The liquid zone is below and the frozen zone is above the boundary. See Fig. \ref{fig10}.

\begin{center}
\begin{figure}
\includegraphics[width=0.6\linewidth]{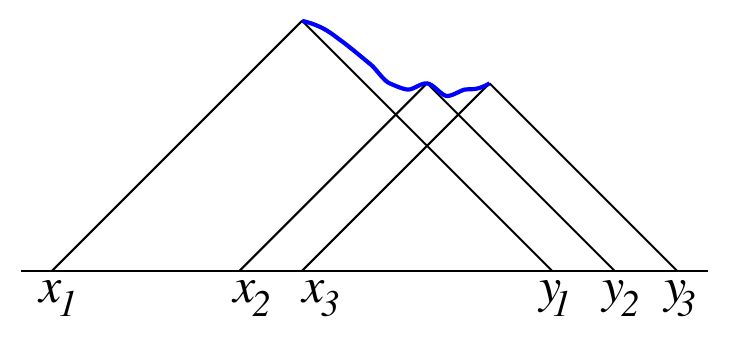}
\caption{Blue curve represents a part of the freezing boundary. $v$ is increasing between $x_1$ and $x_3$. $w$ is increasing between $y_1$ and $y_3$. $v(x_k,0)=w(y_k,0)$ for $k=1,2,3$. Black slanted lines represent characteristics of $v$ (with slope 1) and $w$ (with slope $-1$). The liquid zone is below the blue line and the frozen zone is above the blue line. }
\label{fig10}
\end{figure}
\end{center}

A thawing piece of the boundary occurs if the value of $v$ increases along a family of  characteristics, the value of $w$ decreases along a family of characteristics, the values for $v$ and $w$ span the same intervals, and the two families of characteristics meet. Wherever a characteristic of $v$ meets a characteristic of $w$ with the same value, the meeting point belongs to the boundary between liquid and frozen zones. The liquid zone is to the right and the frozen zone is to the left of the boundary. See Fig. \ref{fig11}.
The roles of $v$ and $w$ can be interchanged. See, for example, Fig. \ref{fig14}.

\begin{center}
\begin{figure}
\includegraphics[width=0.7\linewidth]{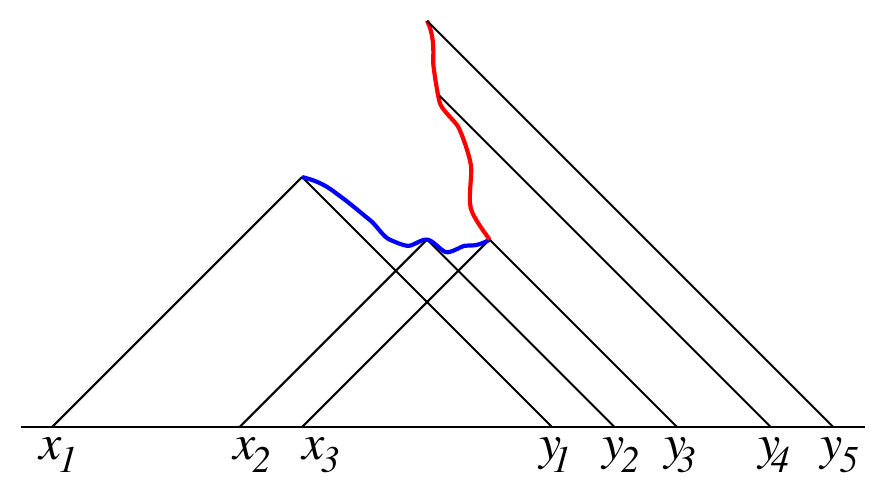}
\caption{Blue curve represents a part of freezing boundary.
Red curve represents a part of thawing boundary.
 $v$ is increasing between $x_1$ and $x_3$. $w$ is increasing between $y_1$ and $y_3$, and decreasing between $y_3$ and $y_5$. $v(x_k,0)=w(y_k,0)$ for $k=1,2,3$. Black slanted lines represent characteristics of $v$ (with slope 1) and $w$ (with slope $-1$). The liquid zone is to the right of the red curve and and below the blue curve. The frozen zone is to the left of the red curve and above the blue curve. }
\label{fig11}
\end{figure}
\end{center}

The simplest bounded frozen region is a curvilinear triangle. See Fig. \ref{fig14}. Its lower side is a freezing part of the boundary. Its left and right sides are thawing parts of the boundary. Frozen regions may have boundaries consisting of many freezing and thawing parts, as in Fig. \ref{fig17}. 

The freezing boundary has slopes  between $-1$ and 1. A thawing piece of the boundary has slopes  between 1 and $\infty$, or it has slopes  between  $-\infty$ and $-1$. 

\begin{center}
\begin{figure}
\includegraphics[width=0.7\linewidth]{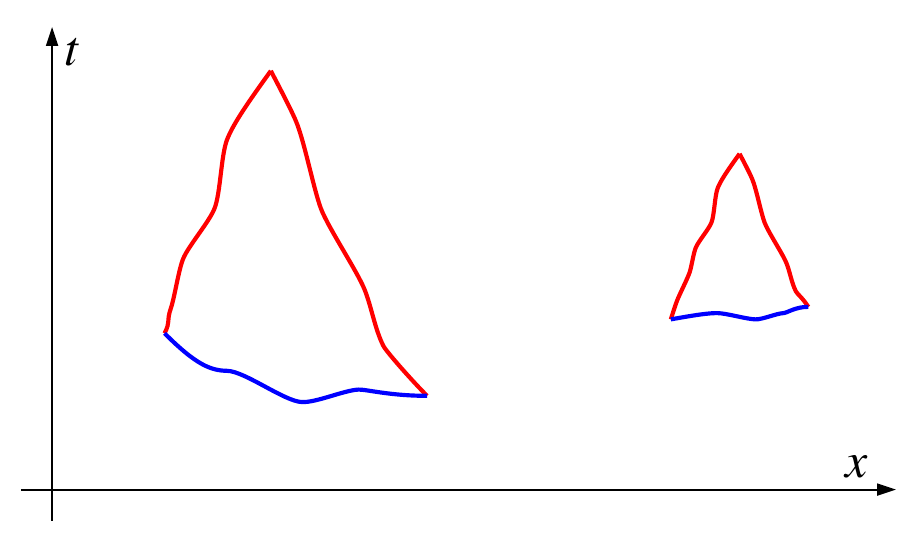}
\caption{The regions within curvilinear ``triangles'' are frozen. The region outside the triangles is liquid. Blue lines represent freezing boundaries. Their slopes are between $-1$ and 1. Red lines represent thawing boundaries. The slopes of the left sides of the triangles are between 1 and $\infty$. The slopes of the right sides of the triangles are between  $-\infty$ and $-1$. A characteristic emanating from  a vertex of a frozen triangle cannot intersect a curvilinear side of another triangle between its vertices.
  }
\label{fig14}
\end{figure}
\end{center}

\begin{center}
\begin{figure}
\includegraphics[width=0.5\linewidth]{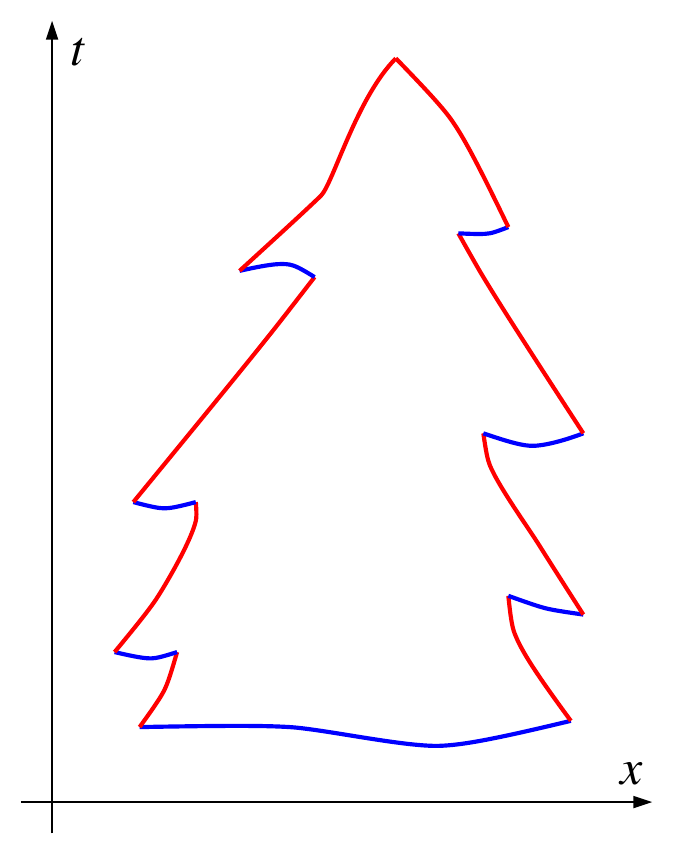}
\caption{The region within curvilinear ``Christmas tree'' is frozen.  Blue curves represent freezing parts of the boundary. Their slopes are between $-1$ and 1. Red curves represent thawing parts of the boundary. Their slopes are either greater than 1 or smaller than $-1$.
  }
\label{fig17}
\end{figure}
\end{center}

Fig. \ref{fig18} illustrates interaction between a frozen region and characteristics of $v$  and $w$. Characteristics are vertical in the frozen region and sloped in the liquid region.

\begin{center}
\begin{figure}
\includegraphics[width=0.6\linewidth]{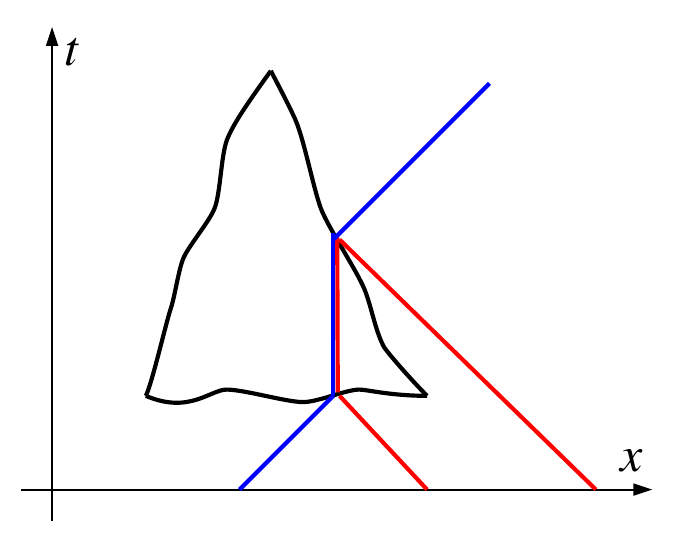}
\caption{
Interaction between a frozen region (interior of the  curvilinear black triangle) and characteristics of $v$ (blue) and $w$ (red). Characteristics are vertical in the frozen region and sloped in the liquid region.
  }
\label{fig18}
\end{figure}
\end{center}

See also Example \ref{a2.1} (ii) with the accompanying Fig. \ref{fig:31}.

\subsection{Freezing Prevents Shocks}

 It is  typical for solutions to nonlinear transport equations to
 develop shocks (discontinuities). The sequence of graphs
in Figs. \ref{fig1a}-\ref{fig4b} illustrate why
\(v\)-characteristics with different values cannot meet to
cause a shock. Characteristics of $v$ are green and those of $w$ are red.

\FloatBarrier
\begin{center}
\begin{figure}[h]
\includegraphics[width=0.5\linewidth]{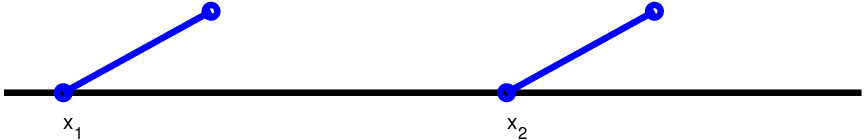}
\caption{Suppose that
  $v(x_{1})<v(x_{2})$. Parallel
  characteristic can't meet to cause a shock.}
\label{fig1a}
\end{figure}
\end{center}

\begin{center}
\begin{figure}[h]
\includegraphics[width=0.5\linewidth]{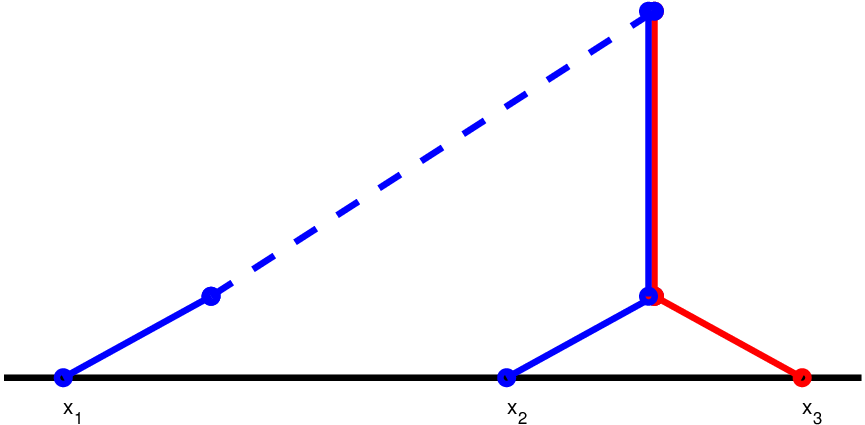}
\caption{But if the  $v$-characteristic from $(x_2,0)$ meets a
$w$-characteristic and freezes, the 
$v$-characteristic from $(x_1,0)$ might catch up.}
\label{fige2a}
\end{figure}
\end{center}
    
\begin{center}
\begin{figure}[h]
\includegraphics[width=0.5\linewidth]{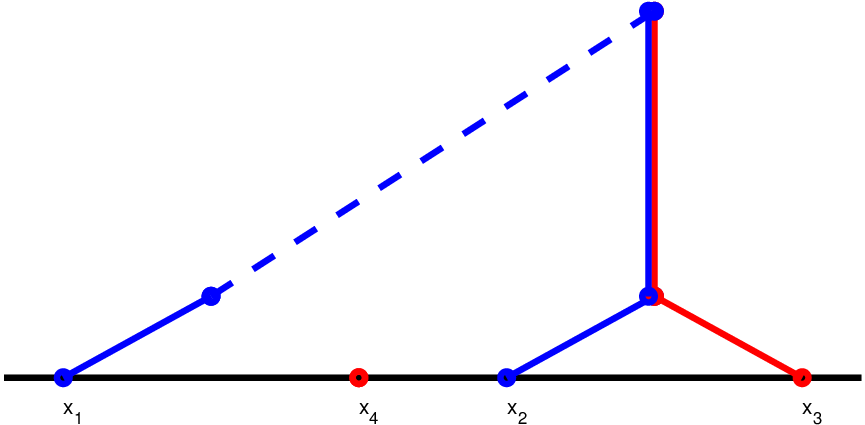}
\caption{However, as $w(x_{1},0)<v(x_{1},0)$, and
    $w(x_{3},0)=v(x_{2},0)>v(x_1,0)$, there exists
    $x_{1}<x_{4}<x_{3}$ with $w(x_{4},0)=v(x_1,0)$.}
\label{fig3a}
\end{figure}
\end{center}
    
\begin{center}
\begin{figure}[h]
\includegraphics[width=0.5\linewidth]{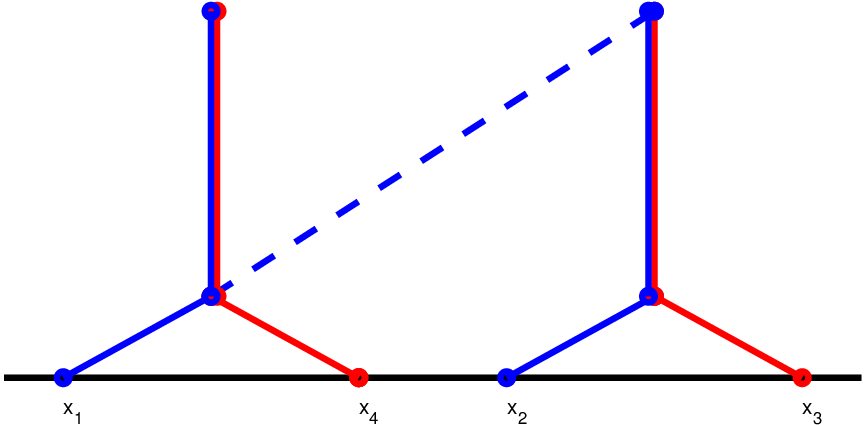}
\caption{And the $w$-characteristic starting from $(x_{4},0)$
    freezes the $v$-characteristic starting from $(x_{1},0)$
    before the shock can occur.}
\label{fig4a}
\end{figure}
\end{center}
\FloatBarrier
\begin{center}
\begin{figure}
\includegraphics[width=0.5\linewidth]{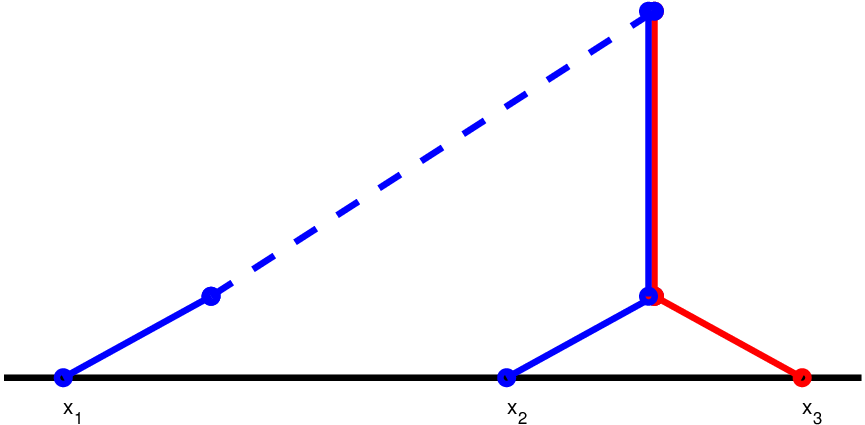}
\caption{If $v(x_{1},0)>v(x_{2},0)$, we need 
    slightly different argument.  Parallel
characteristic can't meet to cause a shock, so assume that
the $v$-characteristic emanating from $(x_{2},0)$ freezes.}
\label{fige12}
\end{figure}
\end{center}

\begin{center}
\begin{figure}
\includegraphics[width=0.5\linewidth]{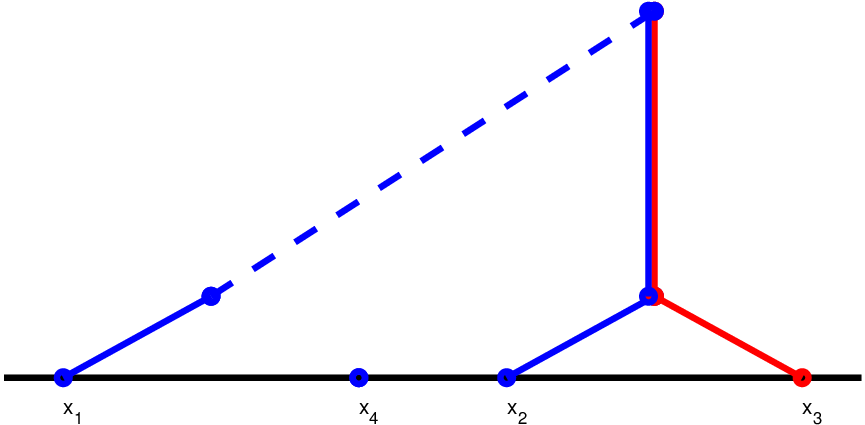}
\caption{As $v(x,0)$ is increasing
to the left of $x_2$ and $v(x_{1},0)>v(x_{2},0)$,
there must be an  \(x_{4}\) between \(x_{1}\)
and \(x_{2}\) satisfying $v(x_{4},0)=v(x_2,0)$.}
\label{fig3b}
\end{figure}
\end{center}

\begin{center}
\begin{figure}
\includegraphics[width=0.5\linewidth]{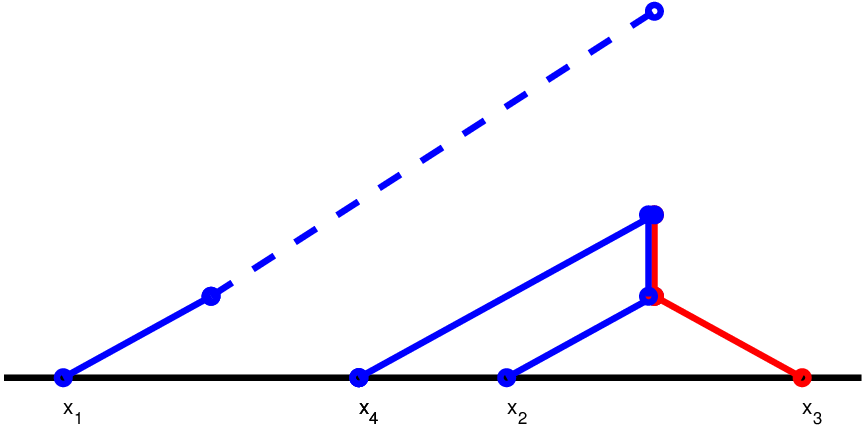}
\caption{The decreasing $v$-characteristic emanating from $(x_{4},0)$
doesn't freeze, and is parallel to the $v$-characteristic
emanating from $(x_{1},0)$, so that characteristic cannot
intersect the $v$-characteristic emanating from $(x_{2},0)$.
The  $v$-characteristics from $(x_{2},0)$ and $(x_{4},0)$
may (but need not) meet and
thaw, as they do  in the figure.}
\label{fig4b}
\end{figure}
\end{center}

\section{Phenomena: annihilation of sublevel and superlevel sets}\label{sec:ann}

A function can be recovered from knowledge of its sublevel
or superlevel sets. As we will see in Section
\ref{sec:gen}, when the initial conditions satisfy
Assumption \ref{m29.2}, the evolution of the sublevel sets
of \(v\) and the superlevel sets of \(w\) is surprisingly
simple. Moreover, the description of the evolution of
these sets persists even without Assumption
\ref{m29.2}. We illustrate this  below, leaving the
details for Section \ref{sec:gen}.

The sets $A(v,b,t) = \{x:v(x,t)\leq b\} $ represent sublevel
sets of of \(v\) at a fixed \(t\)  and
the sets $A(w,b,t) = \{x:w(x,t)\geq b\}$ represent superlevels
of $w$  at \(t\). See Figs. \ref{fig21}-\ref{fig25}.

\FloatBarrier
\begin{center}
\begin{figure}
\includegraphics[width=0.8\linewidth]{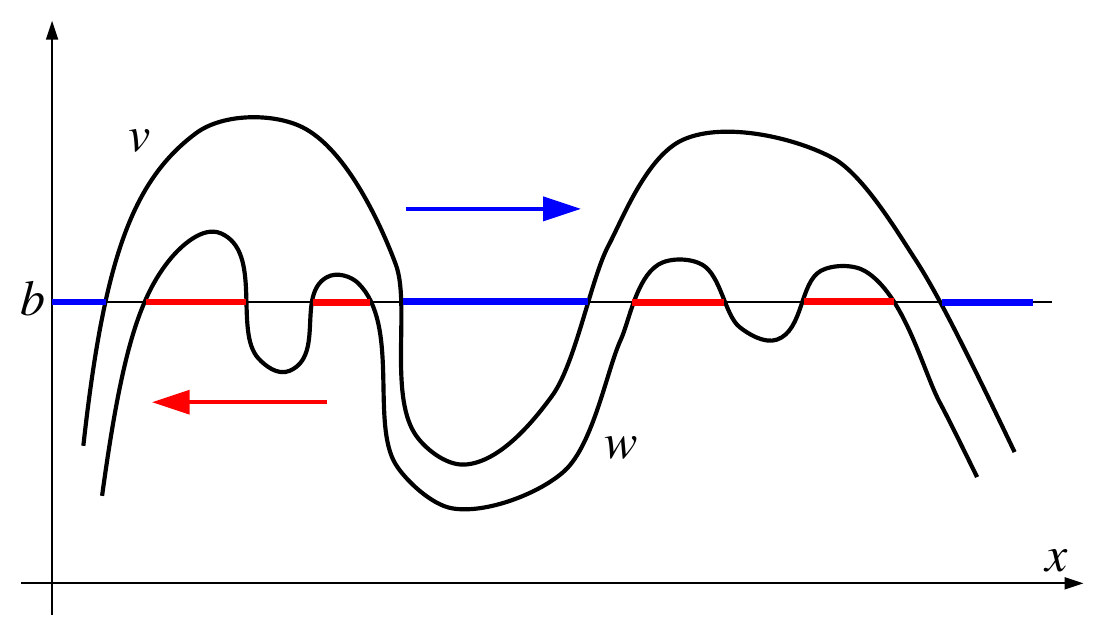}
\caption{ Blue intervals represent the sublevel set of
  $v(x,0)$ and red intervals represent the superlevel set
  of $w(x,0)$, both corresponding to value $b$.  With
  time, the blue set is moving to the right and the red
  set is moving to the left. When they meet, the sets annihilate each
  other at the same rate. The vertical axis represents the
  values of $v$ and $w$, not time.  }
\label{fig21}
\end{figure}
\end{center}

\begin{center}
\begin{figure}
\includegraphics[width=0.8\linewidth]{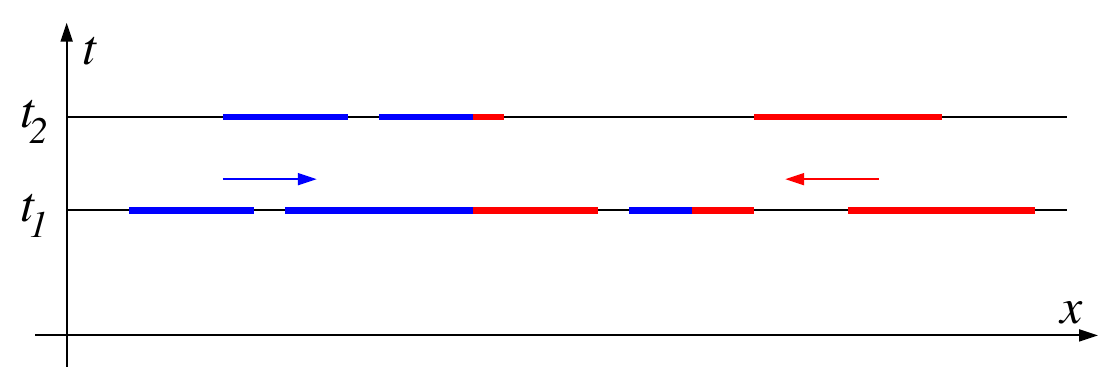}
\caption{ Blue intervals represent the sublevel set of
  $v(x,t)$ and red intervals represent the superlevel set
  of $w(x,t)$, both corresponding to value $b$, at times
  $t=t_1,t_2$, with $t_1< t_2$. Until they meet, blue intervals move to the
  right with speed 1 and red intervals move left with the
  same speed. When a blue interval meets a red interval, the
  endpoints that meet stop moving, while the
  other endpoints continue to move. Thus
  both intervals shorten at the same rate until the
  shorter interval becomes empty. Then the interval that
  remains resumes its translation.}
\label{fig22}
\end{figure}
\end{center}

\begin{center}
\begin{figure}
\includegraphics[width=0.8\linewidth]{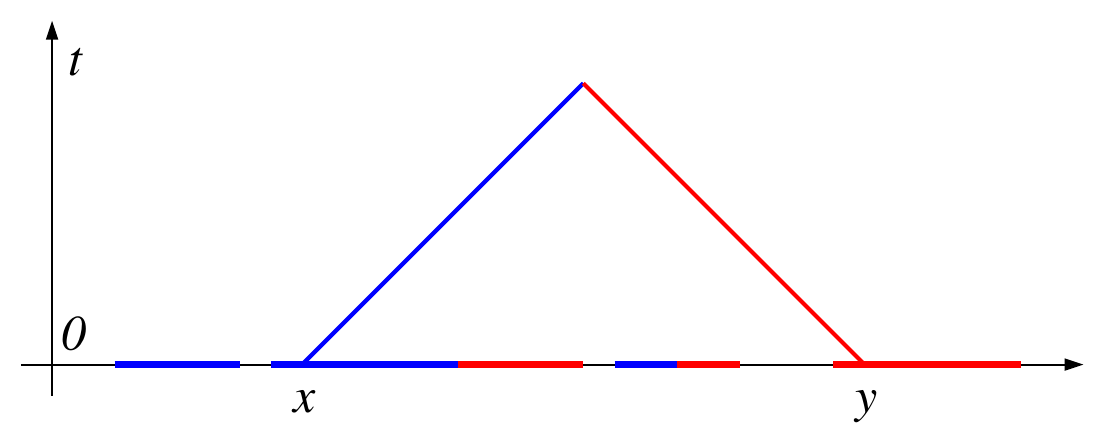}
\caption{
Blue intervals represent the sublevel set of $v(x,0)$ and red intervals represent the superlevel set of $w(x,0)$, both corresponding to  value $b$.
Slanted lines represent rays emanating from $x$ and $y$ and containing  points in space-time which belong to the sublevel set of $v$ and superlevel set of $w$ corresponding to value $b$. 
The total length of the blue intervals between $x$ and $y$ is equal to the total length of red intervals between $x$ and $y$.
The slanted lines are \emph{not} characteristics of $v$ and $w$; in particular, their slopes are always 1 for $v$ and $-1$ for $w$. 
  }
\label{fig23}
\end{figure}
\end{center}

\begin{center}
\begin{figure}
\includegraphics[width=0.7\linewidth]{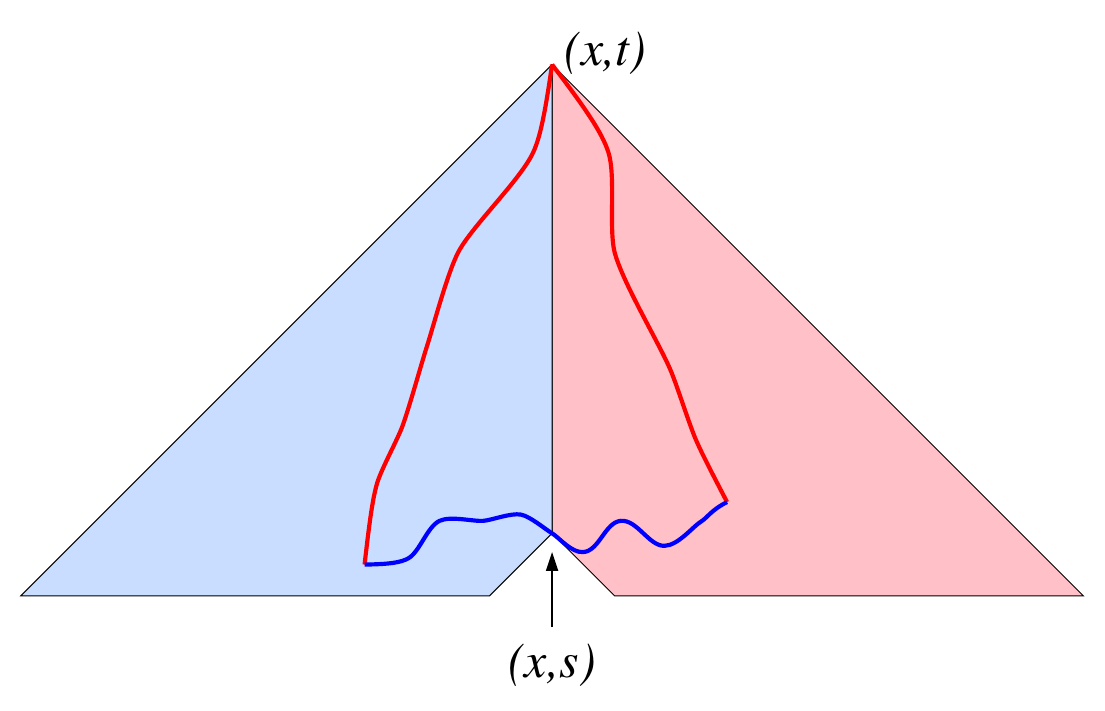}
\caption{
Interaction between a frozen region within curvilinear triangle and sublevels of $v$ (light blue region on the left) and superlevels of $w$ (light red region on the right), both corresponding to the same value of $b$. The value of $b$ is chosen so that the sublevels and superlevels reduce to a single point $(x,t)$ at the top of the frozen region. The $v$-sublevels and $w$-superlevels meet for the first time at $(x,s)$, a point in space time with the same first coordinate as $(x,t)$ that belongs to the freezing part of the boundary of the triangle.  Blue curve represents the freezing boundary. Red curves represent thawing boundaries. 
  }
\label{fig25}
\end{figure}
\end{center}

\section{Construction and existence of solutions}\label{sec:cons}

In this section we will prove one of our main results under the following rather strong   assumption.

\begin{assumption}\label{m29.2}

(i) $v(x,0) \geq w(x,0)$ for $x\in I$.

(ii) If $I=[a_1,a_2]$ then $v(a_1,0)=w(a_1,0)$ and $v(a_2,0)=w(a_2,0)$.

(iii) The functions $v(x,0)$ and $w(x,0)$ are Lipschitz with a constant $\lambda <\infty$, i.e., for all $x$ and $y$,
\begin{align}\label{m29.3}
|v(x,0)-v(y,0)| \leq \lambda |x-y|,
\qquad
|w(x,0)-w(y,0)| \leq \lambda |x-y|.
\end{align}

(iv) The total number of local extrema of  $v(x,0)$ and $w(x,0)$ is finite on every finite interval.


\end{assumption}

We will define  ``freezing curves'' $\calF$ and a ``thawing curves'' $\calT$ in the statement of the next lemma.

The idea of the freezing curve $\calF$ is that $v>w$ below the curve (so this is the liquid zone) and $v=w$ on $\calF$, hence the name the ``freezing curve.''
For the thawing curve $\calT$, $v>w$ above the curve (so this is the liquid zone) and $v=w$ on $\calT$.

\begin{lemma}\label{m22.1}
(i) Suppose that $y_1<y_2$,
$[y_1,y_2]\in I$ and let
\begin{align}\label{m25.1}
\wh U&=\{(x,t): t\geq 0, x\in( y_1 + t , y_2 -t)\},\\
\calF&=\{(x,t)\in \wh U: v(x-t,0)= w(x+t,0)\}.\label{m25.2}
\end{align}
If the functions $ v(x,0)$ and $ w(x,0)$ are continuous
and strictly increasing on $[y_1, y_2]$, then the set
$\calF$ is connected, possibly empty. If it is non-empty
then for some $z_1,z_2 \in [y_1, y_2]$, $\calF$ is the
graph of a function $h_\calF: [z_1,z_2] \to [0,\infty)$
(from space to time) such that
\begin{align}\label{m22.5}
|h_\calF(x_1) - h_\calF(x_2)|
< |x_1-x_2|,
\end{align}
for all $x_1,x_2\in [z_1,z_2]$.
If $\calF$ is non-empty then $\wh U\setminus \calF$ has two connected components, one below $\calF$ (say, $\wh U^-$), and the other above $\calF$ (say, $\wh U^+$).
One of the sets $\wh U^-$ and $\wh U^+$ may be empty.

(ii) Suppose that $\eps>0$,
$[z-\eps,z+\eps]\in I$, and let  
\begin{align}\label{m29.1}
U&=U_{z,\eps}=\{(x,t): t\geq 0, x\in( z-\eps + t , z+\eps -t)\},\\
U_r&=\{(x,t)\in U: x\geq z-t\},\notag\\
\calT&=\{(x,t)\in U_r: x\leq z, x+t\geq z, v(x,z-x)= w(x+t,0)\}.\label{a28.1}
\end{align}
Suppose that the function $x\to w(x,0)$ is continuous and
strictly decreasing on $[z, z+\eps]$, the functions
$x\to v(x,z-x)$ and $x\to w(x,z-x)$ are (defined and) continuous and strictly
increasing on $[z-\eps/2,z]$,  
$ v(x,z-x)= w(x,z-x)$  for $x\in[z-\eps/2,z]$ (i.e. this set is frozen),
and
$v(z,0)=w(z,0)$. Then the set $\calT$ is non-empty,
connected and for some $z_3 \in [z-\eps/2, z)$, $\calT$ is
the graph of a function $h_\calT: [z_3,z] \to [0,\infty)$
(from space to time) such that
\begin{align}\label{m22.6}
h_\calT(x_1) - h_\calT(x_2)
> x_2-x_1,
\end{align}
 for all $z_3\leq x_1<x_2\leq z$.
The set $U_r\setminus \calT$ has two connected components, one below $\calT$ (say, $U_r^-$), and the other above $\calT$ (say, $U_r^+$).

\end{lemma}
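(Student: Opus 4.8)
I would pass to light-cone coordinates $p=x-t$, $q=x+t$. This is a linear bijection of $\{t\ge 0\}$ onto $\{p\le q\}$ taking $\wh U$ onto $R=\{(p,q):y_1<p\le q<y_2\}$ and $\calF$ onto $\calF^{*}=\{(p,q)\in R:\phi(p)=\psi(q)\}$, where $\phi:=v(\cdot,0)$ and $\psi:=w(\cdot,0)$ are continuous and strictly increasing with $\phi\ge\psi$ by \eqref{n10.8}. For each fixed $q\in(y_1,y_2)$, the restriction of $\phi$ to $(y_1,q]$ is a strictly increasing bijection onto $(\phi(y_1),\phi(q)]$, and since $\psi(q)\le\phi(q)$, the equation $\phi(p)=\psi(q)$ has a solution $p\in(y_1,q]$ iff $\psi(q)>\phi(y_1)$, in which case $p=\eta(q):=\phi^{-1}(\psi(q))$ is unique. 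The set $J:=\{q\in(y_1,y_2):\psi(q)>\phi(y_1)\}$ is an open subinterval of $(y_1,y_2)$ (continuity and monotonicity of $\psi$) whose right endpoint is always $y_2$, and $\eta:J\to(y_1,y_2)$ is continuous and strictly increasing. Hence $\calF^{*}=\{(\eta(q),q):q\in J\}$ is the graph of a continuous function, so $\calF$ is connected and is empty precisely when $J=\emptyset$. Transforming back, along $\calF$ one has $x=\tfrac12(\eta(q)+q)$ and $t=\tfrac12(q-\eta(q))$; since $q\mapsto\eta(q)+q$ is strictly increasing, $q\mapsto x$ is an increasing homeomorphism of $J$ onto an interval with endpoints $z_1\le z_2$, and $h_\calF(x):=\tfrac12\bigl(q(x)-\eta(q(x))\bigr)\ge 0$ identifies $\calF$ with the graph of $h_\calF$. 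For \eqref{m22.5}: if $(x_i,t_i)$, $i=1,2$, are distinct points of $\calF$ with $q_1<q_2$, then $\eta(q_1)<\eta(q_2)$, so $x_1-t_1<x_2-t_2$ and $x_1+t_1<x_2+t_2$; subtracting the two inequalities gives $|t_1-t_2|<x_2-x_1=|x_1-x_2|$.

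\textbf{Complement for part (i).} Put $g(x,t)=v(x-t,0)-w(x+t,0)=\phi(p)-\psi(q)$, so $\wh U\setminus\calF=\{g>0\}\sqcup\{g<0\}$. The function $g$ is strictly decreasing in $t$ along slope-$1$ lines (fixed $p$) and along slope-$(-1)$ lines (fixed $q$). Any point of $\{g<0\}$ is joined, inside $\{g<0\}$, to the ``apex corner'' $\{(p,q)\in R:p<p_*,\,q>q_*\}$ (for fixed $p_*,q_*$ with $\phi(p_*)<\psi(q_*)$) by first decreasing $p$ and then increasing $q$; that corner is connected, so $\{g<0\}=\wh U^{+}$ is connected (or empty, e.g. when $\phi(y_1)\ge\psi(y_2)$). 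Symmetrically, any point of $\{g>0\}$ is joined inside $\{g>0\}$ to a base point by the segment on which $q$ is decreased down to $p$ (along which $g$ increases and which remains in $R$), so connectedness of $\{g>0\}=\wh U^{-}$ reduces to connectedness of $\{y\in(y_1,y_2):\phi(y)>\psi(y)\}$. This last set is an interval in the setting in which the lemma is applied (where $[y_1,y_2]$ is chosen so that $v(\cdot,0)-w(\cdot,0)$ does not return to $0$ in the interior), and that is the point at which the statement really uses more than bare strict monotonicity of $\phi,\psi$.

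\textbf{Plan for part (ii).} The argument is parallel but with the monotonicities reversed. Let $\psi:=w(\cdot,0)$, continuous and strictly decreasing on $[z,z+\eps]$, hence a bijection onto $[\psi(z+\eps),\psi(z)]$ with continuous strictly decreasing inverse $\psi^{-1}$, and let $\phi(x):=v(x,z-x)$, continuous and strictly increasing on $[z-\eps/2,z]$ with $\phi(z)=v(z,0)=w(z,0)=\psi(z)$. For $x\le z$ the equation $\psi(x+t)=\phi(x)$ has a unique nonnegative solution $t$ exactly when $\phi(x)\ge\psi(z+\eps)$, namely $t=\tau(x):=\psi^{-1}(\phi(x))-x$, which is continuous; since $\psi^{-1}\circ\phi$ is strictly decreasing, both $x+\tau(x)=\psi^{-1}(\phi(x))$ and $\tau(x)$ are strictly decreasing. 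Thus $\calT$ is the graph over a maximal interval $[z_3,z]$ of $h_\calT:=\tau$; it is connected, and nonempty because $(z,0)\in\calT$ with $\tau(z)=0$, while $\tau(x)>z-x>0$ for $x<z$ (as $\psi^{-1}(\phi(x))>z$ there) shows $z_3<z$. Inequality \eqref{m22.6} is immediate: for $z_3\le x_1<x_2\le z$, $x_1+\tau(x_1)=\psi^{-1}(\phi(x_1))>\psi^{-1}(\phi(x_2))=x_2+\tau(x_2)$. Finally, the graph of $h_\calT$ runs from $(z,0)$ on the base of $U_r$ to a point of $\partial U_r$ (its left edge) and so cuts $U_r$ into the region below $\calT$ (containing the base segment $(z,z+\eps]\times\{0\}$) and the region above $\calT$; each is connected, which one checks by joining points to those reference sets while staying on the respective side of the continuous graph, using that the sign of $G(x,t):=\phi(x)-\psi(x+t)$ is monotone along the relevant directions.

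\textbf{Where the work is.} The conceptual core---each of $\calF$, $\calT$ is the graph of a monotone function obtained by composing a monotone function with the inverse of another, and the Lipschitz-type bounds then come from reading off $p=x-t$ and $q=x+t$ (resp.\ $x$ and $x+t$) along the curve---is short once one changes coordinates. The laborious part, which accounts for essentially all the length of a careful proof, is the boundary bookkeeping: identifying the exact domains $[z_1,z_2]$ and $[z_3,z]$, checking whether the curves terminate on the base or on the slanted edges of $\wh U$ (resp.\ $U_r$), and verifying that each complementary set is \emph{exactly} one connected component---the only genuinely delicate point here being, for $\wh U^{-}$ in part (i), that $\{v(\cdot,0)>w(\cdot,0)\}$ is an interval on $[y_1,y_2]$, which holds in the situations where the lemma is invoked.
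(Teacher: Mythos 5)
Your proposal is correct in substance and follows the same underlying mechanism as the paper's proof: both curves are obtained by matching values of two strictly monotone functions, and the slope bounds come from comparing $x-t$ and $x+t$ (resp.\ $x$ and $x+t$) along the curve. The organization differs usefully: where the paper establishes the graph property, monotonicity, connectedness and the bounds \eqref{m22.5}, \eqref{m22.6} by a sequence of contradiction arguments applied to pairs of points of $\calF$ or $\calT$, you write the curves explicitly as $p=\phi^{-1}(\psi(q))$ and $t=\psi^{-1}(\phi(x))-x$, after which the graph property, strict monotonicity and the Lipschitz-type inequalities are immediate consequences of monotonicity of compositions; this is cleaner and also pins down the domains $J$ and $[z_3,z]$ without separate argument. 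Two remarks. First, your reduction of the connectedness of $\wh U^-$ to the connectedness of $\{y\in(y_1,y_2):v(y,0)>w(y,0)\}$ is accurate, and your observation that this requires more than bare strict monotonicity is a genuine point: if $v(\cdot,0)=w(\cdot,0)$ at an isolated interior point of $(y_1,y_2)$ (which the hypotheses of part (i) do not exclude), $\calF$ touches the base $\{t=0\}$ there and $\wh U\setminus\calF$ has three components. The paper's own proof of this step (via matching of ranges near $y_1$ and $y_2$) silently assumes the configuration in which the lemma is later invoked, so you have identified a real imprecision rather than created one. Second, the remaining "boundary bookkeeping" you defer in part (ii) does include one nontrivial verification the paper carries out explicitly, namely that the left endpoint of $\calT$ lies on the slanted edge $x=z-\eps+t$ of $U$ (not merely "somewhere on $\partial U_r$"); this is what guarantees $\calT$ actually separates $U_r$ into exactly two pieces, so it should not be omitted from a complete write-up.
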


The strange looking assumptions in part (ii) of Lemma \ref{m22.1} that  the function
$x\to v(x,z-x)$ is defined and 
increasing, and the set is frozen are needed in
case (iv) in the proof of Lemma \ref{j6.5}.
In that proof we show that the assumptions in part (i) of Lemma \ref{m22.1}
imply those in part (ii). See Fig. \ref{fig19a}.

\begin{proof}
(i)
Suppose that $\calF$ is non-empty, $(x_1,t_1), (x_2,t_2) \in \calF$ and $(x_1,t_1)\ne (x_2,t_2) $. 

We will argue that $x_1-t_1\ne x_2-t_2$. Suppose otherwise. 
If $t_1=t_2$ then $x_1 = x_2$, a contradiction.
Assume without loss of generality that $t_1< t_2$. Then
\begin{align*}
x_1 + t_1 &= x_1-t_1+ 2 t_1= x_2-t_2 + 2 t_1 
< x_2-t_2 + 2 t_2 = x_2 + t_2,\\
w(x_1+t_1,0) &= v(x_1-t_1,0) = v(x_2 - t_2,0) = w(x_2+t_2,0).
\end{align*}
But we cannot have $w(x_1+t_1,0) = w(x_2+t_2,0)$ because
$x_1+t_1 < x_2+t_2$ and $ w(x,0)$ is strictly increasing.
Hence $x_1-t_1\ne x_2-t_2$
and for a similar reason $x_1+t_1\ne x_2+t_2$.

We will
assume without loss of generality that $x_1-t_1< x_2-t_2$.
The definition of $\calF$ and the fact that  $ v(x,0)$ and $ w(x,0)$ are strictly increasing imply that $x_1+t_1< x_2+t_2$.
Adding the two inequalities together, we obtain $x_1 < x_2$. Hence, $\calF$ is the graph of a function $h_\calF$.

The functions  $ v(x,0)$ and $ w(x,0)$ are continuous and strictly increasing,
$v(x_1-t_1,0) = w(x_1+t_1,0) $ and $ v(x_2 - t_2,0) = w(x_2+t_2,0)$,
so for every $z_1 \in ( x_1-t_1, x_2-t_2)$ there exists $f(z_1) :=z_2 \in ( x_1+t_1, x_2+t_2)$ such that $v(z_1,0)=w(z_2,0)$.
Let $x_3 = (z_1+z_2)/2$ and $t_3 = (z_2-z_1)/2$. Then 
$(x_3,t_3)\in\calF$.
The function $f$ is continuous and maps $( x_1-t_1, x_2-t_2)$ onto $( x_1+t_1, x_2+t_2)$. This implies that  $z_1\to w(f(z_1),0)$ is continuous and, therefore, $z_1 \to (x_3,t_3)$ is continuous. Thus $\calF$ is a connected set. 

Suppose that $\calF$ does not satisfy \eqref{m22.5}. Then there exist $(x_1,t_1), (x_2,t_2) \in \calF$ with $x_1-t_1< x_2-t_2$ and $|t_1-t_2| > x_2-x_1$.
If $t_1 > t_2 +x_2-x_1$ then $x_1 + t_1 > x_2 +t_2$ 
and
\begin{align*}
w(x_1+t_1,0) = v(x_1-t_1,0) < v(x_2-t_2,0) = w(x_2-t_2,0),
\end{align*}
a contradiction with the assumption that $ w(x,0)$ is strictly increasing.
If $t_2 > t_1 +x_2-x_1$ then $x_1 - t_1 > x_2 -t_2$,
a contradiction with the assumption that $x_1-t_1< x_2-t_2$.
This proves \eqref{m22.5}.

If $\calF$ is non-empty then there exist $\eps_1,\eps_2>0$ such that 
the range of $v(\,\cdot\,,0)$ over $[y_1,y_1+\eps_1]$ is the same as the 
range of $w(\,\cdot\,,0)$ over $[y_2-\eps_2,y_2]$. Hence, for every $x\in [y_1,y_1+\eps_1]$ there is $y\in[y_2-\eps_2,y_2]$ such that $v(x,0) = w(y,0)$, and for every $y\in [y_2-\eps_2,y_2]$ there is $x\in[y_1,y_1+\eps_1]$ such that $v(x,0) = w(y,0)$. Therefore, $\calF$ must extend from the left hand side boundary of $\wh U$ to the right hand side boundary of $\wh U$. As a consequence,
 $\wh U\setminus \calF$ has two connected components, $\wh U^-$  below $\calF$, and $\wh U^+$ above $\calF$.

This completes the proof of (i).

\bigskip
(ii) Recall that
$v(z,0)=w(z,0)$,
the function $x\to w(x,0)$ is continuous and strictly decreasing on $[z, z+\eps]$, and the function $x\to v(x,z-x)$ is  continuous and strictly increasing on $[z-\eps/2,z]$. Hence
there exist $x_1\in [z-\eps/2,z]$ and $y_1\in [z, z+\eps]$ such that $v(x_1, z-x_1) = w(y_1,0)$. Then  $x_1+(y_1-x_1) \geq z$ and 
\begin{align*}
v(x_1 , z- x_1) = w(y_1,0) = w(x_1 + (y_1-x_1),0),
\end{align*}
thus showing that
$(x_1, y_1-x_1)  \in \calT$ and, therefore, $\calT$ is non-empty.

Suppose that $(x_1,t_1), (x_2,t_2) \in \calT$ and $(x_1,t_1)\ne (x_2,t_2) $. 

We will argue that $x_1\ne x_2$. Suppose otherwise. 
Assume without loss of generality that $t_1< t_2$. Then
\begin{align}\notag
x_1 + t_1 &<  x_2 + t_2,\\
w(x_1+t_1,0) &= v(x_1,z-x_1) = v(x_2,z - x_2) = w(x_2+t_2,0).\label{m23.1}
\end{align}
But we cannot have $w(x_1+t_1,0) = w(x_2+t_2,0)$ because
$x_1+t_1 < x_2+t_2$ and $ w(x,0)$ is strictly decreasing.
Hence $x_1\ne x_2$ and, therefore, $\calT$ is the graph of a function $h_\calT$.

Assume without loss of generality that $x_1< x_2$.
Since
$v(z,0)=w(z,0)$,
the function $x\to w(x,0)$ is continuous and strictly decreasing on $[z, z+\eps]$, and the function $x\to v(x,z-x)$ is  continuous and strictly increasing on $[z-\eps/2,z]$,
we have $x_1+t_1> x_2+t_2$.

By continuity of $x\to v(x,z-x)$ and $x\to w(x,0)$, for every $z_1 \in ( x_1, x_2)$ there exists $g(z_1) :=z_2 \in ( x_1+t_1, x_2+t_2)$ such that $v(z_1,z-z_1)=w(z_2,0)$.
Let $x_3 = z_1$ and $t_3 = z_2-z_1$. Then 
$(x_3,t_3)\in\calT$.
The function $g$ is continuous and maps $( x_1, x_2)$ onto $( x_2+t_2, x_1+t_1)$. This implies that  $z_1\to w(g(z_1),0)$ is continuous and, therefore, $z_1 \to (x_3,t_3)$ is continuous. We conclude that $\calT$ is a connected set. 

Suppose that $\calT$ does not satisfy \eqref{m22.6}. Then there exist $(x_1,t_1), (x_2,t_2) \in \calF$ with $x_1< x_2\leq z$ and $ h_\calT(x_1)-h_\calT(x_2) \leq x_2-x_1$. Since $x\to w(x,0)$ is  decreasing,
\begin{align*}
t_1-t_2 &= h_\calT(x_1)-h_\calT(x_2) \leq x_2-x_1,\\
x_1+t_1 &\leq x_2+t_2,\\
v(x_1,z-x_1)&=w(x_1+t_1,0) \geq w(x_2+t_2,0) = v(x_2, z-x_2).
\end{align*}
This is a contradiction with the assumption that $x\to v(x,z-x)$ is   strictly increasing. We conclude that \eqref{m22.6} holds true.

Since $\calT$ is connected,  $h_\calT$ is defined on an interval $[z_3,z]$ for some $z_3\in[z-\eps/2,z)$. We will argue that $h_\calT(z_3) = \eps - (z-z_3)$, i.e., $\calT$ touches the boundary of $U_r$ at the point $(z_3, h_\calT(z_3))= (z_3, \eps - (z-z_3))$. Suppose that $h_\calT(z_3) < \eps - (z-z_3)$. Then $z_3 >z - \eps/2$. If $y_1 = z_3+ h_\calT(z_3)$ then
\begin{align*}
y_1 = z_3+ h_\calT(z_3)= z_3+ \eps - (z-z_3) < z+\eps .
\end{align*}
Let $\delta>0$ be the distance from $(z_3, h_\calT(z_3))=(z_3, y_1-z_3)$ to the boundary of $U_r$.
By continuity and strict monotonicity of $x\to w(x,0)$ and $x\to v(x,z-x)$ we can find 
$z_4\in(z-\eps/2, z_3)$ and $y_2 \in (y_1, z+\eps)$ such that $v(z_4,z-z_4) = w(y_2,0)
= w (z_4 +(y_2-z_4),0)$ and the distance from $(z_4, y_2-z_4)$ to the boundary of $U_r$ is greater than $\delta/2$. Therefore, $(z_4, y_2-z_4)\in \calT$, a contradiction with the assumption that $z_3$ is the left endpoint of  the domain of $h_\calT$.
We see that $\calT$  extends from the left hand side of the upper boundary of $U_r$ to  $(z,0)$. As a consequence,
 $U_r\setminus \calT$ has two connected components, $U_r^-$  below $\calT$, and $U_r^+$ above $\calT$.
\end{proof}

\begin{lemma}\label{j6.5}
Suppose that Assumption \ref{m29.2} holds. Then

(a)
For every $z\in I$ there exists $\eps>0$ such that either $[z-\eps,z+\eps]\in I$ or $I=[a_1,a_2]$ and $z=a_1$ or $a_2$, and 

(1) both $v(x,0)$ and $w(x,0)$ are strictly monotone on $[z-\eps,z]\cap I$ and they are strictly monotone on $[z,z+\eps]\cap I$, and

(2) either $\inf_{x\in[z-\eps, z+\eps]\cap I}v(x,0)\geq \sup_{x\in[z-\eps, z+\eps]\cap I}w(x,0)$ or $v(z,0)=w(z,0)$.

\medskip

(b) Suppose $z$ and $\eps$ satisfy (1) and (2) and let
\begin{align*}
U=\{(x,t): t\geq 0, x\in( z-\eps + t , z+\eps -t)\cap I\}.
\end{align*}
Then there exists a continuous  solution to 
\eqref{n10.4}-\eqref{a5.4} in $U$ in the sense of Theorem \ref{m28.4} satisfying \eqref{a5.4}.

\end{lemma}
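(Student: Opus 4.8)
The plan is to reduce the construction in $U$ to a finite number of applications of Lemma \ref{m22.1}, patched together along characteristics, using Assumption \ref{m29.2}(iv) to guarantee finiteness.

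\textbf{Part (a).} Fix $z\in I$. If $z$ is interior to $I$ (or $I=\R$), use Assumption \ref{m29.2}(iv): on any bounded neighborhood of $z$ the functions $v(\cdot,0)$ and $w(\cdot,0)$ have finitely many local extrema, so there is $\eps_0>0$ so small that on $[z-\eps_0,z]$ and on $[z,z+\eps_0]$ each of $v(\cdot,0)$, $w(\cdot,0)$ is strictly monotone (shrink until no extremum other than possibly $z$ itself lies in the closed half-intervals); this gives (1). For (2): if $v(z,0)>w(z,0)$, then by continuity $\inf_{[z-\eps,z+\eps]} v(\cdot,0) > \sup_{[z-\eps,z+\eps]} w(\cdot,0)$ for $\eps$ small enough; if $v(z,0)=w(z,0)$ the second alternative holds directly; the case $v(z,0)<w(z,0)$ is excluded by Assumption \ref{m29.2}(i). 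For $I=[a_1,a_2]$ and $z=a_1$ or $z=a_2$, use Assumption \ref{m29.2}(ii), $v(a_i,0)=w(a_i,0)$, so the $v(z,0)=w(z,0)$ alternative of (2) always holds, and (1) follows as before from finiteness of extrema; one only needs the half-interval lying inside $I$.

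\textbf{Part (b).} Work in the triangle $U$ over the base $[z-\eps,z+\eps]\cap I$. By (1) the base splits into at most two (or, at an endpoint of $I$, one) intervals of strict monotonicity for each of $v(\cdot,0)$ and $w(\cdot,0)$, so there are boundedly many monotonicity-type combinations. I would enumerate cases according to the signs of monotonicity of $v$ and $w$ on the left and right halves, together with which alternative of (2) holds:
\begin{enumerate}
\item If $v(z,0)=w(z,0)$ does not hold, so the base is ``all liquid,'' then on the whole of $U$ we set $v(x,t)=v(x-t,0)$, $w(x,t)=w(x+t,0)$; since $v>w$ persists on $U$ by the uniform gap from (2), \eqref{n10.4}-\eqref{n10.5} hold in the sense of Remark \ref{m28.1}, the constraint \eqref{n10.8} holds, and there is no frozen region so \eqref{a5.4} is vacuous in the interior.
\item If the base is frozen near $z$ (the $v(z,0)=w(z,0)$ case) with, say, $v(\cdot,0)$ and $w(\cdot,0)$ both strictly increasing on $[z-\eps,z+\eps]$, apply Lemma \ref{m22.1}(i) with $y_1=z-\eps$, $y_2=z+\eps$ to produce the freezing curve $\calF=\operatorname{graph} h_\calF$; set the solution frozen, $v=w$ equal to the common value transported horizontally, above $\calF$, and liquid, $v(x,t)=v(x-t,0)$, $w(x,t)=w(x+t,0)$, below $\calF$. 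Continuity across $\calF$ follows from \eqref{m25.2}; the constraint holds because below $\calF$ one has $v(x-t,0)\ge w(x+t,0)$ by the definition of $\calF$ and strict monotonicity, with equality exactly on $\calF$.
\item The thawing cases, where near $z$ the base is frozen but $v(\cdot,0)$ is increasing and $w(\cdot,0)$ is decreasing on one side (or vice versa), are handled by Lemma \ref{m22.1}(ii): its hypotheses about $x\to v(x,z-x)$ being defined, continuous and strictly increasing, and the set being frozen, are exactly what the freezing-curve construction from the adjacent sub-triangle supplies (this is the cross-reference the paper flags after the statement of Lemma \ref{m22.1}). Use $\calT=\operatorname{graph} h_\calT$ to separate $U_r$ into a liquid part (above/right) and frozen part (below/left), and define $v,w$ piecewise as above, transporting the liquid values along slope-$\pm1$ characteristics and keeping the frozen values constant in time.
\item General case: subdivide the base of $U$ at the finitely many points where monotonicity changes or where $v(\cdot,0)-w(\cdot,0)$ passes from $>0$ to $=0$; over each sub-base erect the maximal space-time triangle on which the solution is determined by that sub-base by finite propagation speed (Remark \ref{m28.1}), solve there by one of cases (1)-(3), and then fill in the remaining ``upward'' triangles between them, where freezing/thawing curves from neighboring pieces meet, again by Lemma \ref{m22.1}. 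The number of such pieces is finite because the base has finitely many monotonicity intervals.
\end{enumerate}
In each piece one checks: joint continuity (the pieces agree on shared characteristic segments and on $\calF,\calT$ by construction), the constraint $v\ge w$, the boundary condition \eqref{n10.6} when $z=a_1$ or $a_2$ (the frozen value $v=w$ sits on the vertical boundary), the local equations \eqref{n10.4}-\eqref{n10.5} in every open liquid or frozen sub-rectangle in the sense of Remark \ref{m28.1}, and the no-spurious-solution condition \eqref{a5.4}: one verifies that on each frozen region the preceding liquid region is a genuine thaw-from-below of a strictly decreasing profile, so the restriction in \eqref{a5.4} is met because our liquid region is the unique one consistent with the base data.

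\textbf{Main obstacle.} The technical heart is the bookkeeping of case (4): showing that the finitely many space-time triangles and the freezing/thawing curves produced by Lemma \ref{m22.1} in adjacent triangles genuinely tile $U$ with matching boundary values, that no two freezing curves or a freezing and a thawing curve cross in a way that is not covered by Lemma \ref{m22.1}, and that the slope bounds \eqref{m22.5}, \eqref{m22.6} prevent the curves from escaping the triangle where they were constructed before the pieces are glued. In particular one must confirm that after a thaw the newly liquid region again has a strictly monotone transported profile, so that Lemma \ref{m22.1} can be reapplied finitely often and the induction on the number of monotonicity intervals of the base actually terminates; that consistency check is where essentially all the work lies, the rest being the routine verifications listed above.
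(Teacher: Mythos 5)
Your overall route coincides with the paper's: part (a) from continuity plus finiteness of extrema, and part (b) by enumerating the monotonicity configurations of $v(\cdot,0)$, $w(\cdot,0)$ on the two halves of the base together with which alternative of (2) holds, transporting along slope $\pm1$ characteristics in liquid zones, keeping values constant in time in frozen zones, and invoking Lemma \ref{m22.1}(i) for freezing curves and Lemma \ref{m22.1}(ii) for thawing curves. Your cases (1)--(3) correspond to the paper's cases (i)--(v).

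However, your case (4) and the ``main obstacle'' you identify are misplaced. Under hypotheses (1)--(2) the base $[z-\eps,z+\eps]$ has at most one monotonicity change of each of $v(\cdot,0)$, $w(\cdot,0)$, located at $z$; there are no further subdivision points inside a single $U$, and the tiling and gluing of neighbouring triangles that you describe is not part of this lemma at all --- it is carried out later, in Lemma \ref{j6.1} (agreement of the constructions on overlaps of finitely many triangles) and Theorem \ref{m30.1} (continuation in time). Conversely, the place where this lemma genuinely requires work is the mixed configuration (the paper's case (iv): $v,w$ increasing on $[z-\eps,z]$, $w$ decreasing on $[z,z+\eps]$), which you dispatch in your case (3) by asserting that the hypotheses of Lemma \ref{m22.1}(ii) are ``exactly what the freezing-curve construction supplies.'' That assertion must be proved: using $v(z,0)=w(z,0)$ and \eqref{m22.5} one shows that the right endpoint of $\calF$ is $(z,0)$, hence that the segment $\{(x,z-x):\ z-\eps/2\le x\le z\}$ lies in the frozen zone $\wh U^+$ and that $x\mapsto v(x,z-x)=v\bigl(x-h_\calF(x),0\bigr)$ is strictly increasing there; and one needs \eqref{m22.6} to see that every point of $U_r^+$ lies on a unique slope-one ray emanating from $\calT\cup\bigl([z,z+\eps]\times\{0\}\bigr)$, which is what makes the definition of $v$ in $U_r^+$ well posed. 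Also, in the boundary cases $z=a_1,a_2$ your parenthetical ``the frozen value $v=w$ sits on the vertical boundary'' is not how \eqref{n10.6} is met in general: when $w(\cdot,0)$ is strictly decreasing at $a_1$ the whole triangle is liquid and the boundary condition holds by reflection, $v(x,t)=w(a_1+t-(x-a_1),0)$ for $x<a_1+t$, not by freezing. None of this makes your plan unworkable, but these verifications are the substance of the proof, not the gluing bookkeeping you singled out.
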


\begin{proof}

Part (a) follows easily from our assumptions that $v(x,0)$ and $w(x,0)$ are continuous, have a finite number of local extrema on any finite interval, are strictly monotone between the local extrema, and $v(x,0)\ge w(x,0)$ for all $x$.

(b) 
We will consider different cases of  monotone behavior (increasing or decreasing) of functions $v$ and $w$ on $[z-\eps,z]$ and $[z, z+\eps]$. 
In every case we will define $v(x,t)$ and $w(x,t)$ explicitly. It is easy to check that our definitions agree with the conditions specified in Theorem \ref{m28.4} and \eqref{a5.4}; we leave the verification to the reader.

In cases (i)-(vi) below, we assume that $[z-\eps,z+\eps]\in I$.

Recall Lemma \ref{m22.1} and its notation.

\medskip

(i) Suppose that $\inf_{x\in[z-\eps, z+\eps]}v(x,0)\geq \sup_{x\in[z-\eps, z+\eps]}w(x,0)$.
If we take $[y_1,y_2]=[z-\eps, z+\eps]$ in \eqref{m25.1} then  $\calF $ defined in \eqref{m25.2} is empty or contains one point. For $(x,t)\in U$, we let
$v(x,t) = v(x-t,0)$  and $w(x,t) = w(x+t,0)$.

The whole set $U$ is a liquid zone, i.e. $v(x,t)>w(x,t)$ for all $(x,t)\in U$. The function $v$ propagates along characteristics with the slope 1 and $w$'s characteristics have slope $-1$. 

\medskip

In view of assumption (2) of the lemma, we can assume that $v(z,0) = w(z,0)$ in cases (ii)-(vi) below.

\medskip

(ii) Suppose that
 $v(z,0) = w(z,0)$ and  one of the following conditions (a)-(c) is satisfied.
\smallskip

(ii)(a) $x\to v(x,0)$ is strictly decreasing on $[z-\eps, z+\eps]$.

\smallskip

(ii)(b) $y\to w(y,0)$ is strictly decreasing on $[z-\eps, z+\eps]$.

\smallskip

(ii)(c) $x\to v(x,0)$ is strictly decreasing on $[z-\eps, z]$
and $x\to v(x,0)$ is strictly increasing on $[z, z+\eps]$
and $y\to w(y,0)$ is strictly increasing on $[z-\eps, z]$ and
$y\to w(y,0)$ is strictly decreasing on $[z, z+\eps]$.

\smallskip
Let $[y_1,y_2]=[z-\eps, z+\eps]$ in \eqref{m25.1}.
It is easy to check that in all cases (ii) (a)-(c), the freezing curve $\calF$ is empty or contains one point.

As in case (i), we let
$v(x,t) = v(x-t,0)$  and $w(x,t) = w(x+t,0)$ for $(x,t)\in U$.
Once again,
the whole set $U$ is a liquid zone, i.e. $v(x,t)>w(x,t)$ for all $(x,t)\in U$. The function $v$ propagates along characteristics with the slope 1 and $w$'s characteristics have slope $-1$. 

See \eqref{a5.4} in relation to the construction presented in case (ii).

\medskip

(iii) 
Let $[y_1,y_2]=[z-\eps, z+\eps]$ in \eqref{m25.1}. Suppose that
 $v(z,0) = w(z,0)$ and the functions $x\to v(x,0)$ and $x\to w(x,0)$ are strictly increasing on $[z-\eps, z+\eps]$.  These assumptions imply that
 $\calF$ is non-empty.  
  
We let
$v(x,t) = v(x-t,0)$  and $w(x,t) = w(x+t,0)$ for $(x,t)\in \wh U^-$.
The set $\wh U^-$ is a liquid zone, i.e. $v(x,t)>w(x,t)$ for all $(x,t)\in \wh U^-$. The function $v$ propagates in $\wh U^-$ along characteristics with the slope 1 and $w$'s characteristics have slope $-1$.

For $(x,t)\in \wh U^+$, we find $s\leq t$ such that $(x,s) \in \calF$ and we let $v(x,t)=w(x,t) = v(x,s) =w(x,s)$. 
The set $\wh U^+$ is a frozen zone, i.e. $v(x,t)=w(x,t)$ for all $(x,t)\in \wh U^+$. The functions $v$ and $w$ propagate in $\wh U^+$ along characteristics with the slope 0.

\medskip
(iv) Suppose that 
 $v(z,0) = w(z,0)$, the functions $x\to v(x,0)$ and $x\to w(x,0)$ are strictly increasing on $[z-\eps, z]$ and $x\to w(x,0)$ is strictly decreasing on $[z, z+\eps]$.
 
Let 
\begin{align*}
\wh U&=\{(x,t): t\geq 0, x\in( z-\eps + t , z -t)\}.
\end{align*}
We will apply Lemma \ref{m22.1} (i) to the set $\wh U$ with $[z-\eps,z]$ playing the role of $[y_1,y_2]$.

Case (iv) is illustrated in Fig. \ref{fig19a}.  The triangle with the base $[z-\eps,z+\eps]$ is split by a line with the slope $-1$ passing through $(z,0)$ into a smaller triangle $\wh U$ with the base $[z-\eps,z]$ and the complementary part $U_r$ of $U$. The right endpoints of the freezing curve $\calF$ and thawing curve $\calT$ are at $(z,0)$ because $v(z,0) = w(z,0)$.

\begin{center}
\begin{figure}
\includegraphics[width=0.9\linewidth]{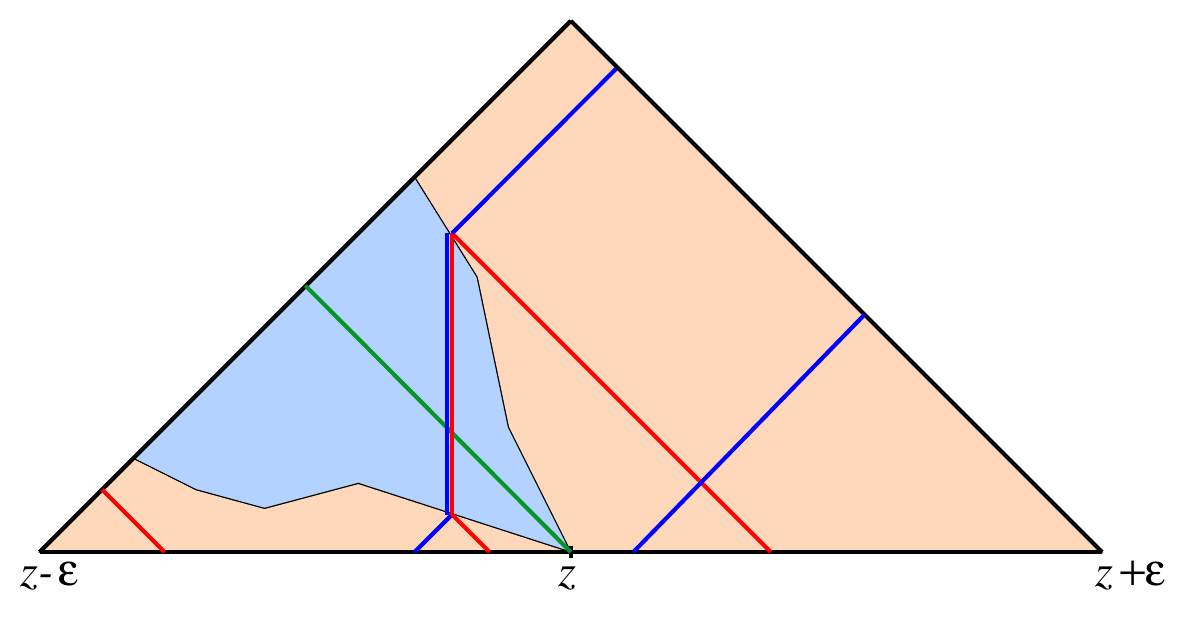}
\caption{
Case (iv) of the proof of Lemma \ref{j6.5}. The green line 
cuts $U$ into a smaller triangle $\wh U$ to the left and $U_r$ to the right. 
The liquid zone is orange and the frozen zone is blue. The right endpoints of the freezing curve $\calF$ and thawing curve $\calT$ are at $(z,0)$ because $v(z,0) = w(z,0)$. Characteristics of $v$ are blue while characteristics of $w$ are red.
  }
\label{fig19a}
\end{figure}
\end{center}

The functions $x\to v(x,0)$ and $x\to w(x,0)$ are strictly increasing on $[z-\eps, z]$.  These assumptions imply that
 $\calF$ is non-empty.  
  
We let
$v(x,t) = v(x-t,0)$  and $w(x,t) = w(x+t,0)$ for $(x,t)\in \wh U^-$.
The set $\wh U^-$ is a liquid zone, i.e. $v(x,t)>w(x,t)$ for all $(x,t)\in \wh U^-$. The function $v$ propagates in $\wh U^-$ along characteristics with the slope 1 and $w$'s characteristics have slope $-1$.

For $(x,t)\in \wh U^+$, we find $s\leq t$ such that $(x,s) \in \calF$ and we let $v(x,t)=w(x,t) = v(x,s) =w(x,s)$. 
The set $\wh U^+$ is a frozen zone, i.e. $v(x,t)=w(x,t)$ for all $(x,t)\in \wh U^+$. The functions $v$ and $w$ propagate in $\wh U^+$ along characteristics with the slope 0.

Since $v(z,0) = w(z,0)$, the right endpoint of the  freezing curve is $(x,0)$. Hence, the set $\{(x,z-x): z-\eps/2\leq x\leq z\}$, the right part of the upper boundary of $\wh U$, belongs to the frozen zone $\wh U^+$. Note that $x\to v(x, z-x)$ is strictly increasing on $[z-\eps/2,z]$.

Recall notation and the claims made in Lemma \ref{m22.1} (ii). 

For $(x,t)\in U_r^-$ we let $v(x,t) = w(x,t)= v(x,z-x)= w(x,z-x)$.

For $(x,t)\in U_r^+$ we let $w(x,t) = w(x+t,0)$.

It follows from \eqref{m22.6} that if $(x,t)\in U_r^+$ then there is a unique point $(x_1,t_1) \in \calT \cup [z,z+\eps]\times \{0\}$ such that $(x,t) = (x_1 + s, t_1+s)$ for some $s\geq 0$. We let $v(x,t) = v(x_1,t_1)$. 

The set $\wh U^- \cup U_r^+$ consisting of two connected components is the liquid zone, 
 i.e. $v(x,t)>w(x,t)$ for all $(x,t)\in \wh U^-\cup U_r^+$.
The function $v$ propagates in $\wh U^- \cup U_r^+$ along characteristics with the slope 1 and $w$'s characteristics have slope $-1$.

The connected set $\wh U^+ \cup U_r^-$ is a frozen zone, i.e. $v(x,t)=w(x,t)$ for all $(x,t)\in \wh U^+ \cup U_r^-$. The functions $v$ and $w$ propagate in $\wh U^+$ along characteristics with the slope 0.

\medskip
(v)
Suppose that $v(z,0) = w(z,0)$, the functions $x\to v(x,0)$ and $x\to w(x,0)$ are strictly increasing on $[z, z+\eps]$ and $x\to v(x,0)$ is strictly decreasing on $[z-\eps, z]$
 This case is the ``mirror image'' of (iv). The solutions $v$ an $w$ can be defined in a way analogous to that in (iv).

\bigskip
The following cases (vi) (a)-(b) cannot occur because they would violate the assumption that $v(x,0)\geq w(x,0)$ for all $x$.

\smallskip
(vi)(a) $v(z,0) = w(z,0)$, the function $x\to v(x,0)$ is strictly increasing on $[z-\eps, z]$ and $y\to w(y,0)$ is strictly decreasing on $[z-\eps, z]$;

\smallskip
(vi)(b) $v(z,0) = w(z,0)$, the function $x\to v(x,0)$ is strictly decreasing on $[z, z+\eps]$ and $y\to w(y,0)$ is strictly increasing on $[z, z+\eps]$.

\bigskip
In the remaining cases we assume that $I=[a_1,a_2]$ and $z=a_1$ or $a_2$.

\bigskip

(vii)  Suppose that $z= a_1 $ and $v( a_1 , 0) = w( a_1 ,0)$. Assume that  the function  $x\to w(x,0)$ is strictly decreasing on $[ a_1 , a_1 + \eps]$.

For all $(x,t) \in U$, we let $w(x,t) = w(x+t,0)$.

For $(x,t) \in U$ such that $x\geq a_1 +t$ we let $v(x,t) = v(x-t,0)$.

For $(x,t) \in U$ such that $x< a_1 +t$ we let $v(x,t) =  v( a_1 ,t-(x-a_1))=w(a_1+ t-(x-a_1),0)$.

The set $ U$ is a liquid zone, i.e. $v(x,t)>w(x,t)$ for all $(x,t)\in \wh U$, $x>a_1$. The function $v$ propagates in $U$ along characteristics with the slope 1 and $w$'s characteristics have slope $-1$.

\bigskip

(viii) Suppose that $z= a_1 $, $v( a_1 ,0) = w( a_1 ,0)$ and the functions $x\to v(x,0)$ and $x\to w(x,0)$ are strictly increasing on $[ a_1 , a_1 + \eps]$.

If the definition of $\wh U$ in \eqref{m25.1} is replaced with 
\begin{align*}
\wh U&=\{(x,t): t\geq 0, x\in( a_1 , a_1+\eps -t)\},
\end{align*}
then the rest of the statement of Lemma \ref{m22.1} (i) and its proof remain valid. Hence, we have a well defined freezing curve $\calF$ and it follows from our current assumptions that $\calF$ is non-empty, it cuts $U$ into two connected sets $\wh U^-$ and $\wh U^+$, and the left endpoint of $\calF$ is $(a_1,0)$.

We let
$v(x,t) = v(x-t,0)$  and $w(x,t) = w(x+t,0)$ for $(x,t)\in \wh U^-$.
The set $\wh U^-$ is a liquid zone, i.e. $v(x,t)>w(x,t)$ for all $(x,t)\in \wh U^-$. The function $v$ propagates in $\wh U^-$ along characteristics with the slope 1 and $w$'s characteristics have slope $-1$.

For $(x,t)\in \wh U^+$, we find $s\leq t$ such that $(x,s) \in \calF$ and we let $v(x,t)=w(x,t) = v(x,s) =w(x,s)$. 
The set $\wh U^+$ is a frozen zone, i.e. $v(x,t)=w(x,t)$ for all $(x,t)\in \wh U^+$. The functions $v$ and $w$ propagate in $\wh U^+$ along characteristics with the slope 0.

\bigskip

The following cases are symmetric to (vii)-(viii).

\bigskip

(ix) 
 Suppose that $z= a_2 $  and $v( a_2 , 0) = w( a_2 ,0)$. Assume that  the function  $x\to v(x,0)$ is strictly increasing on $[ a_2-\eps , a_2]$.

\bigskip

(x)  
Suppose that $z= a_2 $, $v( a_2 ,0) = w( a_2 ,0)$ and the functions $x\to v(x,0)$ and $x\to w(x,0)$ are strictly decreasing on $[ a_2-\eps , a_2]$.

The functions $v$ and $w$ can be defined in these cases in a way analogous to that in (vii)-(viii).

\bigskip
The above exhausts all possible cases. The proof is complete. 
\end{proof}

Recall the concept of a characteristic from Definition \ref{m26.3} and the notation from Lemma 
\ref{j6.5}.

\begin{lemma}\label{m26.1}
Suppose that $v(x,0)$ and $w(x,0)$ satisfy Assumption \ref{m29.2}
and
$v(x,t)$ and $w(x,t)$ are  solutions to \eqref{n10.4}-\eqref{a5.4} in $U$
constructed in Lemma \ref{j6.5}. Then

(i) For every $(x,t)\in U$ there is one or there are two
characteristics of $v$ connecting $(x,t)$ with a point
$(y,0)$ or points $(y_1,0)$ and $(y_2,0)$ for some
$y,y_1,y_2\in [z-\eps,z+\eps]$. The same holds for
$w$. Every characteristic of $v$ is a piecewise linear
function consisting of at most three intervals of
linearity, with the slopes 1 or 0. Every characteristic of
$w$ is a piecewise linear function consisting of at most
three intervals of linearity, with the slopes $-1$ or 0.
Different characteristics of $v$ do not intersect except
that two characteristics may share the same endpoint. The
same remark applies to characteristics of $w$.

(ii)
For every $t\geq 0$, the functions $x\to v(x,t)$ and $x\to w(x,t)$ are Lipschitz with the constant $\lambda $ on the interval where they are defined, i.e., for all $t,x$ and $y$ such that $(x,t),(y,t)\in U$,
\begin{align}\label{m26.2}
|v(x,t)-v(y,t)| \leq \lambda |x-y|,
\qquad
|w(x,t)-w(y,t)| \leq \lambda |x-y|.
\end{align}

(iii) For every $x\in[z-\eps,z+\eps]$, the functions $t\to v(x,t)$ and $t\to w(x,t)$ are Lipschitz with constant $\lambda$ on the interval where they are defined, i.e., for $s,t$ and $x$ such that $(x,s), (x,t)\in U$,
\begin{align}\label{m26.2}
|v(x,t)-v(x,s)| \leq \lambda |s-t|,
\qquad
|w(x,t)-w(x,s)| \leq \lambda |s-t|.
\end{align}

(iv) By assumption, $ v(x,0)$ has a local maximum at $x=z$ or no local maxima in $[z-\eps,z+\eps]$. If it has a local maximum at $x=z$ then for every $t\in[0,\eps/2]$, $x\to v(x,t)$ has a unique local maximum at $z+t$, and for every $t\in[\eps/2,\eps]$, $x\to v(x,t)$ does not have a local maximum in $(z-\eps +t, z+\eps-t)$ (in the interior of the interval where it is defined). 
If $ v(x,0)$ does not have a local maximum then for every $t\in[0,\eps]$, $x\to v(x,t)$ does not have a local maximum in $(z-\eps +t, z+\eps-t)$.
An analogous claim holds for $w$, except that a local minimum should be substituted for a local maximum. 

(v) For any $0\leq t <\eps$, $v(x,t)$ is not constant on any interval of positive length and neither is $w(x,t)$.

\end{lemma}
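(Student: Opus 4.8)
The plan is to prove all five assertions by reading them off the \emph{explicit} construction of $v$ and $w$ carried out case by case in Lemma \ref{j6.5}, using throughout that (by Lemma \ref{j6.5}(a)) on $[z-\eps,z+\eps]$ each of $v(\cdot,0)$ and $w(\cdot,0)$ is strictly monotone on $[z-\eps,z]$ and on $[z,z+\eps]$, so that at most one freezing curve $\calF$ and one thawing curve $\calT$ appear in $U$, with the Lipschitz estimates \eqref{m22.5} and \eqref{m22.6} at our disposal.

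For part (i) I would go through cases (i)--(x). In each, the solution is transported with slope $1$ for $v$ (resp.\ $-1$ for $w$) in the liquid zones and slope $0$ in the frozen zones, the zones being separated by $\calF$ and $\calT$. Tracing a $v$-characteristic backward from $(x,t)$ one leaves the liquid along a slope-$1$ segment until $\calF$ (freeze), continues vertically (slope $0$) through the frozen zone, and, if it re-enters liquid across $\calT$, resumes slope $1$; since there is at most one $\calF$ and one $\calT$ in $U$, this yields at most the three linear pieces $1,0,1$ (resp.\ $-1,0,-1$ for $w$), with endpoint on $\{t=0\}$. Non-crossing follows because two slope-$1$ segments are parallel, two slope-$0$ segments are vertical, and a slope-$1$ segment meeting a slope-$0$ segment is pinned to the single-valued zone boundary; the only coincidences are at shared endpoints, occurring exactly at the isolated coalescence points of equal-valued characteristics (e.g.\ the meeting point of $\calF$ and $\calT$, or an apex of a frozen region), where a point carries two backward characteristics with distinct bases $(y_1,0),(y_2,0)$. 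This bookkeeping, though lengthy, is routine once the zones are identified.

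Parts (ii) and (iii) then follow from the slope picture. In a liquid zone $v(x,t)=v(x-t,0)$, so at fixed $t$ (resp.\ fixed $x$) the $\lambda$-Lipschitz bound is inherited directly from \eqref{m29.3}, and in a frozen zone $t\mapsto v(x,t)$ is constant, giving (iii) piecewise; continuity of the constructed solution glues the monotone pieces into a globally $\lambda$-Lipschitz function of $t$. The one genuinely new point is the spatial bound (ii) across a frozen zone, where $v(x,t)$ equals the value $c(x)$ carried up $\calF$, namely $c(x)=v(x-h_\calF(x),0)=w(x+h_\calF(x),0)$. Differentiating both representations (with $v',w'$ evaluated at the respective base points and $H=h_\calF'(x)$) gives $c'=v'(1-H)=w'(1+H)$, whence
\begin{align*}
(c')^{2}=v'w'\,(1-H^{2})\le v'w'\le\lambda^{2},
\end{align*}
using $|h_\calF'|<1$ from \eqref{m22.5}; thus $|c'|\le\lambda$. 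The same computation with $\calT$ and \eqref{m22.6} handles thawed zones, and a difference-quotient version avoids assuming differentiability, combining the two one-sided $\lambda$-Lipschitz estimates coming from the $v$- and $w$-representations. I expect this geometric-mean bound to be the genuinely clever step, since the naive base-point estimate only gives $2\lambda$.

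Finally, (iv) and (v) are consequences of the decomposition ``liquid $=$ rigid translate, frozen $=$ strictly monotone.'' In a liquid zone $x\mapsto v(x,t)$ is the base profile translated right by $t$, so its only interior local maximum is the translate $z+t$ of the base maximum (when $v(\cdot,0)$ has one at $z$), present precisely while $z+t$ is interior to $(z-\eps+t,z+\eps-t)$, i.e.\ while $t<\eps/2$; in a frozen zone the value equals the strictly monotone function $c(x)$ above, contributing no interior extremum, and continuity across $\calF,\calT$ (where $v=w$) prevents spurious ones. This gives (iv), with the symmetric statement for $w$ (translate left, local minimum). For (v), if $x\mapsto v(x,t)$ were constant on an interval for some $t<\eps$, that interval would lie in a liquid or a frozen zone; in the liquid case $v(\cdot,0)$ would be constant on a translate interval, contradicting strict monotonicity between extrema, and in the frozen case $c(x)$ would be constant, contradicting $(c')^{2}=v'w'(1-H^{2})>0$ (strict, since $v',w'>0$ and $|h_\calF'|<1$). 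The same argument applies to $w$. The main obstacle overall is the organizational burden of part (i)'s case analysis together with the frozen-zone estimate in (ii).
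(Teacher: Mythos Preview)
Your argument for (ii) on the frozen zone is correct and genuinely different from the paper's.  The paper follows the two $v$-characteristics back from $(x,t)$ and $(y,t)$; when the naive base-point comparison fails (because one characteristic stays frozen longer than the other, so the base points spread farther apart than $|x-y|$), it \emph{switches} to the $w$-characteristics emanating from the two freezing points and compares $w$-base-points instead, with a case split on which characteristic froze first deciding which representation of $c$ to use.  Your product identity
\[
|c(x_2)-c(x_1)|^{2}\le \lambda^{2}\,|\Delta x-\Delta h|\,|\Delta x+\Delta h|=\lambda^{2}\bigl((\Delta x)^{2}-(\Delta h)^{2}\bigr)\le \lambda^{2}(\Delta x)^{2}
\]
symmetrizes the two representations and eliminates the case split; the difference-quotient version you mention is exactly the right way to avoid assuming differentiability.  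Parts (i), (iv), (v) are handled essentially as in the paper.

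Two points need tightening.  First, ``in a liquid zone $v(x,t)=v(x-t,0)$'' is true only in the \emph{initial} liquid component $\widehat U^{-}$; in the thawed component $U_r^{+}$ of case~(iv) the backward $v$-characteristic hits $\calT$, not $\{t=0\}$, so there $v(x,t)=c(x_1)$ with $x=x_1+t-h_\calT(x_1)$.  The bound there is \emph{not} ``the same computation with $\calT$'' (the geometric-mean analogue fails since $|h_\calT'|>1$), but rather your already-established $|c'|\le\lambda$ combined with $|dx_1/dx|=|1-h_\calT'(x_1)|^{-1}<1$ from \eqref{m22.6}; this is immediate once stated.  Second, your piecewise argument for (iii) inherits the same issue in $U_r^{+}$.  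The cleanest fix is the paper's: derive (iii) from (ii) via the subsonic property, namely $|v(x,t)-v(x,s)|=|v(\chi_v(s),s)-v(x,s)|\le\lambda|\chi_v(s)-x|\le\lambda|t-s|$.
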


\begin{proof}
(i) This follows directly and easily from the construction of $v$ and $w$ presented in the proof of Lemma \ref{j6.5}. See Fig. \ref{fig19a} for the illustration of the most complicated case.

(ii) 
We will analyze only case (iv) in the proof of Lemma \ref{j6.5}. This is the most complicated case. The other cases can be treated in the same way.  Fig. \ref{fig19a} can help follow our argument.

Suppose that $(x,t),(y,t)\in U$, assume without loss of generality that $x<y$, and consider characteristics $\chi_v^1$  passing through $(x,t)$ and $\chi_v^2$  passing through $(y,t)$.

If $\chi_v^2(0)\geq z$ then the characteristic $\chi_v^2$ is a linear function with slope 1. The characteristic $\chi_v^1$ is a piecewise linear function with slopes 0 and 1 (the number of intervals of linearity can be 1, 2 or 3). 
Therefore, $|x-y| \geq |\chi_v^1(0) -\chi_v^2(0)|$ and, using \eqref{m29.3},
\begin{align*}
|v(x,t)-v(y,t)| = |v(\chi_v^1(0),0) -v(\chi_v^2(0),0)|
\leq \lambda |\chi_v^1(0) -\chi_v^2(0)| \leq \lambda |x-y|.
\end{align*}

Suppose that $\chi_v^1(0)< z$ and $\chi_v^2(0)< z$. 
Since $x<y$, we must have $\chi_v^1(0)< \chi_v^2(0)$.
There are $0\leq t_1\leq t_2 \leq t$ and $0\leq s_1 \leq s_2\leq t$ such that the slope of $\chi_v^1$ is 1 on $(0,t_1)$, 0 on $(t_1,t_2)$ and 1 on $(t_2,t)$. Similarly, the slope of $\chi_v^2$ is 1 on $(0,s_1)$, 0 on $(s_1,s_2)$ and 1 on $(s_2,t)$. Since the thawing curve $\calT$ satisfies \eqref{m22.6}, we must have $t_2 \geq s_2$.

If $t_1 \leq s_1$ then $\chi_v^2(s) - \chi_v^1(s) \geq \chi_v^2(0) - \chi_v^1(0)$ for all $s\in[0,t]$ and, using \eqref{m29.3},
\begin{align}\notag
|v(x,t)-v(y,t)| &= |v(\chi_v^1(0),0) -v(\chi_v^2(0),0)|
\leq \lambda |\chi_v^1(0) -\chi_v^2(0)| \leq \lambda |\chi_v^1(t) -\chi_v^2(t)|\\
& = \lambda |x-y|.\label{a20.1}
\end{align}

Suppose that $t_1 > s_1$.
Our conventions concerning naming of the times $t, t_1 $ and $ s_1$ imply that $t\geq t_1 > s_1$.
 Let $\wt\chi_w^1$ and $\wt\chi_w^2$ be the characteristics of $w$ going from $\chi_v^1(t_1)$ and 
$\chi_v^2(s_1)$ to $[z-\eps,z+\eps]\times \{0\}$. 
These characteristics are linear functions.

Since $t_1 > s_1$, we have
\begin{align*}
|\chi_v^1(s) -\chi_v^2(s)| &\geq |\chi_v^1(t_1) -\chi_v^2(t_1)|
\text{  for  } s\in[t_1,t],\\
|\chi_v^1(s) -\chi_v^2(s)| &\geq |\wt\chi_w^1(s) -\wt\chi_w^2(s)|
\text{  for  } s\in[s_1,t_1],\\
|\wt\chi_w^1(s) -\wt\chi_w^2(s)| &= |\wt\chi_w^1(0) -\wt\chi_w^2(0)|
\text{  for  } s\in[0,s_1].
\end{align*}
Therefore, 
\begin{align}\label{m27.2}
|x-y|=|\chi_v^1(t) -\chi_v^2(t)| \geq |\wt\chi_w^1(0) -\wt\chi_w^2(0)|.
\end{align}

The point $(\chi_v^1(t_1),t_1)$ is on the freezing curve so
\begin{align}\label{m27.1}
w(\wt\chi_w^1(t_1),t_1) = v(\wt\chi_w^1(t_1),t_1)=v(\chi_v^1(t_1),t_1).
\end{align}

Since $\chi_v^1(0)< \chi_v^2(0)< z$ and $r\to v(r,0)$ is increasing, $v(x,t)= v(\chi_v^1(0),0)< v(\chi_v^2(0),0)=v(y,t)$. 
Recall that, by definition, $\wt\chi^2_w(s_1) = \chi^2_v(s_1)$.
These remarks, \eqref{m27.2} and \eqref{m27.1} imply that
\begin{align*}
|v(x,t)-v(y,t)| &= v(y,t) - v(x,t) =  v(\chi_v^2(s_1),s_1) -v(\chi_v^1(t_1),t_1)\\
&= w(\wt\chi_w^2(s_1),s_1) -w(\wt\chi_w^1(t_1),t_1)
= w(\wt\chi_w^2(0),0) -w(\wt\chi_w^1(0),0)\\
&\leq |w(\wt\chi_w^2(0),0) -w(\wt\chi_w^1(0),0)|
\leq \lambda |\wt\chi_w^2(0) -\wt\chi_w^1(0)|  \leq \lambda |x-y|.
\end{align*}
This and \eqref{a20.1} prove the first of the two inequalities in \eqref{m26.2}.

Next consider the function $x\to w(x,t)$ for a fixed $t$. Let $x_0$ be such that $(x_0,t) \in \calT$. It will suffice to prove \eqref{m26.2} for $w$ separately on each of the two intervals: $[z-\eps+t, x_0]$ and $[x_0, z+\eps-t]$. 

Let  $\chi_w^1$ and $\chi_w^2$ be characteristics of $w$ connecting $(x,t)$ and $(y,t)$ with $[z-\eps,z+\eps]\times \{0\}$.

If $x_0\leq x < y$ then the functions  
$\chi_w^1$ and $\chi_w^2$ are linear with the slope $-1$. 
Therefore, $|x-y| = |\chi_w^1(0) -\chi_w^2(0)|$ and
\begin{align*}
|w(x,t)-w(y,t)| = |w(\chi_w^1(0),0) -w(\chi_w^2(0),0)|
\leq \lambda |\chi_w^1(0) -\chi_w^2(0)| = \lambda |x-y|.
\end{align*}

If $ x < y\leq x_0$ then
there are $0\leq t_1\leq t$ and $0\leq s_1 \leq s$ such that the slope of $\chi_w^1$ is $-1$ on $(0,t_1)$ and 0 on $(t_1,t)$. Similarly, the slope of $\chi_w^2$ is $-1$ on $(0,s_1)$ and 0 on $(s_1,s)$. 
This case can be further split into subcases when $t_1\geq s_1$ and $t_1< s_1$. One can now prove \eqref{m26.2} for $w$ just like it was done in the proof of \eqref{m26.2} for $v$.

(iii)
Suppose that $(x,s), (x,t)\in U$ and assume without loss of generality that $s< t$. Let 
$\chi_v$ be the characteristic of $v$ connecting $(x,t)$ with $[z-\eps,z+\eps]\times \{0\}$. Since the slope of $\chi_v$ is either 0 or 1, $|\chi_v(s) - x| \leq t-s$. We use \eqref{m26.2} to conclude that 
\begin{align*}
|v(x,t)-v(x,s)| = |v(\chi_v(s),s)-v(x,s)| \leq \lambda |\chi_v(s) - x| \leq \lambda |s-t|.
\end{align*}
The proof for $w$ is analogous.

(iv) 
Suppose that  $x\to v(x,0)$ has no local maxima in $[z-\eps,z+\eps]$. The  characteristics of $v$ from different points in $U$ either do not intersect or one of them contains the other one. This implies that 
for every $t\in[0,\eps]$, $x\to v(x,t)$ does not have a local maximum in $(z-\eps +t, z+\eps-t)$.

Next suppose that $x\to v(x,0)$ has a local maximum at $z$. Then we have case (iv) of the proof of Lemma \ref{j6.5} with the extra assumption that $x\to v(x,0)$ is strictly decreasing on $[z,z+\eps]$. For $t\in [0,\eps/2]$ and $x\geq z+t$, $v(x,t) =v(x-t,0)$. Hence, $x\to v(x,t)$ is strictly decreasing for $x\geq z+t$. For $x\leq z+t$, the characteristic from $(x,t)$ has an endpoint at $(y,0)$ with $y\leq z$. Characteristics starting at different points $(x_1,t)$ and $(x_2,t)$ do not intersect. Since $x\to v(x,0)$ is strictly increasing on $[z-\eps,z]$, it follows that  $x\to v(x,t)$ is strictly increasing for $x\leq z+t$. We see that $v(x,t)$ has a unique local maximum at $z+t$.

The proof of (iv) for $w$ is similar.

(v) 
Once again, consider case (iv) of the proof of Lemma \ref{j6.5}.
Recall that $x\to v(x,0)$ and $x\to w(x,0)$ are strictly monotone
on $[z-\eps,z]$ and $[z,z+\eps]$. Note that different characteristics of $v$ are
either disjoint or one is contained in the other. Characteristics of $w$ are either disjoint, or one is contained in the other, or they have a common endpoint in $\calT$. All of this implies that for  any $0\leq t <\eps$, $v(x,t)$ is not constant on any interval of positive length and neither is $w(x,t)$.
\end{proof}

\begin{lemma}\label{j6.1}
If Assumption \ref{m29.2} is satisfied
then for every finite interval $[a_3,a_4]\subset I$\red{,} there exists $t_1>0$ such that there is a  solution to 
\eqref{n10.4}-\eqref{a5.4} in $[a_3,a_4]\times [0,t_1]$.

\end{lemma}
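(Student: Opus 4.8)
The plan is to patch together the local solutions provided by Lemma \ref{j6.5} using a compactness argument. First I would fix a finite interval $[a_3,a_4]\subset I$. For each point $z\in[a_3,a_4]$, part (a) of Lemma \ref{j6.5} produces a radius $\eps_z>0$ such that either $[z-\eps_z,z+\eps_z]\subset I$, or $I=[a_1,a_2]$ and $z$ is an endpoint, and such that conditions (1) and (2) of that lemma hold. Then part (b) gives a continuous solution to \eqref{n10.4}-\eqref{a5.4} in the space-time triangle $U_{z,\eps_z}=\{(x,t):t\geq 0,\ x\in(z-\eps_z+t,\ z+\eps_z-t)\cap I\}$. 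The open intervals $(z-\eps_z,z+\eps_z)\cap I$, $z\in[a_3,a_4]$, cover the compact set $[a_3,a_4]$ (with the understanding that at the endpoints of $I$ we use one-sided intervals), so finitely many of them suffice: there are $z_1,\dots,z_k$ with $[a_3,a_4]\subset\bigcup_{j=1}^k (z_j-\eps_{z_j},z_j+\eps_{z_j})\cap I$.

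Next I would pick $t_1>0$ small enough that the horizontal slice $[a_3,a_4]\times\{t_1\}$, and hence the whole slab $[a_3,a_4]\times[0,t_1]$, is contained in $\bigcup_{j=1}^k U_{z_j,\eps_{z_j}}$. Such a $t_1$ exists because each $U_{z_j,\eps_{z_j}}$ is an open (relative to $I\times[0,\infty)$) neighborhood of its base and the finitely many bases already cover $[a_3,a_4]$; a concrete choice is $t_1 = \tfrac12\,\dist(a_3,a_4\text{-complement})$-type estimate, but more simply: since the finite union of the open bases contains the compact $[a_3,a_4]$, there is a Lebesgue-number-type $\delta>0$ with the property that for every $x\in[a_3,a_4]$ some $j$ has $(x-\delta,x+\delta)\subset(z_j-\eps_{z_j},z_j+\eps_{z_j})$; then $(x,t)\in U_{z_j,\eps_{z_j}}$ whenever $t<\delta$, so $t_1:=\delta$ works. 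On this slab we now have, for each point, at least one triangle $U_{z_j,\eps_{z_j}}$ containing it carrying a solution.

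The content of the argument is that these local solutions agree on overlaps, so they glue to a single continuous function. On a nonempty intersection $U_{z_i,\eps_{z_i}}\cap U_{z_j,\eps_{z_j}}$, both solutions are continuous, satisfy \eqref{n10.4}-\eqref{n10.6}, agree with the common initial data $v(\cdot,0),w(\cdot,0)$ on the base of the intersection (which is a subinterval of $I$), and—crucially—both satisfy the selection condition \eqref{a5.4}, since the solutions built in Lemma \ref{j6.5} are either liquid or frozen on each triangle and the thaw/freeze structure is the one forced by \eqref{a5.4}. One then invokes the domain-of-dependence/finite-propagation-speed principle built into \eqref{a5.4} applied within the intersection triangle: the characteristics have speed at most $1$, so the values of $v,w$ at a point of the intersection depend only on the initial data at the base below it, and \eqref{a5.4} pins down those values uniquely. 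Hence the two solutions coincide throughout the overlap, and the common extension is well-defined and continuous on $[a_3,a_4]\times[0,t_1]$; it satisfies \eqref{n10.8}-\eqref{a5.4} and, on any open liquid or frozen subset, \eqref{n10.4}-\eqref{n10.5}, because these are local conditions verified inside each triangle.

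The main obstacle is the overlap-consistency step: one must confirm that \eqref{a5.4} is strong enough to force agreement of two a priori different local constructions on a shared triangle, i.e. that finite propagation speed plus the "freeze only when forced, thaw as soon as possible" rule of \eqref{a5.4} yields a genuine uniqueness statement on the small triangle $U_{z_i,\eps_{z_i}}\cap U_{z_j,\eps_{z_j}}$. This is essentially the local uniqueness assertion that underlies Theorem \ref{m28.5}, and I would either cite it directly (if available at this point) or argue it by hand: in each of the monotonicity cases of Lemma \ref{j6.5} the freezing curve $\calF$ and thawing curve $\calT$ are uniquely determined by the initial data through \eqref{m25.2} and \eqref{a28.1}, so the decomposition of the triangle into liquid and frozen zones, and the values of $v,w$ on each, are the same for both constructions. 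The remaining details (joint continuity across the seams, verification of \eqref{n10.8}-\eqref{a5.4} for the glued function) are routine, since each piece already has these properties and they are local.
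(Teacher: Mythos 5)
Your proposal follows the paper's proof essentially verbatim: a compactness/finite-cover argument producing overlapping triangles from Lemma \ref{j6.5}, a Lebesgue-number choice of $t_1$ so that $[a_3,a_4]\times[0,t_1]$ lies in the union, and agreement of the local constructions on overlaps (which the paper simply asserts is ``tedious but totally elementary to check''). The one caution is that your first option for the overlap step---citing the uniqueness of Theorem \ref{m28.5}---would be circular at this point, since that theorem is proved later and requires the subsonic-backward-characteristic hypothesis rather than \eqref{a5.4} alone; your fallback of checking directly that the freezing and thawing curves \eqref{m25.2}, \eqref{a28.1} are determined by the initial data is the correct (and the paper's) route.
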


\begin{proof}
By compactness, there is a finite number of points $z_k\in[a_3-1,a_4+1]\cap I$  and $\eps_k>0$ satisfying Lemma \ref{j6.5} (a), such that 
$[a_3-1,a_4+1]\cap I\subset \bigcup_{k=1}^n (z_k-\eps_k/2, z_k+\eps_k/2)$.
Let $U_k$ be the triangle with parameters $z_k$ and $\eps_k$, defined as in Lemma \ref{j6.5} (b).
Since $U(z_k)$'s are overlapping triangles, there exists $t_1>0$ such that 
$[a_3,a_4]\times [0,t_1] \subset \bigcup_{k=1}^n  U_k$.

It is tedious but totally elementary to check that solutions constructed in 
the proof of Lemma \ref{j6.5} (b) agree on the intersections of $U_k$'s.
\end{proof}

\begin{theorem}\label{m30.1}
If Assumption \ref{m29.2} is satisfied then there exist jointly continuous functions $v(x,t)$ and $w(x,t)$ with $x\in I$, $t\geq 0$, such that \eqref{n10.8}-\eqref{a5.4} are satisfied, and for every open set $D\subset I \times(0,\infty)$, if $v(x,t)> w(x,t)$ for all $(x,t) \in D$ or $v(x,t)= w(x,t)$ for all $(x,t) \in D$ 
then \eqref{n10.4}-\eqref{n10.5} are satisfied in $D$.
\end{theorem}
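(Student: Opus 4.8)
The plan is to globalize the local construction from Lemmas \ref{j6.5}--\ref{j6.1} by a covering/extension argument, using the finiteness of the speed of propagation as the organizing principle. First I would fix any finite interval $[a_3,a_4]\subset I$ and argue that the solution can be continued in time indefinitely on the shrinking triangle with base $[a_3,a_4]$. Lemma \ref{j6.1} gives a solution on $[a_3,a_4]\times[0,t_1]$ for some $t_1>0$. The key point is that the hypotheses of Assumption \ref{m29.2} are \emph{preserved} by the solution: Lemma \ref{m26.1}(ii)--(iii) gives the uniform Lipschitz bound with the same constant $\lambda$ at every later time slice, and Lemma \ref{m26.1}(iv)--(v) shows that $x\to v(x,t)$ and $x\to w(x,t)$ still have only finitely many local extrema on finite intervals and are nonconstant on intervals. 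Thus I can restart the construction from time $t_1$ (treating $v(\cdot,t_1)$, $w(\cdot,t_1)$ as new initial data on the interval $[a_3+t_1, a_4-t_1]$, noting the constraint $v\ge w$ and the boundary conditions are inherited) and obtain a solution up to some $t_2>t_1$ on the correspondingly narrower triangle, and iterate. A compactness/uniform-Lipschitz argument shows the time increments can be taken bounded below on any compact portion, so after finitely many steps one covers $[a_3',a_4']\times[0,T]$ for any $T$ and any $[a_3',a_4']\subset(a_3,a_4)$; letting the interval and $T$ grow exhausts $I\times[0,\infty)$.

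Second, I would verify consistency: the solutions built on different space--time triangles (or by different restart schedules) agree on overlaps. This is exactly the ``tedious but elementary'' point already invoked in Lemma \ref{j6.1}: the values of $v$ and $w$ at a point $(x,t)$ are determined, via the explicit formulas in the proof of Lemma \ref{j6.5}, by the restriction of the initial data to the base of the backward light cone through $(x,t)$, so any two constructions covering a common open set produce the same function. Patching then yields globally defined jointly continuous $v,w$ on $I\times[0,\infty)$. Joint continuity follows from the combination of Lemma \ref{m26.1}(ii) (Lipschitz in $x$, uniformly in $t$) and Lemma \ref{m26.1}(iii) (Lipschitz in $t$, uniformly in $x$), via the triangle inequality $|v(x,t)-v(y,s)|\le \lambda(|x-y|+|t-s|)$; the same for $w$. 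The constraint \eqref{n10.8}, the boundary condition \eqref{n10.6}, and the local equations \eqref{n10.4}--\eqref{n10.5} on any open $D$ where $v>w$ or $v=w$ all hold because they hold on each triangle of the cover and these are local statements.

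Third, I would check \eqref{a5.4}, the ``minimality of freezing'' condition. On each triangle this is built into the construction of Lemma \ref{j6.5} (the explicit choices there thaw as early as possible and freeze only on $\calF$), and by the localization principle above, if some $\tilde v,\tilde w$ satisfy \eqref{n10.4}--\eqref{n10.6} with $\tilde v>\tilde w$ on the interior of a triangle $U$ and match $v,w$ on the base, then both agree with the liquid solution $v(x,t)=v(x-t+\text{(base point)},\,\cdot\,)$ dictated by the characteristics of slope $\pm 1$, hence $v=\tilde v$, $w=\tilde w$ there; this is the statement \eqref{a5.4}.

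The main obstacle I expect is the \textbf{restart step}: making rigorous that a solution on $[a_3,a_4]\times[0,t_1]$ produced by Lemma \ref{j6.1} has time-$t_1$ slices that again satisfy Assumption \ref{m29.2} with a uniform Lipschitz constant, so that the construction can be iterated without the time increments collapsing to zero. Lemma \ref{m26.1} is stated for the single-triangle solutions of Lemma \ref{j6.5}, so one must first argue (again by the localization principle) that its conclusions transfer to the patched solution on $[a_3,a_4]\times[0,t_1]$, and then that the number of local extrema of $v(\cdot,t_1)$ on $[a_3,a_4]$ stays finite---which follows from Lemma \ref{m26.1}(iv) since extrema can only be created at the distinguished points $z_k$ and only finitely many are relevant. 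Once the preservation of hypotheses is secured, the iteration and the compactness bookkeeping are routine.
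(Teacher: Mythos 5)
Your overall strategy (local existence via Lemmas \ref{j6.5}--\ref{j6.1}, then continuation in time using the fact that the time slices of the constructed solution again satisfy Assumption \ref{m29.2} with the same Lipschitz constant and no more local extrema) is the same as the paper's. But there is a genuine gap at the continuation step. You assert that ``a compactness/uniform-Lipschitz argument shows the time increments can be taken bounded below on any compact portion, so after finitely many steps one covers $[a_3',a_4']\times[0,T]$.'' This is not justified and is the crux of the whole proof. The time step produced by Lemma \ref{j6.1} is governed by the radii $\eps_k$ of Lemma \ref{j6.5}(a), and those radii depend on the spacing of the local extrema of $v(\cdot,t)$, $w(\cdot,t)$ \emph{and} on the geometry of the contact set $\{v(\cdot,t)=w(\cdot,t)\}$ (condition (2) of Lemma \ref{j6.5}(a) requires either a uniform gap $\inf v\ge\sup w$ on the interval or an exact touching point at the center). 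As the solution evolves, extrema migrate and merge and the frozen set changes, so there is no a priori lower bound on the admissible $\eps$'s at later times in terms of the initial data; nothing rules out the restart times accumulating at a finite $T$.

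The paper does not claim such a lower bound. Instead it runs a maximal-time argument: let $T$ be the supremum of times up to which the solution exists with the properties of Lemma \ref{m26.1}(i)--(iii) and with the count $N^b_t$ of local maxima of $v$ and local minima of $w$ non-increasing; if $T<\infty$, the uniform Lipschitz bounds of Lemma \ref{m26.1}(ii)--(iii) give uniform limits $v(\cdot,T)$, $w(\cdot,T)$, the monotonicity of $N^b_t$ forces the limit functions to have finitely many extrema, and a separate characteristics argument shows they are nowhere locally constant; hence Assumption \ref{m29.2} holds at time $T$ and the solution continues past $T$, a contradiction. Your proposal would need exactly this limit-at-$T$ step (in particular the global monotone quantity $N^b_t$, not just the one-step statement of Lemma \ref{m26.1}(iv), and the non-constancy of the limit slices) to close the argument; calling the bookkeeping ``routine'' conceals the main difficulty. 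A secondary, repairable difference: for $I=\R$ the paper localizes by modifying the initial data outside $[-b,b]$ as in \eqref{a13.1}--\eqref{a13.2} so that the solution is purely liquid far from the origin and is defined on all of $\R$ for each $b$, then lets $b\to\infty$; your shrinking-triangle exhaustion can be made to work but must be set up so the restarted data on the shrunken base is genuinely all that is needed (finite propagation speed), which you do invoke correctly.
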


\begin{proof}

First we will assume that $I = \R$ because this case requires ``localization.'' 
Given initial conditions $v(x,0)$ and $w(x,0)$, we define modified initial conditions
$v^b(x,0)$ and $w^b(x,0)$ for every $b>0$ by
\begin{align}\label{a13.1}
v^b(x,0)&=
\begin{cases}
v(x,0) & \text{  for  } -b \leq x \leq b,\\
v(-b,0) +\lambda|x-b| & \text{  for  }  x \leq -b,\\
v(b,0) +\lambda|x-b| & \text{  for  }  x \geq b,
\end{cases}
\\
w^b(x,0)&=
\begin{cases}
w(x,0) & \text{  for  } -b \leq x \leq b,\\
w(-b,0) -\lambda|x-b| & \text{  for  }  x \leq -b,\\
w(b,0) -\lambda|x-b| & \text{  for  }  x \geq b.
\end{cases}\label{a13.2}
\end{align}
It is easy to see that for each $b>0$, the initial conditions $v^b(x,0)$ and $w^b(x,0)$ satisfy Assumption \ref{m29.2}. By Lemma \ref{j6.1}, for some $t_b\in(0,b)$, there exist  solutions $v^b(x,t)$ and $w^b(x,t)$ to 
\eqref{n10.4}-\eqref{a5.4} in $[-4b,4b]\times [0,t_b]$.

By Assumption \ref{m29.2}, $v(x,0)$ and $w(x,0)$ are Lipschitz with constant $\lambda$ so
\begin{align*}
\sup_{-b\leq x\leq b} v^b(x,0) - \inf_{-b\leq x\leq b} v^b(x,0)\leq 2 \lambda b,\\
\sup_{-b\leq x\leq b} w^b(x,0) - \inf_{-b\leq x\leq b} w^b(x,0)\leq 2 \lambda b.
\end{align*}
This and \eqref{a13.1} imply that
\begin{align*}
\inf_{ x\leq-3b} v^b(x,0)&= 3 \lambda b + v^b(-b,0) 
\geq 2 \lambda b + \inf_{-b\leq x\leq b} v^b(x,0) 
\geq 2 \lambda b + \sup_{-b\leq x\leq b} v^b(x,0) -2\lambda b\\
&\geq 2 \lambda b + \sup_{-b\leq x\leq b} w^b(x,0) -2\lambda b
\geq 2 \lambda b + w^b(-b,0)-2\lambda b \\
&= 2 \lambda b + 2\lambda b+ \sup_{ x\leq -3b} w^b(x,0) -2\lambda b 
= \sup_{ x\leq -3b} w^b(x,0) +2\lambda b.
\end{align*}
Hence $v^b(x,t) = v^b(x-t,0)>w^b(x+t,0) = w^b(x,t)$ for $x\leq -3b$ and $t\in[0,b]$. For a similar reason, 
$v^b(x,t) = v^b(x-t,0)$ and $w^b(x,t) = w^b(x+t,0)$ for $x\geq 3b$ and $t\in[0,b]$.
Since $t_b < b$, there exist  solutions $v^b(x,t)$ and $w^b(x,t)$ to 
\eqref{n10.4}-\eqref{a5.4} in $\R\times [0,t_b]$.

It follows from our constructions that the solutions $v^b(x,t)$ and $w^b(x,t)$ satisfy conditions in Lemma \ref{m26.1} (i)-(iii) in $\R\times [0,t_b]$.

Let $N^b_t$ be the number of local maxima of $x\to v^b(x,t)$ and local minima of $x\to w^b(x,t)$. In view of our construction of $v^b(x,0)$ and $w^b(x,0)$, $N^b_0$ is finite. Lemma \ref{m26.1} (iv) implies that $t\to N^b_t$ is non-increasing on $[0,t_b]$.

Let $T$ be the supremum of $u$  
such that there exist solutions to 
\eqref{n10.4}-\eqref{a5.4} in $\R\times [0,u]$
satisfying conditions in Lemma \ref{m26.1} (i)-(iii) and such that $t\to N^b_t$ is non-increasing on $[0,T]$.
Note that $T\geq t_b >0$.

Suppose that $T<\infty$.
For every $t<T$, the function $x\to v^b(x,t)$ is Lipschitz continuous with constant $\lambda$ by Lemma \ref{m26.1} (ii).
By Lemma \ref{m26.1} (ii)-(iii), the function $(x,t)\to v^b(x,t)$ is Lipschitz continuous with constant $2\lambda$ on $\R\times[0,T)$. This  implies that $v^b(x,T) := \lim_{t\uparrow T} v^b(x,t)$ exists and is a Lipschitz function with constant $\lambda$.
Similarly, $w^b(x,T) := \lim_{t\uparrow T} w^b(x,t)$ exists and is a Lipschitz function with constant $\lambda$.

We have shown that $N^b_t$ is bounded on $[0,T)$. The limits $v^b(x,T) := \lim_{t\uparrow T} v^b(x,t)$ and $w^b(x,T) := \lim_{t\uparrow T} w^b(x,t)$ are uniform so $N^b_T \leq \lim_{t\uparrow T} N^b_t < \infty$.
Since $N^b_t$ is bounded on $[0,T]$, the total number of local maxima of $x\to v^b(x,T)$ and local minima of $x\to w^b(x,T)$ is finite. Hence, the total number of local extrema of $x\to v^b(x,T)$ and  $x\to w^b(x,T)$ is finite.

By assumption, $v^b(x,0)$ and $w^b(x,0)$ can take a given value only at a finite number of $x$.
Neither of the functions $x\to v^b(x,T)$ and  $x\to w^b(x,T)$ is constant on any interval of positive length because otherwise we would have an arbitrarily large number of characteristics with ends at the same point of $\R\times[0,\infty)$.

We see that $x\to v^b(x,T)$ and  $x\to w^b(x,T)$ satisfy Assumption \ref{m29.2}. Hence, for some $\delta >0$, we can find solutions $v^b(x,t)$ and $w^b(x,t)$ for $x\in \R$ and $t\in[T,T+\delta]$. This contradicts the definition of $T$ so we conclude that $T=\infty$.

We have proved the theorem for the set of initial conditions $v^b(x,0)$ and $w^b(x,0)$ for any $b>0$. If $0<b<c$ then
the solutions $v^b(x,t)$ and $w^b(x,t)$  agree with $v^c(x,t)$ and $w^c(x,t)$
on the triangle $\{(x,t): -b+t<x<b-t\}$. Hence, we can define $v(x,t)=\lim_{b\to\infty} v^b(x,t)$ and $w(x,t)=\lim_{b\to\infty} w^b(x,t)$ for all $x\in\R$ and $t\geq 0$. It is easy to see that thus defined $v(x,t)$ and $w(x,t)$ satisfy the theorem.

The case when $I=[a_1,a_2]$ can be dealt with in a similar but simpler way because we do not need ``localization.''
\end{proof}

\begin{remark}
Recall remarks from Section \ref{sec:char} about points where four characteristics meet, illustrated by the top transition point in Fig. \ref{fig15}. This type of transition does not appear in Lemmas \ref{m22.1} and \ref{j6.5}. One can explain its absence in the following way. 
A point of this type can appear at time $T<\infty$ in the proof of Theorem \ref{m30.1}. But the proof shows that $x\to v^b(x,T)$ and  $x\to w^b(x,T)$ satisfy Assumption \ref{m29.2} so there is no need to provide a separate discussion of points where four characteristics meet.
\end{remark}

\section{General initial conditions}\label{sec:gen}

The goal of this section is to construct solutions to \eqref{n10.4}-\eqref{a5.4} for any continuous initial conditions. We will start with some definitions that apply even to arbitrary measurable initial conditions $v(x,0)$ and $w(x,0)$.

We will use the convention that $\inf \emptyset = -\infty$ and $\sup\emptyset=\infty$.

Suppose that $b\in \R$, $v(x,0)$ and $w(x,0)$ are measurable functions and $v(x,0)\geq w(x,0)$ for all $x$. 

If  $I=\R$ then  for $x\in \R$, $t\geq 0$ and  $b\in \R$, let
\begin{align} \label{j17.1a} 
\alpha(v,b,x) &= \inf\left\{y\in I:
y\geq x, \int_x^y \left(\bone_{\{v(z,0) \leq b\}} - \bone_{\{w(z,0) \geq b\}}\right)dz <0
\right\},\\
\alpha(w,b,x) &= \sup\left\{y\in I:
y\leq x, \int_y^x \left(\bone_{\{v(z,0) \leq b\}} - \bone_{\{w(z,0) \geq b\}}\right)dz >0
\right\}, \label{j17.2a} \\
 \label{j17.5a} 
A(v,b,t)&= \{z\in I: z = x+t, v(x,0)\leq b, t\leq (\alpha(v,b,x)-x)/2\} ,\\
A(w,b,t)&= \{z\in I: z = x-t, w(x,0)\geq b, t\leq (x-\alpha(w,b,x))/2\}. \label{j17.6a} 
\end{align}

If $I=[a_1,a_2]$ then for $x\in \R$, $t\geq 0$ and  $b\in \R$, let
\begin{align} \label{j17.3} 
&\alpha(v,b,x)\\
 &= \inf\left\{
y\geq x: \int_x^y \left(\bone_{\{z\in I,v(z,0) \leq b\}} + \bone_{\{z\leq a_1\}}
- \bone_{\{z\in I,w(z,0) \geq b\}}- \bone_{\{z\geq a_2\}}\right)dz <0
\right\},\notag\\
&\alpha(w,b,x) \label{j17.4}  \\
 &= \sup\left\{
y\leq x: \int_y^x \left(\bone_{\{z\in I,v(z,0) \leq b\}} +\bone_{\{z\leq a_1\}}
- \bone_{\{z\in I,w(z,0) \geq b\}}
-\bone_{\{z\geq a_2\}}\right)dz >0
\right\},  \notag
\end{align}
\begin{align}
\label{j18.5} 
A(v,b,t)&= \{z\in I:z=x+t, t\leq (\alpha(v,b,x)-x)/2 \text{  and  }( v(x,0)\leq b  \text{  or  }
x \leq a_1)\} , \\
A(w,b,t)&= \{z\in I:z=x-t, t\leq (x-\alpha(w,b,x))/2\label{j18.6} 
 \text{  and  }( w(x,0)\geq b  \text{  or  } x \geq a_2)\} . 
\end{align}

Note that if $x,y\in I$ then  $y=\alpha(v,b,x)$ if and only if $x=\alpha(w,b,y)$.

Let
\begin{align}
v_*(x,t) & = \inf\{b\in\R: x\in A(v,b,t)\}, \qquad x\in I, t\geq 0, \label{j17.7a} \\
w_*(x,t) & = \sup\{b\in\R: x\in A(w,b,t)\}, \qquad x\in I, t\geq 0. \label{j17.8a} 
\end{align}

\begin{remark}\label{a24.1}
(i)
Formulas \eqref{j17.1a}-\eqref{j17.8a} uniquely define measurable functions $v_*(x,t)$ and $w_*(x,t)$ for any measurable $v(x,0)$ and $w(x,0)$.

(ii)
It is easy to check that $b\to \alpha(v,b,x)$ is non-decreasing and $b\to \alpha(w,b,x)$ is non-increasing. This implies that $b\to A(v,b,t)$ is non-decreasing and $b\to A(w,b,t)$ is non-increasing.

(iii)
In the case $I=[a_1,a_2]$ one can extend $v(x,0)$ and $w(x,0)$ to the whole real line by declaring that $v(x,0)=w(x,0) = -\infty$ for $x< a_1$ and  $v(x,0)=w(x,0) = \infty$ for $x> a_2$. Then  \eqref{j17.1a}-\eqref{j17.6a} applied to thus extended initial conditions yield the same formulas as in \eqref{j17.3}-\eqref{j18.6}.

\end{remark}

\begin{lemma}\label{a22.6}
If $v$ and $w$ are the solutions to \eqref{n10.4}-\eqref{a5.4} in $I\times [0,\infty)$ constructed in Theorem \ref{m30.1} under Assumption \ref{m29.2} then
 $v_*(x,t)=v(x,t)$ and $w_*(x,t)=w(x,t)$ for all $x\in I$ and $t\geq 0$.
\end{lemma}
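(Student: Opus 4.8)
The plan is to identify the sets $A(v,b,t)$ and $A(w,b,t)$ with level sets of the solution $v,w$ constructed in Theorem~\ref{m30.1}: I will show that
\begin{equation}\label{e:levelsets}
A(v,b,t)=\{x\in I: v(x,t)\le b\}
\quad\text{and}\quad
A(w,b,t)=\{x\in I: w(x,t)\ge b\}
\end{equation}
for every $b\in\R$ and $t\ge0$ (only the behaviour for $b\neq v(x,t)$, resp.\ $b\neq w(x,t)$, will actually be used). Granting \eqref{e:levelsets}, the lemma follows at once from \eqref{j17.7a}--\eqref{j17.8a}: $v_*(x,t)=\inf\{b:x\in A(v,b,t)\}=\inf\{b:v(x,t)\le b\}=v(x,t)$, and likewise $w_*(x,t)=\sup\{b:x\in A(w,b,t)\}=w(x,t)$.

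The argument for \eqref{e:levelsets} rests on two structural facts. The first is \emph{finite propagation speed on both sides}: unwinding \eqref{j17.1a} and \eqref{j17.5a} one sees that, for $z\in I$, the statement ``$z\in A(v,b,t)$'' is equivalent to ``$v(z-t,0)\le b$ and $\int_{z-t}^{y}\bigl(\bone_{\{v(\zeta,0)\le b\}}-\bone_{\{w(\zeta,0)\ge b\}}\bigr)\,d\zeta\ge0$ for all $y\in[z-t,z+t]$'' (with the obvious modification from \eqref{j17.3}--\eqref{j18.6} when $I=[a_1,a_2]$, or, by Remark~\ref{a24.1}(iii), after extending the data by $\mp\infty$ outside $[a_1,a_2]$); hence whether $z\in A(v,b,t)$, and therefore $v_*(z,t)$, depends only on $v(\cdot,0),w(\cdot,0)$ restricted to $[z-t,z+t]$. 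The constructed solution has the same property, being built inside the space-time triangles of Lemma~\ref{j6.5}. The second fact is that the family $A(v,b,\cdot),A(w,b,\cdot)$ obeys a \emph{time-autonomous semigroup law}: $A(v,b,t+s)$ is obtained from $A(v,b,t)$ and $A(w,b,t)$ by the same rule that produces $A(v,b,t)$ from $\{v(\cdot,0)\le b\}$ and $\{w(\cdot,0)\ge b\}$, namely ``translate the blue set right and the red set left at speed one, and, where they collide, freeze the touching endpoints and consume both intervals at equal rates'' (cf.\ Figs.~\ref{fig22}--\ref{fig23}); this is checked directly from \eqref{j17.1a}--\eqref{j18.6} and uses the monotonicity of $b\mapsto A(v,b,t)$, $b\mapsto A(w,b,t)$ from Remark~\ref{a24.1}(ii). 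Since the construction in Theorem~\ref{m30.1} realizes the full solution by concatenating, over successive time slabs, the building-block constructions of Lemma~\ref{j6.5}, these two facts reduce \eqref{e:levelsets} to the corresponding identity inside a single triangle $U_{z,\eps}$ from Lemma~\ref{j6.5}, with the global data replaced by its restriction to the base of that triangle; there the data has the simple monotone structure of one of the cases (i)--(x).

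It remains to verify \eqref{e:levelsets} in each of the cases (i)--(x) of Lemma~\ref{j6.5}. In the purely liquid cases (i)--(ii) the sublevel set of $v$ is transported rigidly, $\{x:v(x,t)\le b\}=\{x:v(x-t,0)\le b\}$, which is exactly $A(v,b,t)$ because the constraint $t\le(\alpha(v,b,x)-x)/2$ is vacuous (no red interval is met inside the triangle), and symmetrically for $w$; the boundary cases (vii)--(x) reduce to this after the $\mp\infty$ extension of Remark~\ref{a24.1}(iii), which encodes the reflection of characteristics at the wall. In the cases with a nontrivial liquid/frozen boundary, (iii)--(v) and (viii), one computes $\alpha(v,b,\cdot)$ and $\alpha(w,b,\cdot)$ from the fact that the sublevel, resp.\ superlevel, set of a strictly monotone continuous function is a half-interval, deduces that $A(v,b,t)$ is the interval of blue material filling up to the point where the blue and red sets of level $b$ would first touch, and matches this against the level sets of the constructed solution. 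The decisive point is that the freezing curve $\calF$ of \eqref{m25.2}, i.e.\ $\{(x,t):v(x-t,0)=w(x+t,0)\}$, is precisely the collision locus just described (for the level $b=v(x-t,0)$), that the integral inequality $\int_{z-t}^{y}(\bone_{\{v(\cdot,0)\le b\}}-\bone_{\{w(\cdot,0)\ge b\}})\ge0$ says exactly that the blue material originating at $z-t$ has not yet been consumed, and that the contraction bound \eqref{m22.5} makes $x\mapsto x-h_\calF(x)$ strictly increasing, so that the frozen value on $\wh U^{+}$ is a strictly increasing function of the horizontal coordinate and the two descriptions can be aligned point by point. The thawing curve $\calT$ of \eqref{a28.1} and the bound \eqref{m22.6} play the analogous role for the re-emergence of level sets into the liquid component above $\calT$.

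The one genuinely delicate step -- and the step I expect to be the main obstacle -- is case (iv), in which a freezing curve $\calF$ and a thawing curve $\calT$ are simultaneously present and the liquid zone of $v$ has two connected components. Here one must show that the single interval predicted by $A(v,b,t)$ coincides with the union of three pieces of the constructed sublevel set of $v$: the still-liquid blue material (rigidly translated, but only up to its own collision point), the already-frozen material sitting on $\wh U^{+}\cup U_r^{-}$ (stuck, with value $\le b$), and the material that has re-emerged across $\calT$ into $U_r^{+}$; and likewise for $A(w,b,t)$. This is carried out by tracking a level-$b$ sublevel endpoint through the triangle of Fig.~\ref{fig19a} and repeatedly using the defining relations \eqref{m25.2}, \eqref{a28.1} together with the monotonicity of $x\mapsto x-h_\calF(x)$ and of the relevant combination $x\mapsto x\pm h_\calT(x)$ supplied by \eqref{m22.5}--\eqref{m22.6}. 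The remaining cases are either symmetric to ones already treated, as noted in the proof of Lemma~\ref{j6.5}, or strictly simpler.
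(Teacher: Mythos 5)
Your overall strategy is the same as the paper's: define the sublevel/superlevel sets $\calA(v,b,t)=\{x:v(x,t)\le b\}$, $\calA(w,b,t)=\{x:w(x,t)\ge b\}$ of the constructed solution, show $A=\calA$ by verifying the annihilation dynamics inside the triangles of Lemma \ref{j6.5} (after shrinking $\eps$ so the level-$b$ configuration is one of a short list of normal forms), and use the time-shift consistency of \eqref{j17.1a}--\eqref{j18.6} to restart. Your identification of the freezing curve with the collision locus and of the integral inequality with ``the blue material at $z-t$ has not yet been consumed'' is exactly the mechanism the paper exploits, and your singling out of case (iv) as the delicate one is also where the paper spends its effort.

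There is, however, a genuine gap in the step ``these two facts reduce \eqref{e:levelsets} to the corresponding identity inside a single triangle.'' The local verifications only give $A=\calA$ on a time interval $[0,t_1]$ (and for $I=\R$ even obtaining a uniform $t_1>0$ requires the localization $v^b,w^b$ of \eqref{a13.1}--\eqref{a13.2} plus compactness, which you do not mention). To propagate to all $t\ge0$ one sets $t_2=\sup\{s: A=\calA \text{ on } [0,s]\}$ and must rule out $t_2<\infty$ by restarting at $t_2$ via the semigroup law. The restart requires $A(v,b,t_2)=\calA(v,b,t_2)$ at the endpoint itself, i.e.\ that $t\mapsto v_*(x,t)$, $t\mapsto w_*(x,t)$ are left continuous in $t$. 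This is not automatic from the formulas \eqref{j17.5a}--\eqref{j17.8a}: the constraint $t\le(\alpha(v,b,x-t)-(x-t))/2$ moves with $t$ in both the argument of $\alpha$ and the threshold, and the paper needs a full page of estimates (its Step 7, resting on the monotonicity facts \eqref{a17.1} and \eqref{a25.2} about $\alpha(v,\cdot,\cdot)$, which themselves require an argument about the piecewise-linear auxiliary function $g_b$) to establish it, with a separate treatment of the boundary points $a_1,a_2$. Without this continuity input your ``concatenation over successive time slabs'' could in principle stall at a finite accumulation time, so the sketch as written does not close; everything else in it is a faithful, if compressed, version of the paper's argument.
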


\begin{proof}

\emph{Step 1}.
Let $v$ and $w$ be the solutions to \eqref{n10.4}-\eqref{a5.4} in $I\times [0,\infty)$ constructed in Theorem \ref{m30.1} under Assumption \ref{m29.2}.
We define sublevels of $v$ and superlevels of $w$  by
\begin{align}\label{j21.1}
\calA(v,b,t)&= \{x\in I: v(x,t) \leq b\}, \\
\calA(w,b,t)&= \{x\in I: w(x,t)\geq b\}. \label{j21.2}
\end{align}
Then
\begin{align}
v(x,t) & = \inf\{b\in\R: x\in \calA(v,b,t)\}, \qquad x\in I, t\geq 0, \label{j17.7b} \\
w(x,t) & = \sup\{b\in\R: x\in \calA(w,b,t)\}, \qquad x\in I, t\geq 0. \label{j17.8b} 
\end{align}

By comparing \eqref{j17.5a}-\eqref{j17.6a} and \eqref{j18.5}-\eqref{j17.8a} to \eqref{j21.1}-\eqref{j17.8b}, we see that it will suffice to show that $\calA(v,b,t)= A(v,b,t)$ and $\calA(w,b,t)= A(w,b,t)$ for all $b\in \R$ and $t\geq 0$.
The proof will be based on the following heuristic description of the evolution of $t\to \calA(v,b,t)$ and $t\to \calA(w,b,t)$. The set $\calA(v,b,t)$ moves to the right at the speed 1, $\calA(w,b,t)$ moves to the left at the same speed, and the two sets annihilate each other wherever they meet. We will make this precise in (I)-(VIII) below.

Assumption \ref{m29.2} and Lemma \ref{m26.1} (iv) imply that  for every $t$, $v(x,t)$ and $w(x,t)$ have only a finite number of extrema on every finite interval. Thus the intersection of  $ \calA(v,b,t)$  with any finite interval consists of a finite number of disjoint closed intervals and the same can be said about $\calA(w,b,t)$.

Since $v(x,t)\geq w(x,t)$ for all $x$ and $t$, if
$x\in \calA(v,b,t)\cap \calA(w,b,t)$
then $v(x,t) = w(x,t)$. We claim that 
$ \calA(v,b,t)\cap \calA(w,b,t)$
can contain only some endpoints of the intervals comprising 
$ \calA(v,b,t)$ and $ \calA(w,b,t)$. Otherwise, it would contain an interval $J$ of strictly positive length. We would have $v(x,t) = w(x,t)=b$ for all $x\in J$. This is impossible because for a fixed $t$, $v(x,t)$ and $w(x,t)$ are not constant over an interval of positive length, by Lemma \ref{m26.1} (v).

Suppose that  $b\in \R$ and $z$ is in the interior of $I$.
The above observations imply that there exists $\eps>0$  such that 
conditions (1)-(2) of Lemma \ref{j6.5} are satisfied on the interval $[z-\eps , z+\eps ]$ and, moreover,
one of the conditions (I)-(V) listed below holds.
If $I=[a_1,a_2]$ and $z= a_1$ then one of the conditions (VI)-(VIII) holds.

We will analyze the time evolution of $ \calA(v,b,t)$ and $ \calA(w,b,t)$ on a time interval $[0, \eps]$. 
For an interval $[z-\eps , z+\eps ]$ and $t\geq  0$ we will write
$J_{t}= [z-\eps +t, z+\eps -t]$.

\medskip
\emph{Step 2}.
We will  describe  the time evolution of $ \calA(v,b,t)$ and $ \calA(w,b,t)$ depending on the form of intersections of these sets with $[z-\eps , z+\eps ]$.
We will  prove that our representation is correct in the next step.

\smallskip
(I) 
Suppose that  one of the following conditions holds, 
\begin{align}
&\calA(v,b,0)\cap [z-\eps ,z+\eps ]=[z,z+\eps], 
\  \calA(w,b,0)\cap [z-\eps ,z+\eps ]=[z-\eps,z] , \label{a11.5}\\
&\calA(v,b,0)\cap [z-\eps ,z+\eps ]=[z,z+\eps], 
\  \calA(w,b,0)\cap [z-\eps ,z+\eps ]=\{z\} , \label{a11.10} \\
&\calA(w,b,0)\cap [z-\eps ,z+\eps ]=[z-\eps,z], 
\  \calA(v,b,0)\cap [z-\eps ,z+\eps ]=\{z\} .\label{a11.11} 
\end{align}
 Then for $t\in (0,  \eps ]$, 
\begin{align}\label{j22.2}
\calA(v,b, t) \cap J_t& =(\calA(v,b, 0)+ t ) \cap J_t ,\\
\calA(w,b, t) \cap J_t &= (\calA(w,b, 0)- t ) \cap  J_t .\label{j22.5}
\end{align}

\smallskip
(II) 
Suppose that  one of the following conditions holds, 
\begin{align}
&\calA(v,b,0)\cap [z-\eps ,z+\eps ]=\emptyset, \label{a11.1} \\
&\calA(w,b,0)\cap [z-\eps ,z+\eps ]=[z,z+\eps], 
\  \calA(v,b,0)\cap [z-\eps ,z+\eps ]=\{z\} , \label{a11.4} \\
&\calA(w,b,0)\cap [z-\eps ,z+\eps ]=[z-\eps,z+\eps], 
\  \calA(v,b,0)\cap [z-\eps ,z+\eps ]=\{z\} . \label{a11.7} 
\end{align}
 Then for $t\in (0,  \eps ]$, 
\begin{align}\label{j22.2a}
\calA(v,b, t) \cap J_t& =\emptyset ,\\
\calA(w,b, t) \cap J_t &= (\calA(w,b, 0)- t ) \cap  J_t .\label{j22.5a}
\end{align}

\smallskip
(III) 
Suppose that  one of the following conditions holds, 
\begin{align}
&\calA(w,b,0)\cap [z-\eps ,z+\eps ]=\emptyset, \label{a11.2} \\
&\calA(v,b,0)\cap [z-\eps ,z+\eps ]=[z-\eps,z], 
\  \calA(w,b,0)\cap [z-\eps ,z+\eps ]=\{z\} , \label{a11.3} \\
&\calA(v,b,0)\cap [z-\eps ,z+\eps ]=[z-\eps,z+\eps], 
\  \calA(w,b,0)\cap [z-\eps ,z+\eps ]=\{z\} .\label{a11.6} 
\end{align}
 Then for $t\in (0,  \eps ]$, 
\begin{align}\label{j22.2b}
\calA(v,b, t) \cap J_t& =(\calA(v,b, 0)+ t ) \cap J_t ,\\
\calA(w,b, t) \cap J_t &= \emptyset .\label{j22.5b}
\end{align}

\smallskip
(IV)
Suppose that  $ \calA(v,b,0)\cap [z-\eps ,z+\eps ] = [z-\eps ,z]$ and $ \calA(w,b,0)\cap [z-\eps ,z+\eps ] = [z,z+\eps ]$.
 Then for $t\in (0,  \eps ]$, 
\begin{align}\label{j20.2}
\calA(v,b, t) \cap  J_t 
&= [z-\eps , z]\cap J_t,\\
\calA(w,b, t) \cap  J_t 
&= [z, z+\eps ]\cap J_t.\label{j20.3}
\end{align}

\smallskip
(V)
Suppose that  $ \calA(v,b,0)\cap [z-\eps ,z+\eps ] = \calA(w,b,0)\cap [z-\eps ,z+\eps ] = \{z\}$.
 Then for $t\in (0,  \eps ]$, 
\begin{align}\label{j22.10}
\calA(v,b, t) \cap  J_t 
&= \{z+ t \}\cap J_t,\\
\calA(w,b, t) \cap  J_t 
&= \{z- t \}\cap J_t.\label{j22.11}
\end{align}

\smallskip

Recall that cases (I)-(V) apply when either $I=\R$ or $I=[a_1,a_2]$.
The  discussion in (VI)-(VIII) below is concerned with $I=[a_1,a_2]$ and the case when 
$[z-\eps ,z+\eps ]\cap I=[z,z+\eps ] = [a_1,a_1+\eps]$.
We have
$a_1\in \calA(v,b,0)\cap [a_1,a_1+\eps ]$ or $a_1\in \calA(w,b,0)\cap [a_1,a_1+\eps ] $
because $v(a_1,0)=w(a_1,0)$. 
Let $K_t= [a_1, a_1+\eps - t ]$.
For a given $b\in\R$  there exists $\eps >0$ such that one  of the following cases (IV)-(VI) holds.

\smallskip
(VI) Suppose that 
$ \calA(v,b,0)\cap [a_1,a_1+\eps ]= \calA(w,b,0)\cap [a_1,a_1+\eps ]=\{a_1\} $.
 Then for $t\in (0,  \eps ]$, 
\begin{align}\label{a10.1}
\calA(v,b, t) \cap K_t& =[a_1,a_1+ t ] \cap K_t ,\\
\calA(w,b, t) \cap K_t &= \emptyset .\label{a10.2}
\end{align}

\smallskip
(VII) Suppose that 
$ \calA(v,b,0)\cap [a_1,a_1+\eps ] =[a_1,a_1+\eps ]$ and $ \calA(w,b,0)\cap (a_1,a_1+\eps ]=\emptyset $.
 Then for $t\in (0,  \eps ]$, 
\begin{align}\label{a10.3}
\calA(v,b, t) \cap K_t& = K_t ,\\
\calA(w,b, t) \cap K_t &= \emptyset .\label{a10.4}
\end{align}

\smallskip
(VIII) Suppose that 
$ \calA(v,b,0)\cap (a_1,a_1+\eps ]=\emptyset$ and $ \calA(w,b,0)\cap [a_1,a_1+\eps ]= [a_1,a_1+\eps ] $.
 Then for $t\in (0,  \eps ]$, 
\begin{align}\label{a10.5}
\calA(v,b, t) \cap K_t& =\emptyset ,\\
\calA(w,b, t) \cap K_t &= K_t .\label{a10.6}
\end{align}

The case when $[z-\eps ,z+\eps ]\cap I=[a_2-\eps , a_2]$ is symmetric to $[z-\eps ,z+\eps ]\cap I=[a_1,a_1+\eps ]$ so it will be omitted.

\medskip

\emph{Step 3}.
Now we will prove \eqref{j22.2}-\eqref{a10.6}. First we will verify that they hold in every triangle $U$ of the type considered in Lemma \ref{j6.5}.
We will refer to cases listed in part (b) of the proof of Lemma \ref{j6.5}.
Note that the fact that we choose $\eps>0$ so small that one of conditions (I)-(VI) is satisfied adds extra assumptions to the cases in  Lemma \ref{j6.5}, greatly simplifying them.

In case (i), if the inequality between the infimum and supremum is strict then either \eqref{a11.1} or \eqref{a11.2} holds. Then it is clear that the definition of $v$ and $w$ in case (i) agrees with \eqref{j22.2a}-\eqref{j22.5a} or \eqref{j22.2b}-\eqref{j22.5b}.

If the infimum is equal to the supremum in case (i) then this corresponds to  case (V). Once again, it is easy to check that the definition of $v$ and $w$ in case (i) agrees with \eqref{j22.10}-\eqref{j22.11}.

In cases  (ii) (a) and (b), one of the conditions \eqref{a11.5}-\eqref{a11.11} is satisfied. The definition of $v$ and $w$ in cases  (ii) (a) and (b) agrees with \eqref{j22.2}-\eqref{j22.5}.

Case (ii) (c)  corresponds to  case (V). The definition of $v$ and $w$ in case (ii) (c) agrees with \eqref{j22.10}-\eqref{j22.11}.

Case (iii) corresponds to (IV). The characteristics of $v$ and $w$ emanating from $(z,0)$ are identical and have slope 0. Therefore, \eqref{j20.2}-\eqref{j20.3} are satisfied.

In case (iv), one of the conditions \eqref{a11.1}, \eqref{a11.2},\eqref{a11.3} or \eqref{a11.6} is satisfied. 

If \eqref{a11.1} holds then  all characteristics of $w$ where $w$ takes the value $b$ have to be straight lines with slope $-1$ so \eqref{j22.2a}-\eqref{j22.5a} is true.

If one of the conditions  \eqref{a11.2},\eqref{a11.3} or \eqref{a11.6} is satisfied then the construction in part (iv) of the proof of Lemma \ref{j6.5} shows that all characteristics of $v$ where $v$ takes the value $b$ have to be straight lines with slope $1$ so \eqref{j22.2b}-\eqref{j22.5b} is true.

Case (v) is a symmetric version of (iv).

Case (vii)  can occur simultaneously with any of the cases (VI)-(VIII). 

Consider case (vii) combined with (VI). Characteristics of $v$ with endpoints 
$(x,0)$ with $x>a_1$ will carry values greater than $b$. Those with endpoints
$(a_1,t)$ with $t>0$ will carry values less than $b$ because  $(a_1,t)$
is the endpoint of a characteristic of $w$ starting from a point $(y,0)$ with $y>a_1$.
This agrees with \eqref{a10.1}-\eqref{a10.2}.

Consider case (vii) combined with (VII). In this case there will be no characteristics of $w$ with values greater than or equal to $b$ so all characteristics of $v$ will carry values less than or equal to $b$. Hence,  \eqref{a10.3}-\eqref{a10.4} will hold.

Consider case (vii) combined with (VIII). Characteristics of $v$ with endpoints 
$(x,0)$ with $x>a_1$ will carry values greater than $b$. Those with an endpoint
$(a_1,t)$ with $t>0$ will carry values greater than $b$ because  $(a_1,t)$
is the endpoint of a characteristic of $w$ starting from a point $(y,0)$ with $y>a_1$.
This agrees with \eqref{a10.5}-\eqref{a10.6}.

Case (viii)  can occur simultaneously with  cases (VII)-(VIII). 

Consider case (viii) combined with (VII). In this case there will be no characteristics of $w$ with values greater than or equal to $b$ so all characteristics of $v$ will carry values less than or equal to $b$. Hence,  \eqref{a10.3}-\eqref{a10.4} will hold.

Consider case (viii) combined with (VIII).  Characteristics of $v$ with endpoints 
$(x,0)$ with $x>a_1$ will carry values greater than $b$. Those with an endpoint
$(a_1,t)$ with $t>0$ will carry values greater than $b$ because  $(a_1,t)$
is the endpoint of a characteristic of $w$ starting from a point $(y,0)$ with $y>a_1$.
This agrees with \eqref{a10.5}-\eqref{a10.6}.

Cases (ix) and (x) are symmetric with (vii) and (viii).

\medskip

\emph{Step 4}.
Next we will prove that (I)-(VIII) hold when 
$ \calA(v,b,t)$ and $ \calA(w,b,t)$ are replaced with $ A(v,b,t)$ and $ A(w,b,t)$.

It follows  from \eqref{j17.5a}-\eqref{j17.6a}, \eqref{j18.5}-\eqref{j18.6} and \eqref{j21.1}-\eqref{j21.2} that $ \calA(v,b,0)= A(v,b,0)$ and $ \calA(w,b,0)= A(w,b,0)$.

For a set $C\subset \R$, let $\Leb(C)$ denote its Lebesgue measure.  
Formulas \eqref{j17.1a}-\eqref{j17.6a} can be interpreted as follows. For every point $x\in A(v,b,0)$ we try to find a matching point $y= \alpha(v,b,x)\in  A(w,b,0)$ such that $ \Leb( A(v,b,0) \cap [x,y] )= \Leb(A(w,b,0)\cap [x,y])$. If such a point $y$ exists then $x= \alpha (w,b,y)$, except for a set of $x$ and $y$ of measure zero consisting of some endpoints of intervals comprising
$  A(v,b,0) $ and $ A(w,b,0)$.
Suppose that  $x,y\in I$.
We let $x$ and $y$ travel with speed 1 towards each other until they meet and annihilate each other. At time $t$, we include the transported  $x$ (i.e., $x+t$) into $  A(v,b,t) $ and we include the transported $y$ (i.e., $y-t$) into $ A(w,b,t)$---this is the meaning of \eqref{j17.5a}-\eqref{j17.6a}.
If $I=\R$ and there is no $y$ corresponding to $x$ then $x$ keeps moving forever and $x+t$ is included in  $  A(v,b,t) $ for all $t\geq 0$. It is easy to check that this interpretation agrees the description of the evolution of $ \calA(v,b,t)$ and $ \calA(w,b,t)$ given in (I)-(V).

Now suppose that $I=[a_1,a_2]$. 

In case (VI) definition 
\eqref{j18.6} implies that 
$A(w,b, t) \cap K_t =\emptyset $ for $t>0$ because 
$ A(w,b,0)\cap (a_1,a_1+\eps ]=\emptyset $.

Definition 
\eqref{j17.3} and the assumption that 
$ A(w,b,0)\cap (a_1,a_1+\eps ]=\{a_1\} $ imply that $\alpha(v,b,x)\geq a_1+\eps$  
for $x\in (a_1,a_1+\eps ]$. This, the assumption that $ A(v,b,0)\cap (a_1,a_1+\eps ]=\{a_1\} $ and definition
\eqref{j18.5} imply that 
$(x,t)\notin A(v,b, t) $ for $t\in( 0, x-a_1]$ and $x\in K_t$.
If $t> x-a_1$ then $x=y+t < a_1+t$ for some $y< a_1$. 
According to  \eqref{j18.5}, 
$(x,t)\in A(v,b, t) $ for $t> x-a_1$ and $x\in K_t$.
Hence \eqref{a10.1}-\eqref{a10.2} are satisfied by 
$ A(v,b,t)$ and $ A(w,b,t)$ in place of $ \calA(v,b,t)$ and $ \calA(w,b,t)$.

In case (VII), 
definition \eqref{j18.6} implies that 
$A(w,b, t) \cap K_t = \emptyset $ for $t>0$ because 
$ A(w,b,0)\cap (a_1,a_1+\eps ]=\emptyset $.
Definition 
\eqref{j17.3} and the assumption that 
$ A(w,b,0)\cap (a_1,a_1+\eps ]=\emptyset $ imply that $\alpha(v,b,x)\geq a_1+\eps$  
for $x\in (a_1,a_1+\eps ]$. This,
definition \eqref{j18.5} and the assumption that 
$ A(v,b,0)\cap [a_1,a_1+\eps ]=[a_1,a_1+\eps ] $ imply that for every $x\in K_t$, $(x,t) \in A(v,b,t)$, no matter whether $x-a_1 < t$ or not.
Hence \eqref{a10.3}-\eqref{a10.4} are satisfied by 
$ A(v,b,t)$ and $ A(w,b,t)$ in place of $ \calA(v,b,t)$ and $ \calA(w,b,t)$.

In case (VIII), definition 
\eqref{j17.4} and the assumption that 
$ A(w,b,0)\cap (a_1,a_1+\eps ]=(a_1,a_1+\eps ] $ imply that $\alpha(w,b,x)= a_1-(x-a_1)$ 
for $x\in (a_1,a_1+\eps ]$. This, the assumption that $ A(w,b,0)\cap (a_1,a_1+\eps ]=(a_1,a_1+\eps ] $ and definition
\eqref{j18.6} imply that 
$A(w,b, t) \cap K_t = K_t $ for $t>0$.
Definition 
\eqref{j17.3} and the assumption that 
$ A(v,b,0)\cap (a_1,a_1+\eps ]=\emptyset $ imply that $\alpha(v,b,x)= x$ 
for $x\in (a_1,a_1+\eps ]$. This and definition
\eqref{j18.5} imply that 
$A(v,b, t) \cap K_t = \emptyset $ for $t>0$.
Hence \eqref{a10.5}-\eqref{a10.6} are satisfied by 
$ A(v,b,t)$ and $ A(w,b,t)$ in place of $ \calA(v,b,t)$ and $ \calA(w,b,t)$.

\medskip

\emph{Step 5}.
Suppose that $I=\R$.
For any $x_1< y_1$, the function
$b\to \Leb\{z\in [x_1,y_1]: v(z,0)\leq b\}$ is non-decreasing and $b\to \Leb\{z\in [x_1,y_1]: w(z,0)\geq b\}$
is non-increasing. 
Hence, definition \eqref{j17.1a} shows that $b\to \alpha(v,b,x)$ is non-decreasing. 

We will argue that if $y\in(x, \alpha(v,b,x))$ then
\begin{align}\label{a18.1}
&(y, \alpha(v,b,y))\subset(x, \alpha(v,b,x)),\\
\label{a17.1}
&(\alpha(v,b,x)-x)/2 
\geq (\alpha(v,b,y)-y)/2 .
\end{align}
By \eqref{j17.1a}, there are $\delta_n \downarrow 0$ such that
\begin{align*} 
\int_x^{\alpha(v,b,x)+\delta_n} \left(\bone_{\{v(z,0) \leq b\}} - \bone_{\{w(z,0) \geq b\}}\right)dz &<0,\\
 \int_x^y \left(\bone_{\{v(z,0) \leq b\}} - \bone_{\{w(z,0) \geq b\}}\right)dz &\geq 0,
\end{align*}
so
\begin{align*} 
\int_y^{\alpha(v,b,x)+\delta_n} \left(\bone_{\{v(z,0) \leq b\}} - \bone_{\{w(z,0) \geq b\}}\right)dz &< 0.
\end{align*}
This implies that $\alpha(v,b,y)\leq \alpha(v,b,x)$, proving \eqref{a18.1} and, therefore, \eqref{a17.1}.

\medskip

Fix some $x\in I$  and for $b\in \R$, $y\in I$, $y\geq x$, let
\begin{align*}
g_b(y)=\int_x^y \left(\bone_{\{v(z,0) \leq b\}} - \bone_{\{w(z,0) \geq b\}}\right)dz.
\end{align*}
The graph $\Gamma_b$ of $g_b$ is a continuous polygonal line consisting a finite line segments on each finite interval because $v(x,0)$ and $w(x,0)$ have a finite number of local extrema on every finite interval. The slopes of line segments in $\Gamma_b$ can be $-1,1$ or $0$.

Let $y_1 = \alpha(v,b,x)$. It follows from \eqref{j17.1a} that $g(y) \geq 0$ for $y\in[x,y_1]$ and the slope of $\Gamma_b$ is $-1$ on an interval $[y_1,y_2]$, with $y_2>y_1$.

When $b$ increases, intervals where $g_b$ is increasing expand at both ends (except at $x$) and intervals where $g_b$ is decreasing are shrinking at both ends, with both types of evolution being continuous due to strict monotonicity of $v(x,0)$ and $w(x,0)$ between the local extrema.

We will argue that if $b_1>b$  then one of the following holds.

(a) For some $\delta_1>0$, all $\delta\in(0,\delta_1)$ and all $y\in(x+\delta, y_1)$,  $g_{b_1}(y) \geq g_{b_1}(x+\delta) $.

(b) $g_{b_1}(y)$ is non-decreasing for $y\in[x,y_1]$. 

Suppose that $g_b(y)>0$ for some $y \in[x,y_1]$. Then there are $x\leq x_1\leq x_2$ such that $g'_b(y)=0$  
for $y\in(x,x_1)$ and $g'_b(y)=1$ for $y\in(x_1,x_2)$. Suppose that $x_2$ is the maximum $x_2$ with this property. Then there are $x\leq x_3 \leq x_1$ and $x_4 > x_2$ such that 
 $g'_{b_1}(y)=0$  for $y\in(x,x_3)$ and $g'_{b_1}(y)=1$ for $y\in(x_3,x_4)$.
Our earlier remarks imply that $g_{b_1}(y) \geq g_b(y) + (x_4-x_2)\geq (x_4-x_2)$ for $y\in  (x_4, y_1]$. Since $g'_{b_1}(y)\leq 1$ for $y\in(x,x_4)$,
we have for  $\eta_2 = \min(x_4-x, \eta_1)$, all $\delta\in(0,\eta_2)$ and $y\in  (x+\delta, y_1]$,
\begin{align*}
g_{b_1}(x+ \delta) \leq g_{b_1}(x) +\delta =\delta  \leq 
x_4-x_2 \leq g_{b_1}(y).
\end{align*}
This proves (a).

If $g_b(y)=0$ for all $y \in[x,y_1]$ then $g'_{b_1}(y) \geq 0$ for $y\in[x,y_1]$ in view of our earlier remarks on the evolution of $b\to g_b$. This proves (b).

If (a) or (b) holds then for some $\delta_1>0$ and all $\delta\in(0,\delta_1)$,
\begin{align*}
\alpha(v,b_1,x+\delta ) = \inf\{y\geq x+\delta: g_{b_1}(y) < g_{b_1}(x+\delta)\}
\geq y_1 = \alpha(v,b,x) ,
\end{align*}
so
\begin{align}\label{a25.2}
&(\alpha(v,b_1,x+\delta)-(x+\delta))/2 
\geq (\alpha(v,b,x)-x)/2 -\delta.
\end{align}

Remark \ref{a24.1} (iii) implies that the claims proved in this step for $I=\R$ hold also when $I=[a_1,a_2]$.

\medskip

\emph{Step 6}.
We will argue that
definitions \eqref{j17.1a}-\eqref{j18.6} are consistent under time shifts in the following sense. Suppose that $t> s > 0$. If we take $\wt v(x,0) = v(x,s)$ and $\wt w(x,0)= w(x,s)$ for all $x\in I$, and define $\wt A(v,b,t-s)$ and $\wt A(w,b,t-s)$ as in \eqref{j17.1a}-\eqref{j18.6} relative to  $\wt v(x,0) $ and $\wt w(x,0)$ then $\wt A(v,b,t-s)=A(v,b,t)$ and $\wt A(w,b,t-s)=A(w,b,t)$.

Consider the case when $I=\R$.
We recall the following interpretation from Step 4.
In \eqref{j17.1a}-\eqref{j17.2a}, for every point $x\in A(v,b,0)$ we try to find a matching point $y= \alpha(v,b,x)\in  A(w,b,0)$ such that $ \Leb( A(v,b,0) \cap [x,y] )= \Leb(A(w,b,0)\cap [x,y])$. If such a $y$ exists then $x= \alpha (w,b,y)$, except for a set of $x$ and $y$ of measure zero---some endpoints of intervals comprising
$  A(v,b,0) $ and $ A(w,b,0)$.
We let $x$ and $y$ travel with speed 1 towards each other until they meet and annihilate each other. At a time $t$ before the annihilation time we include the transported  $x$ (i.e., $x+t$) into $  A(v,b,t) $ and we include the transported $y$ (i.e., $y-t$) into $ A(w,b,t)$---this is the meaning of \eqref{j17.5a}-\eqref{j17.6a}. 
If $I=\R$ and there is no $y$ corresponding to $x$ then $x$ keeps moving forever and $x+t$ is included in  $  A(v,b,t) $ for all $t\geq 0$. The above representation of \eqref{j17.1a}-\eqref{j17.6a} makes it clear why these definitions are invariant under time shifts, as stated at the beginning of this step.

Remark \ref{a24.1} (iii) implies that the assertion made at the beginning of  this step holds also when  $I=[a_1,a_2]$.

\medskip

\emph{Step 7}.
Lemma \ref{m26.1} (ii)-(iii) implies that
the functions $v(x,t)$ and $w(x,t)$ are jointly continuous. 

We will prove that the functions $v_*(x,t)$ and $w_*(x,t)$ are left continuous in $t$, i.e., if $x\in I$ and $t>0$  then for any $\delta>0$  there is $\zeta>0$ such that  $|v_*(x,t_1) -v_*(x,t)|\leq \delta$ if $t_1 \in(t-\zeta,t)$, and an analogous statement holds for $w_*$.

Suppose that $I=\R$.

Recall from Step 5 that $b\to A(v,b,t)$ is non-decreasing. 

Suppose that $t>0$ and $v_*(r,t)=b$. By \eqref{j17.7a}, $r\in A(v,b_1,t)$ for $b_1>b$ and $r\notin A(v,b_1,t)$ for $b_1<b$.
 In view of \eqref{j17.5a}, $v(r-t,0) \leq b_1$ and $t \leq(\alpha(v,b_1,r-t)-r+t)/2 $ for $b_1>b$. Thus $v(r-t,0) \leq b$.
 
Consider an arbitrarily small $\delta >0$.
Since $ v(x,0)$ is continuous,
we can find $\eta>0$  so small that $|v(r-t_1,0) - v(r-t,0)|<\delta$ if  $t_1 \in(t-\eta,t)$.
It follows from $v(r-t,0) \leq b$
that $v(r-t_1,0) \leq b +\delta$ for  $t_1 \in(t-\eta,t)$. 

Recall that $t \leq(\alpha(v,b_1,r-t)-r+t)/2 $ for $b_1>b$. Fix some $b_1\in(b, b+\delta)$ and, using \eqref{a25.2},  find $b_2\in(b_1, b+\delta)$ such that for some $\eta_1\in(0,\eta)$ and $t_1\in(t-\eta_1, t)$, 
\begin{align*}
t& \leq(\alpha(v,b_1,r-t)-r+t)/2 
\leq (\alpha(v,b_2,r-t_1)-r+t_1)/2 +(t-t_1),\\
t_1
&\leq (\alpha(v,b_2,r-t_1)-r+t_1)/2 .
\end{align*}
This, the fact that $v(r-t_1,0) \leq b +\delta$ and \eqref{j17.5a} imply that 
$r\in A(v,b_2,t_1)$ for some $b_2 \in(b, b+\delta)$ and $t_1\in(t-\eta_1, t)$. 
Definition \eqref{j17.7a} implies that $v_*(r,t_1) \leq b +\delta$ for $t_1 \in(t-\eta_1,t)$.

If $v(r-t,0) = b$ and $t_1 \in(t-\eta,t)$ then $v(r-t_1,0) \geq b -\delta$ and, in view of 
\eqref{j17.5a}, $r\notin A(v,b_1,t_1)$ for $b_1 \leq b-\delta$. Definition \eqref{j17.7a} implies that $v_*(r,t_1) \geq b -\delta$ for $t_1 \in(t-\eta,t)$.

Suppose that $v(r-t,0) = b_2 < b$.
Definition \eqref{j17.7a} implies that $r\in A(v,b_1,t)$ for $b_1>b$ and $r\notin A(v,b_1,t)$ for $b_1<b$.  This, definition \eqref{j17.5a} and the assumption that $v(r-t,0) = b_2 < b$ imply that $t >(\alpha(v,b_1,r-t)-r+t)/2 $ when $b_1 \in(b_2, b)$. Fix any $b_1 \in(b_2, b)$ and find $\eta_2>0$ such that if $t_1 \in(t- \eta_2,t)$ then  $t_1 >(\alpha(v,b_1,r-t)-r+t)/2 $.
Since $r-t_1 > r-t$, \eqref{a17.1} shows that
\begin{align*}
t_1 >(\alpha(v,b_1,r-t)-r+t)/2 
\geq (\alpha(v,b_1,r-t_1)-r+t_1)/2 .
\end{align*}
In view of 
\eqref{j17.5a}, $r\notin A(v,b_1,t_1)$ for $t_1 \in(t-\eta_2,t)$. Definition \eqref{j17.7a} implies that $v_*(r,t_1) \geq b -\delta$ for  $t_1 \in(t-\eta_2,t)$.

The above estimates show  that for any $\delta>0$ there is $\eta_3>0$ such that  $|v_*(r,t_1) -v_*(r,t)|=|v_*(r,t_1) -b|\leq \delta$ if $t_1 \in(t-\eta_3,t)$.

The proof for $w_*$ is analogous. 

Suppose that $I=[a_1,a_2]$.
If $r\in(a_1,a_2)$ and $t>0$ then we can apply Step 6 and consider a triangle of the form \eqref{m25.1}, such that $(r,t)$ is in the interior of the triangle. Then we can apply the argument presented earlier in this step, noting that $A(v,b,s)$ and $A(w,b,s)$ are defined in the interior of the triangle using the data at its base.

Suppose that $t>0$ and $v_*(a_1,t)=b$. Then
 we should take $x=a_1-t$ in \eqref{j18.5}. Since $x\leq a_1$,  the only condition in \eqref{j18.5} that matters is $t\leq (\alpha(v, b, x) - x)/2$. Recall that
 $a_1\in A(v,b_1,t)$ for $b_1>b$ and $a_1\notin A(v,b_1,t)$ for $b_1<b$ so 
$t\leq (\alpha(v, b, a_1-t) - a_1+t)/2$.
Since $c\to\alpha(v,c,z)$ is  increasing, $t\leq (\alpha(v, b_1, a_1-t) - a_1+t)/2$ for $b_1>b$.

By \eqref{a25.2}, we can find $b_1\in(b, b+\delta)$ and $b_2\in(b_1,b+\delta)$ such that for some $\eta_4\in(0,\eta)$ and all $t_1\in(t-\eta_4, t)$, 
\begin{align*}
t& \leq(\alpha(v,b_1,a_1-t)-a_1+t)/2 
\leq (\alpha(v,b_2,a_1-t_1)-a_1+t_1)/2 +(t-t_1),\\
t_1
&\leq (\alpha(v,b_2,a_1-t_1)-a_1+t_1)/2 .
\end{align*}
This  and \eqref{j17.5a} imply that 
$a_1\in A(v,b_2,t_1)$ for some $b_2 \in(b, b+\delta)$. 
Definition \eqref{j17.7a} implies that $v_*(a_1,t_1) \leq b +\delta$ for $t_1 \in(t-\eta_4,t)$.
 
For $b_1<b$, $t> (\alpha(v, b_1, a_1-t) - a_1+t)/2$.
Let $\eta_5>0$ be such that $t> (\alpha(v, b_1, a_1-t) - a_1+t)/2 + \eta_5$. 
Then, using \eqref{a17.1}, for $\eta_6\in(0,\eta_5)$,
\begin{align*}
t-\eta_6&> (\alpha(v, b_1, a_1-t) - a_1+t)/2+\eta_5-\eta_6
> (\alpha(v, b_1, a_1-t) - a_1+t)/2\\
&\geq (\alpha(v, b_1, a_1-t+\eta_6) - a_1+t-\eta_6)/2 .
\end{align*}
This and \eqref{j18.5} show that  $a_1 \notin A(v, b_1, t-\eta_6)$ for $\eta_6\in(0,\eta_5)$. Definition \eqref{j17.7a} implies that for every $b_1<b$ there is $\eta_5>0$ such that for all $t_1 \in(t-\eta_5,t)$ we have $v_*(a_1,t_1) \geq b_1 $.
We conclude that for any $\delta>0$ there is $\eta_7>0$ such that  $|v_*(a_1,t_1) -v_*(a_1,t)|=|v_*(a_1,t_1) -b|\leq \delta$ if $t_1 \in(t-\eta_7,t)$.

The proof in the case when $r=a_2$, and for $w_*$ is analogous.

\medskip
\emph{Step 8}.
Consider the case when $I=\R$.
Recall the localization argument based on \eqref{a13.1}-\eqref{a13.2}.
It is easy to check that $ A(v^b,b,t) \setminus [-4b,4b]=\calA(v^b,b,t) \setminus [-4b,4b]$ and $ A(w^b,b,t) \setminus [-4b,4b]=\calA(w^b,b,t) \setminus [-4b,4b]$ for $t\in[0,b]$. The argument given at the end of the proof of Theorem \ref{m30.1} shows that it is enough to prove the theorem for an arbitrary fixed $b>0$ and then we can let $b\to \infty$.

We fix an arbitrary $b>0$ and drop the superscript $b$ on $v$ and $w$. 

We have shown that for every $z\in \R$ there is $\eps>0$ such that for $t\in[0,\eps/2)$,
\begin{align*}
 A(v,b,t) \cap [z-\eps/2,z+\eps/2]&=\calA(v,b,t)  \cap [z-\eps/2,z+\eps/2],\\
  A(w,b,t) \cap [z-\eps/2,z+\eps/2]&=\calA(w,b,t)  \cap [z-\eps/2,z+\eps/2].
\end{align*}
By compactness, there exists
 $t_1>0$ such that  $ A(v,b,t) =\calA(v,b,t)  $ and $ A(w,b,t) =\calA(w,b,t) $ for $t\in[0,t_1]$. Let $t_2$ be the supremum of $s$ such that 
  $ A(v,b,t) =\calA(v,b,t)  $ and $ A(w,b,t) =\calA(w,b,t) $ for $t\in[0,s]$. 
  
Since
 $ A(v,b,t) =\calA(v,b,t)  $ and $ A(w,b,t) =\calA(w,b,t) $ for $t\in [0, t_2)$, it follows that  $v_*(x,t)=v(x,t)$ and $w_*(x,t)=w(x,t)$ for $x\in \R$ and $t\in[0,t_2)$.
If $t_2<\infty$ then,  by Step 7,
  $v_*(x,t)=v(x,t)$ and $w_*(x,t)=w(x,t)$ for $x\in \R$ and $t\in[0,t_2]$.
Let $\wt v(x,0) = v(x,t_2)$ and $\wt w(x,0) = w(x,t_2)$.
By Step 6, there exists
 $t_3>0$ such that  $ A(\wt v,b,t) =\calA(\wt v,b,t)  $ and $ A(\wt w,b,t) =\calA(\wt w,b,t) $ for $t\in[0,t_3]$. This contradicts the definition of $t_2$ and the assumption that $t_2<\infty$. We conclude that $ A(v,b,t) =\calA(v,b,t)  $ and $ A(w,b,t) =\calA(w,b,t) $ for all $t\geq 0$.

The case when $I=[a_1,a_2]$ can be dealt with in a similar manner, without a need for localization.
\end{proof}

If $B\subset \R$, $K\subset \R\times[0,\infty)$, $f_1,f_2:B\to \R$ and $g_1,g_2:K\to \R$ then we let
\begin{align*}
\|(f_1,f_2)\|_B &= \sup_{x\in B} \max(|f_1(x)|,|f_2(x)|), \\
\|(g_1,g_2)\|_K &= \sup_{(x,t)\in K} \max (|g_1(x,t)|,|g_2(x,t)|).
\end{align*}
Let $\calC_1$ be the space of pairs of functions $(v(\,\cdot\,,0),w(\,\cdot\,,0))$ mapping $I^2$ to $ \R^2$,  equipped with the norm $\|\,\cdot\,\|_I$ and such that  $v(x,0)$ and $w(x,0)$ are continuous on $I$ and $v(x,0)\geq w(x,0)$ for all $x\in I$. If $I=[a_1,a_2]$ then we also assume that $v(a_1,0)=w(a_1,0)$ and $v(a_2,0)=w(a_2,0)$.

Let $\calX\subset \calC_1$ be the space of pairs of functions in addition satisfying  Assumption \ref{m29.2}.

Let $\calC_2$ be the space 
of pairs of jointly continuous functions $(v(\,\cdot\,,\,\cdot\,),w(\,\cdot\,,\,\cdot\,))$ mapping $(I\times[0,\infty))^2$ to $ \R^2$,  equipped with the norm $\|\,\cdot\,\|_{I\times[0,\infty)}$. 

The norms $\|\,\cdot\,\|_I$ and  $\|\,\cdot\,\|_{I\times[0,\infty)}$ define metrics $d_1$ and $d_2$ in $\calC_1$ and $\calC_2$. A functional $\calT: \calC_1\to \calC_2$ will be called 1-Lipschitz if
for all $(f_1,f_2), (f'_1,f'_2)\in \calC_1$,
\begin{align*}
d_2(\calT(f_1,f_2) , \calT(f'_1,f'_2))
\leq  d_1((f_1,f_2) , (f'_1,f'_2)).
\end{align*}

Note that $\calC_2$ is complete in the metric $d_2$.

\begin{theorem}\label{j22.1}
(i) If $I=[a_1,a_2]$ then $\calX $ is dense in $\calC_1$. The mapping $\calT:\calX\to\calC_2$ taking the initial conditions $(v(\,\cdot\,,0),w(\,\cdot\,,0))\in\calX$ to the solution of  \eqref{n10.4}-\eqref{a5.4} defined  in Theorem \ref{m30.1} can be extended to a 1-Lipschitz functional $\calT:\calC_1\to\calC_2$. 

(ii)
If $I=\R$ then the mapping $\calT:\calX\to\calC_2$ taking the initial conditions  to the solutions to  \eqref{n10.4}-\eqref{a5.4} defined  in Theorem \ref{m30.1} can be extended to a  functional $\calT:\calC_1\to\calC_2$. For every $a>0$, the functional $\calT$  
 is 1-Lipschitz relative to the norms  $\|\,\cdot\,\|_{[-2a,2a]}$ and $\|\,\cdot\,\|_{[-a,a]\times[0,a)}$.
 
(iii) In both cases, elements of $\calT(\calC_1)$ are solutions to \eqref{n10.4}-\eqref{a5.4}.
 
\end{theorem}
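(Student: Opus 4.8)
\textit{Proof proposal.} The backbone is the observation that the formulas \eqref{j17.1a}--\eqref{j17.8a} produce a pair $(v_*,w_*)$ from \emph{any} measurable $(v(\cdot,0),w(\cdot,0))$, that this pair coincides with $\calT$ on $\calX$ by Lemma~\ref{a22.6}, and that it depends $1$-Lipschitz--ly on the data (locally, when $I=\R$). So the plan is to \emph{define} the extended map by $\calT(v(\cdot,0),w(\cdot,0)):=(v_*,w_*)$ and then check, in this order: (1) the Lipschitz estimates; (2) density of $\calX$ in $\calC_1$ when $I=[a_1,a_2]$, and the compatibility of the two descriptions of the extension; (3) joint continuity of $(v_*,w_*)$ for continuous data, so that $(v_*,w_*)\in\calC_2$; (4) that $(v_*,w_*)$ solves \eqref{n10.4}--\eqref{a5.4}, which is part~(iii).

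For (1) I would argue as follows. If $\|(v(\cdot,0),w(\cdot,0))-(v'(\cdot,0),w'(\cdot,0))\|_I\le\eps$, then $\{v'(\cdot,0)\le b-\eps\}\subseteq\{v(\cdot,0)\le b\}$ and $\{w(\cdot,0)\ge b\}\subseteq\{w'(\cdot,0)\ge b-\eps\}$, so the integrand defining $\alpha(v',b-\eps,x)$ never exceeds the one defining $\alpha(v,b,x)$; hence $\alpha(v',b-\eps,x)\le\alpha(v,b,x)$, and reading off \eqref{j17.5a}--\eqref{j18.6} gives $A(v',b,t)\subseteq A(v,b+\eps,t)$. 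Taking infima as in \eqref{j17.7a} yields $v_*\le v_*'+\eps$, and by the symmetric estimate $|v_*-v_*'|\le\eps$; the bound $|w_*-w_*'|\le\eps$ follows the same way from the monotonicity of $b\mapsto\alpha(w,b,x)$. This is the $1$-Lipschitz bound when $I=[a_1,a_2]$. When $I=\R$, the key localization is that, for $|x|\le a$ and $0\le t<a$, one has $-2a\le x-t$ and $x+t\le 2a$, and $x\in A(v,b,t)$ is equivalent to $v(x-t,0)\le b$ together with $\int_{x-t}^{y}(\bone_{\{v(\cdot,0)\le b\}}-\bone_{\{w(\cdot,0)\ge b\}})\ge0$ for every $y\in[x-t,x+t)$; thus $(v_*,w_*)$ on $[-a,a]\times[0,a)$ is determined by the data on $[-2a,2a]$, and repeating the comparison above with all integrals confined to $[-2a,2a]$ gives the local $1$-Lipschitz estimate relative to $\|\cdot\|_{[-2a,2a]}$ and $\|\cdot\|_{[-a,a]\times[0,a)}$.

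For (2)--(3): when $I=[a_1,a_2]$, any $(v(\cdot,0),w(\cdot,0))\in\calC_1$ is approximated uniformly on $[a_1,a_2]$ by piecewise-linear interpolants through a fine partition containing $a_1,a_2$ (slightly tilted so the pieces are strictly monotone); these are Lipschitz, have finitely many local extrema, and inherit $v\ge w$ and the endpoint conditions, hence lie in $\calX$. Density, the $1$-Lipschitz bound on $\calX$, and completeness of $\calC_2$ give a unique $1$-Lipschitz extension $\calC_1\to\calC_2$, which by Lemma~\ref{a22.6} coincides with $(v_*,w_*)$; in particular $(v_*,w_*)\in\calC_2$, so (3) holds here. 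When $I=\R$ and the data are continuous, I would fix $a>0$, approximate the data on $[-2a,2a]$ by piecewise-linear interpolants, and extend these to $\R$ with outer slopes $\pm\lambda$ as in \eqref{a13.1}--\eqref{a13.2}; these lie in $\calX$, and by the local estimate their images under $\calT$ are Cauchy in $\|\cdot\|_{[-a,a]\times[0,a)}$, converging to $(v_*,w_*)$ there. Hence $(v_*,w_*)$ is a locally uniform limit of elements of $\calC_2$, so it is jointly continuous; this proves (3) for $I=\R$ and exhibits $\calT(v(\cdot,0),w(\cdot,0))=(v_*,w_*)$ as a locally uniform limit of solutions $(v^n,w^n):=\calT(v_0^n,w_0^n)$ with $(v_0^n,w_0^n)\in\calX$.

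For part~(iii), write $(v,w)=(v_*,w_*)$. The constraint $v\ge w$ and, when $I=[a_1,a_2]$, the boundary conditions pass to the limit from $(v^n,w^n)$. If $D\subseteq I\times(0,\infty)$ is open with $v>w$ on $D$, then on any closed rectangle $R\subseteq D$ we have $v-w\ge c>0$, so $v^n>w^n$ on $R$ for large $n$, and by Theorem~\ref{m30.1} and Remark~\ref{m28.1} the characteristics of $v^n$ have slope $1$ and those of $w^n$ slope $-1$ on the interior of $R$; passing to the limit gives $v(x+\delta,t+\delta)=v(x,t)$ and $w(x-\delta,t+\delta)=w(x,t)$ for nearby points, i.e.\ \eqref{n10.4}--\eqref{n10.5} in $D$. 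If $D$ is open with $v=w\equiv c$ on $D$, fix $(x_0,t_0)$ at the center of a small square contained in $D$; then $v(\cdot,t_0)=c$ on an interval $(x_0-\rho,x_0+\rho)$, so for every $b>c$ the sublevel $\{v(\cdot,t_0)\le b\}$ contains $[x_0-\rho/2,x_0+\rho/2]$ while $\{w(\cdot,t_0)\ge b\}$ lies outside $(x_0-\rho,x_0+\rho)$; by the evolution of these sets established in the proof of Lemma~\ref{a22.6} (the sublevel of $v$ translates right at unit speed, the superlevel of $w$ left at unit speed, and they annihilate only upon contact), $x_0$ remains in $\{v(\cdot,t)\le b\}$ for all $b>c$ and in $\{w(\cdot,t)\ge b\}$ for all $b<c$ while $|t-t_0|<\rho/4$, so $v(x_0,t)\le c\le w(x_0,t)$ there, which with $v\ge w$ forces $v(x_0,t)=w(x_0,t)=c$, i.e.\ \eqref{n10.4}--\eqref{n10.5} in $D$. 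Finally, to verify \eqref{a5.4} I would use the time-shift consistency of \eqref{j17.1a}--\eqref{j17.8a} (proof of Lemma~\ref{a22.6}) to reduce to $s=0$; if $(\tilde v,\tilde w)$ as in \eqref{a5.4} exists on the triangle $U$, then $\tilde v>\tilde w$ on $U^{\circ}$ and the equations force $\tilde v(x,t)=\tilde v(x-t,0)=v(x-t,0)$ and $\tilde w(x,t)=w(x+t,0)$ there, whence $v(x-t,0)>w(x+t,0)$ whenever $x_1<x-t<x+t<x_2$; then for every $b$ slightly above $v(x-t,0)$ the set $\{w(\cdot,0)\ge b\}$ meets $(x-t,x+t]$ in a set of measure zero, so $\alpha(v,b,x-t)\ge x+t$, and \eqref{j17.5a}, \eqref{j17.7a} give $v(x,t)=v_*(x,t)=v(x-t,0)=\tilde v(x,t)$; symmetrically $w=\tilde w$ on $U^{\circ}$. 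I expect this last paragraph --- the frozen-region equations and condition \eqref{a5.4} for arbitrary continuous data --- to be the main obstacle, because the explicit triangle-by-triangle construction of Lemma~\ref{j6.5} is no longer available and the characteristic structure must be read off the formulas \eqref{j17.1a}--\eqref{j17.8a}; care is also needed to make the domain-of-dependence step used in the $I=\R$ localization fully rigorous from those formulas.
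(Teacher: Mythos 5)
Your overall strategy---defining the extension directly by the level-set formulas \eqref{j17.1a}--\eqref{j17.8a}, proving the $1$-Lipschitz bound by comparing the integrands in the definitions of $\alpha(v,b,x)$ and $\alpha(v_1,b+\eps,x)$, and then using density of $\calX$ together with completeness of $\calC_2$---is essentially the paper's proof; your step (1) and the domain-of-dependence localization for $I=\R$ match the paper's argument almost verbatim (the paper uses polynomials rather than piecewise-linear interpolants for density, which is immaterial), and your direct verification of \eqref{a5.4} from the formulas, via $\alpha(v,b,x-t)\ge x+t$ when a liquid continuation exists, is sound and in fact more explicit than the paper's treatment, which folds this requirement into a contradiction argument.

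The gap is in the frozen-region case of part (iii). You assume ``$v=w\equiv c$ on $D$,'' i.e.\ that the common value is \emph{constant} on $D$, and your argument genuinely uses this: the claim that $\{v(\cdot,t_0)\le b\}$ contains a fixed interval $(x_0-\rho,x_0+\rho)$ for every $b>c$ while $\{w(\cdot,t_0)\ge b\}$ stays outside it fails when $v=w$ is non-constant. The generic frozen region (e.g.\ above a freezing curve) has $v=w$ strictly increasing in $x$; then for $b$ slightly above $c=v(x_0,t_0)$ the superlevel set of $w$ approaches within distance of order $b-c$ of $x_0$, your $\rho$ degenerates as $b\downarrow c$, and you only obtain $v(x_0,t)\le b$ on a $b$-dependent time interval shrinking to a point, which does not give $v(x_0,\cdot)\equiv c$ on any fixed neighborhood of $t_0$. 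The statement to be proved concerns arbitrary open $D$ with $v=w$ on $D$, not $v=w$ constant. The argument can be repaired within your framework, but it needs the finer cancellation: on a frozen slice the integrand $\bone_{\{v\le b\}}-\bone_{\{w\ge b\}}$ equals $+1$ on $\{v=w<b\}$, $-1$ on $\{v=w>b\}$ and $0$ on $\{v=w=b\}$, and one must check that $\int_{x_0-t}^{y}$ stays nonnegative up to and slightly beyond $y=x_0+t$ because the measure of $\{v<b\}$ there at least balances that of $\{v>b\}$---i.e.\ the sublevel and superlevel sets are in contact and annihilate in place rather than being separated. (The paper avoids this computation: it approximates the data by elements of $\calX$, treats the case where $v(\cdot,0)=w(\cdot,0)$ is non-decreasing on the base of the triangle---where the approximants' characteristics are vertical and the conclusion passes to the uniform limit---and rules out the non-monotone case by contradiction.) As written, your proof of the frozen-region equations covers only the degenerate constant case.
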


\begin{proof}

Comparing \eqref{j21.1}-\eqref{j17.8b} and \eqref{j17.1a}-\eqref{j17.8a}, we see that 
\eqref{j17.1a}-\eqref{j17.8a} define a functional 
$\calT:\calX\to\calC_2$ mapping  $(v(\,\cdot\,,0),w(\,\cdot\,,0))$ to  solutions $(v(\,\cdot\,,\,\cdot\,),w(\,\cdot\,,\,\cdot\,))\in\calC_2$ to \eqref{n10.4}-\eqref{a5.4}.

(i) Suppose that $I=[a_1,a_2]$. Consider $(v(\,\cdot\,,0),w(\,\cdot\,,0)),(v_1(\,\cdot\,,0),w_1(\,\cdot\,,0))\in\calC_1$ and $\eps>0$. Assume that  $|v_1(x,0) - v(x,0)|\leq \eps$ and $|w_1(x,0) - w(x,0)|\leq \eps$ for  all $x\in I$. Consider any $x\in I$, $t\geq 0$ and $b\in \R$.
Suppose that $x\in A(v,b,t)$. Then, using \eqref{j18.5},
\begin{align}\label{j18.2}
v_1(x-t,0) \leq v(x-t,0)+\eps\leq b+\eps
\qquad \text{  or  }\qquad x-t \leq a_1.
\end{align}

We have
\begin{align*}
\bone_{\{z\in I,v_1(z,0) \leq b+\eps\}} \geq
\bone_{\{z\in I,v(z,0) \leq b\}},
\qquad
\bone_{\{z\in I,w_1(z,0) \geq b+\eps\}}\leq
\bone_{\{z\in I,w(z,0) \geq b\}},
\end{align*}
so
\begin{align*}
&\int_{x-t}^y \left(\bone_{\{z\in I,v_1(z,0) \leq b+\eps\}} 
+ \bone_{\{z \leq a_1\}}
- \bone_{\{z\in I,w_1(z,0) \geq b+\eps\}}- \bone_{\{z \geq a_2\}}\right)dz\\
&\geq
\int_{x-t}^y \left(\bone_{\{z\in I,v(z,0) \leq b\}} + \bone_{\{z \leq a_1\}}
- \bone_{\{z\in I,w(z,0) \geq b\}}- \bone_{\{z \geq a_2\}}\right)dz,
\end{align*}
and, using \eqref{j17.3},
\begin{align*}
&\alpha(v_1,b+\eps,x-t)\\
&= \inf\Bigg\{
y\geq x-t: \int_{x-t}^y \Big(\bone_{\{z\in I,v_1(z,0) \leq b+\eps\}} 
+ \bone_{\{z \leq a_1\}} \\
&\qquad\qquad\qquad
- \bone_{\{z\in I,w_1(z,0) \geq b+\eps\}}- \bone_{\{z \geq a_2\}}\Big)dz <0
\Bigg\}\\
 &\geq \inf\left\{
y\geq x-t: \int_{x-t}^y \left(\bone_{\{z\in I,v(z,0) \leq b\}} 
+ \bone_{\{z \leq a_1\}}
- \bone_{\{z\in I,w(z,0) \geq b\}}- \bone_{\{z \geq a_2\}}\right)dz <0
\right\}\\
&= \alpha(v,b,x-t).
\end{align*}
Hence, if $t\leq (\alpha(v,b,x-t)-(x-t))/2$ then $t\leq (\alpha(v_1,b+\eps,x-t)-(x-t))/2$. This, \eqref{j18.5} and \eqref{j18.2} imply that $x\in A(v_1,b+\eps,t)$. Hence, in view of \eqref{j17.7a}, $v_1(x,t) \leq v(x,t) +\eps$.
By symmetry, $|v_1(x,t) - v(x,t)| \leq\eps$. 
A similar bound for $w$ can be derived in an analogous way.
We have proved that
that if $|v_1(x,0) - v(x,0)|\leq \eps$ and $|w_1(x,0) - w(x,0)|\leq \eps$ for  all $x\in I$
then $|v_1(x,t) - v(x,t)|\leq \eps$ and $|w_1(x,t) - w(x,t)|\leq \eps$ for  all $x\in I$ and $t\geq 0$. In other words, the functional $\calT:\calX\to\calC_2$  is 1-Lipschitz.

Since polynomials are dense in the set of continuous functions,
the set
$\calX $ is dense in $\calC_1$ when $I=[a_1,a_2]$.
Thus we can extend the functional to $\calT:\calC_1\to\calC_2$ while preserving its 1-Lipschitz property.

 We will now prove part (iii) of the theorem in the case discussed so far, i.e. when $I=[a_1,a_2]$.
If $(v(x,t), w(x,t))\in \calT(\calC_1)$ then there is a sequence $(v_k(x,0), w_k(x,0))\in \calX$
converging to $(v(x,0), w(x,0))$ in $\|\,\cdot\|_I$ norm. Then
 $(v_k(x,t), w_k(x,t)) \to (v(x,t), w(x,t))$
in
$\|\,\cdot\,\|_{I\times[0,\infty)}$ norm.

Suppose that $v(x,t) > w(x,t)$ for some $(x,t)\in I\times [0,\infty)$. By continuity, the inequality holds in a neighborhood of $(x,t)$, so $v(y,s) > w(y,s)+\eps$ for some $\eps>0$ and a neighborhood $F$ of $(x,t)$. Then for some $k_1$,  $v_k(y,s) > w_k(y,s)+\eps/2$ for $k>k_1$ and $(y,s)\in F$. Since $v_k$'s and $w_k$'s are solutions to \eqref{n10.4}-\eqref{a5.4}, all characteristics of $v_k$ have slope 1 and  all characteristics of $w_k$ have slope $-1$ in $F$ for $k>k_1$. It follows that all characteristics of $v$ have slope 1 and  all characteristics of $w$ have slope $-1$ in $F$. In other words,
 $(v(x,t), w(x,t))$ is a
solution to \eqref{n10.4}-\eqref{a5.4} in $F$.

Next suppose that  $v(x,t) = w(x,t)$ for 
\begin{align*}
(x,t)\in
U:=\{(y,s):s>s_1, x_1+(s-s_1) < y < x_2-(s-s_1)\} .
\end{align*}
 Without loss of generality, suppose that $s_1=0$ and  $U=\{(y,s):s>0, x_1+s < y < x_2-s\} $.
We will consider two cases. 

First, suppose that $x\to v(x,0) = w(x,0)$ is non-decreasing on $[x_1,x_2]$. We can find strictly increasing $v_k(x,0)=w_k(x,0)$ satisfying Assumption \ref{m29.2}
and converging uniformly to $v(x,0)=w(x,0)$ on $[x_1,x_2]$ as $k\to\infty$. 
The characteristics of $v_k$ and $w_k$ have slope 0 inside $U$. Since $v(x,t)$ and $w(x,t)$ are uniform limits of $v_k(x,t)$ and $w_k(x,t)$ in $U$, the functions $t\to v(x,t)$ and $t\to w(x,t)$ must be constant for $x\in[x_1,x_2]$, as long as $(x,t)\in U$. 
We see that in this case, 
 $v(x,t)$ and $ w(x,t)$ are
solution to \eqref{n10.4}-\eqref{a5.4} in $U$.

If it is not true that $x\to v(x,0) = w(x,0)$ is non-decreasing on $[x_1,x_2]$
then
for some $x_3,x_4\in(x_1,x_2)$ and $\eps>0$, $x_3< x_4$, $v(x_3,0) >v(x_4,0) +\eps$
and $v(x_3,0)> v(x,0)> v(x_4,0)$ for all $x_3< x < x_4$. 
Let $\eps_1>0$ be such that $w(x,0) < v(x_3,0) -\eps_1$ for $x\in[x_3+(x_4-x_3)/4,x_4]$.
For $k\geq 1$, we can find $v_k(x,0)$ and $w_k(x,0)$ satisfying Assumption \ref{m29.2} and such that
$|v_k(x,0)-v(x,0)|<1/k$ and $|w_k(x,0)-w(x,0)|<1/k$ for $x_3\leq x \leq x_4$,
$v_k(x_3,0) > v(x_3,0)+1/(2k)$, 
$w_k(x,0) < w(x_3,0) = v(x_3,0)$ for $x_3\leq x \leq x_4$
and $w_k(x,0) < v(x_3,0) -\eps_1$ for $x\in[x_3+(x_4-x_3)/4,x_4]$.
The characteristic of $v_k$ emanating from $(x_3,0)$ will have slope 1,
i.e. $v_k(x_3+t,t)= v(x_3,0) $ for $t\in[0, (x_4-x_3)/2]$. 
We have $w_k(x_3+t,t)\leq v(x_3,0) -\eps_1$ for $t\in[(x_4-x_3)/4, (x_4-x_3)/2]$ because the value of  $w_k(x_3+t,t)$ with $t$ in the specified range is the value carried by a characteristic of $w_k$ emanating from a point $(x,0)$ with  $x\in[x_3+(x_4-x_3)/4,x_4]$. Since $\eps_1$ does not depend on $k$, passing to the limit yields $w(x_3+t,t)\leq v(x_3,0) -\eps_1 < v(x_3,0) = v_k(x_3+t,t)$ for $t\in[(x_4-x_3)/4, (x_4-x_3)/2]$. This contradicts the assumption that 
$v(x,t) = w(x,t)$ for $(x,t)\in U$ and thus completes the proof that 
$v(x,t)$ and $ w(x,t)$ are
solutions to \eqref{n10.4}-\eqref{a5.4} in $U$.
\medskip

(ii) Suppose that $I=\R$ and $a>0$. Consider $(v(\,\cdot\,,0),w(\,\cdot\,,0)),(v_1(\,\cdot\,,0),w_1(\,\cdot\,,0))\in\calC_1$ and $\eps>0$. Assume that  $|v_1(x,0) - v(x,0)|\leq \eps$ and $|w_1(x,0) - w(x,0)|\leq \eps$ for  all $x\in [-2a,2a]$. Consider any $x\in [-2a,2a]$, $t\in[0,a]$ and $b\in \R$.
Suppose that $x\in A(v,b,t)$. Then, using \eqref{j17.5a},
\begin{align}\label{j18.2a}
v_1(x-t,0) \leq v(x-t,0)+\eps\leq b+\eps.
\end{align}

We have
\begin{align*}
\bone_{\{z\in I,v_1(z,0) \leq b+\eps\}} \geq
\bone_{\{z\in I,v(z,0) \leq b\}},
\qquad
\bone_{\{z\in I,w_1(z,0) \geq b+\eps\}}\leq
\bone_{\{z\in I,w(z,0) \geq b\}},
\end{align*}
so
\begin{align*}
&\int_{x-t}^y \left(\bone_{\{z\in I,v_1(z,0) \leq b+\eps\}} 
- \bone_{\{z\in I,w_1(z,0) \geq b+\eps\}}\right)dz\\
&\geq
\int_{x-t}^y \left(\bone_{\{z\in I,v(z,0) \leq b\}} 
- \bone_{\{z\in I,w(z,0) \geq b\}}\right)dz,
\end{align*}
and, using \eqref{j17.1a},
\begin{align*}
&\alpha(v_1,b+\eps,x-t)\\
&= \inf\left\{
y\geq x-t: \int_{x-t}^y \left(\bone_{\{z\in I,v_1(z,0) \leq b+\eps\}} 
- \bone_{\{z\in I,w_1(z,0) \geq b+\eps\}}\right)dz <0
\right\}\\
 &\geq \inf\left\{
y\geq x-t: \int_{x-t}^y \left(\bone_{\{z\in I,v(z,0) \leq b\}} 
- \bone_{\{z\in I,w(z,0) \geq b\}}\right)dz <0
\right\}\\
&= \alpha(v,b,x-t).
\end{align*}
Hence, if $t\leq (\alpha(v,b,x-t)-(x-t))/2$ then $t\leq (\alpha(v_1,b+\eps,x-t)-(x-t))/2$. This, \eqref{j17.5a} and \eqref{j18.2a} imply that $x\in A(v_1,b+\eps,t)$. Hence, in view of \eqref{j17.7a}, $v_1(x,t) \leq v(x,t) +\eps$.
By symmetry, $|v_1(x,t) - v(x,t)| \leq\eps$. 
A similar bound for $w$ can be derived in an analogous way.
We have proved that
that if $|v_1(x,0) - v(x,0)|\leq \eps$ and $|w_1(x,0) - w(x,0)|\leq \eps$ for  all $x\in [-2a,2a]$
then $|v_1(x,t) - v(x,t)|\leq \eps$ and $|w_1(x,t) - w(x,t)|\leq \eps$ for  all $x\in [-a,a]$ and $t\in[ 0,a]$. In other words, the functional $\calT:\calX\to\calC_2$  is 1-Lipschitz relative to the norms  $\|\,\cdot\,\|_{[-2a,2a]}$ and $\|\,\cdot\,\|_{[-a,a]\times[0,a)}$.

Since polynomials are dense in the set of continuous functions on any finite interval,
the set
$\calX $ is dense in $\calC_1$ in the norm $\|\,\cdot\,\|_{[-2a,2a]}$.
Thus we can extend the functional $\calT$ to $\calT:\calC_1\to\calC_2$ in such a way that  is 1-Lipschitz relative to the norms  $\|\,\cdot\,\|_{[-2a,2a]}$ and $\|\,\cdot\,\|_{[-a,a]\times[0,a)}$ for every $a>0$.

The proof of (iii) when $I=\R$ is completely analogous to  that in case when $I=[a_1,a_2]$ so it is omitted.
\end{proof}

\section{Uniqueness of solutions}\label{sec:uniq}

\begin{proof}[Proof of Theorem \ref{m28.5}]

It will suffice to show that if $v$ and $w$ are solutions to \eqref{n10.4}-\eqref{a5.4}
in $U:=\{(x,t): t\geq 0, x\in( z-\eps + t , z+\eps -t)\cap I\}$ 
and characteristics are subsonic then the solutions are unique.
It will be enough to prove this in every case considered in the proof of Lemma \ref{j6.5}.   We will limit the present proof to the most complicated cases (iv), (vii) and (viii). 
Recall  Definition \ref{m26.3} and the definitions and properties of a freezing curve $\calF$ and a thawing curve $\calT$ given in Lemma \ref{m22.1}.
\medskip

(a) Recall case (iv) in the proof of  Lemma \ref{j6.5}. It is assumed that 
 $v(z,0) = w(z,0)$, the functions $x\to v(x,0)$ and $x\to w(x,0)$ are strictly increasing on $[z-\eps, z]$ and $x\to w(x,0)$ is strictly decreasing on $[z, z+\eps]$.
 
Let 
\begin{align*}
\wh U&=\{(x,t): t\geq 0, x\in( z-\eps + t , z -t)\}.
\end{align*}
Since $x\to v(x,0)$ and $x\to w(x,0)$ are strictly increasing on $[z-\eps, z]$, the set
 $\calF$ in $\wh U$ is non-empty.  The point $(z,0)$ is in $\calF$ because $v(z,0) = w(z,0)$.
Consider any $(x_1,t_1)$ below $\calF$ in the sense that there exists $(x_1,t_2)\in\calF$ with $0\leq t_1 < t_2$. Let $b = v(x_1 - t_2,0) = w(x_1+t_2,0)$. Then $v(y,0) > b$ for all $y\in[ x_1-t_2,z]$. A subsonic characteristic from $(x_1,t_1)$ must have an endpoint at $(y,0)$ for some $y\in[x_1-t_1,x_1]$. Hence, $v(x_1,t_1) >b$. A similar argument shows that $w(x_1,t_1)< b$. It follows that $v(x_1,t_1) >w(x_1,t_1)$ for all $(x_1,t_1)$ below $\calF$. Therefore, all characteristics of $v$ in this region have slope 1 and all characteristics of $w$ have slope $-1$.

For any $(x_4,t_4), (x_5,t_5)\in \calF$ with $x_4 < x_5$, the above discussion shows that we have $v(x_4,t_4)=w(x_4,t_4)$, $v(x_5,t_5)=w(x_5,t_5)$, $v(x_4,t_4)<v(x_5,t_5)$, and  $w(x_4,t_4)<w(x_5,t_5)$.

Next consider $(x_1,t_1) \in \wh U^+$, i.e. in $\wh U$ but above the freezing line.
Let $t_0$ be such that $(x_1,t_0)\in \calF$.
A subsonic characteristic of $v$ from $(x_1,t_1)$ crosses $\calF$ at a point $(x_6,t_6)$ with $x_6 \leq x_1$ so $v(x_1,t_1) = v(x_6,t_6) \leq v(x_1,t_0)$.
A subsonic characteristic of $w$ from $(x_1,t_1)$ crosses $\calF$ at a point $(x_7,t_7)$ with $x_7 \geq x_1$ so $w(x_1,t_1) = w(x_7,t_7) \geq w(x_1,t_0)=v(x_1,t_0)$. Hence, $v(x_1,t_1) \leq w(x_1,t_1)$. This is possible only if $v(x_1,t_1) = w(x_1,t_1)$. We conclude that all characteristics of $v$ and $w$ in $\wh U^+$ have slope 0.

We will now consider points in $U_r$. Recall that $(z,0)\in \calT$.
First suppose that $(x_1,t_1) \in  U_r^-$. In other words, $(x_1,t_1)$ lies below $\calT$, i.e., there exists $s_1\geq t_1$  such that $(x_1, s_1)\in \calT$. Then $x_1\leq z$ and $v(x_1, z-x_1) = w (x_1+s_1,0)$.
The function $x\to v(x, z-x)$ is increasing for $x\in[z-\eps,z]$ and the function $x\to w(x,0)$ is decreasing for $x\in[z,z+\eps]$. 
A subsonic characteristic of $v$ from $(x_1,t_1)$ passes through a point $(y_1,z-y_1)$ with $y_1 \leq x_1$ so $v(x_1,t_1) = v(y_1,z-y_1) \leq v(x_1,z-x_1)$.
A subsonic characteristic of $w$ from $(x_1,t_1)$ can pass though a point
$(y_2,z-y_2)$ with $x_1 \leq y_2 \leq z$
or through  point $(y_3,0)$ with $z\leq y_3 \leq x_1+t_1$.
In the first case $w(x_1,t_1) = w(y_2,z-y_2) \geq w(x_1,z-x_1)=v(x_1,z-x_1)$.
In the second case  $w(x_1,t_1) = w(y_3,0) \geq w(x_1+t_1,0)\geq w(x_1+s_1,0)=v(x_1,z-x_1)$. Hence, $v(x_1,t_1) \leq w(x_1,t_1)$. This is possible only if $v(x_1,t_1) = w(x_1,t_1)$. We conclude that all characteristics of $v$ and $w$ must have slope 0 in $U_r^-$.

We will argue that the whole interior of the region $  U_r^+$ is a liquid zone.

All characteristics of $v$ in the triangle $\wt U:=\{(x,t):t>0, z+t < x < z+\eps -t\}$ have slope 1 and all characteristics of $w$ have slope $-1$ by \eqref{a5.4}.

Suppose that $(x_1,t_1) \in  U_r^+ \setminus \wt U$. 
In view of \eqref{m22.6}, $z< x_1+t_1 < z+\eps$.
Let $z_1 = \inf\{x\in[z-\eps,x_1+t_1]: v(x, x_1+t_1 -x)>w(x, x_1+t_1 -x)\}$. Note that the set in the definition of $z_1$ is non-empty because the condition is satisfied for all  $(x, x_1+t_1 -x) \in \wt U$; for example, it is satisfied for $x$ very close to $x_1+t_1$. Hence, $z_1 < x_1+t_1$.

If $(z_1, x_1+t_1 -z_1)\in \calT$ for every $(x_1,t_1) \in  U_r^+ \setminus \wt U$
then  the whole interior of the region $  U_r^+$ is a liquid zone.

Suppose that for some $(x_1,t_1) \in  U_r^+ \setminus \wt U$,
$(z_1, x_1+t_1 -z_1)\notin \calT$.
Then, by continuity, 
\begin{align}\label{a4.1}
v(z_1, x_1+t_1 -z_1)=w(z_1, x_1+t_1 -z_1)=w(x_1+t_1,0).
\end{align}
The second equality holds because $v>w$ in a neighborhood of the open line segment connecting $(z_1, x_1+t_1 -z_1) $ and $(x_1+t_1,0)$ according to the definition of $z_1$, and, therefore, the characteristic of $w$ must have slope $-1$ in this neighborhood.

The halfline starting at $w(x_1+t_1),0)$ and passing through $(z_1, x_1+t_1 -z_1)$ may cross $\calT$ or not, resulting in (a) or (b) below.

(a) $(z_2, x_1+t_1 -z_2)\in \calT$ for some $z_2 < z_1$.

(b) $v(z_3, z-z_3)> w(x_1+t_1),0)=w(z_3+(x_1+t_1-z_3),0)$ for all $z_3$ such that $(z_3, s_2)\in \calT$ for some $s_2 \geq 0$. 

In case (a), a subsonic characteristic of $v$ from $(z_1, x_1+t_1 -z_1)$ may
intersect $\calT$ at $(y_4,s_3)$ with $y_4 >z_2$. Then, in view of \eqref{a4.1}, 
\begin{align*}
v(z_1, x_1+t_1 -z_1)&=v(y_4,s_3) > v(z_2, x_1+t_1 -z_2)\\
& = w(z_2+(x_1+t_1 -z_2),0)
= w(x_1+t_1,0)=v(z_1, x_1+t_1 -z_1),
\end{align*}
a contradiction.

Another possibility is that a subsonic characteristic of $v$ from $(z_1, x_1+t_1 -z_1)$
has an endpoint $(z_3,0)$ with $z_3\in[z,z_1]$. Recall that $z_1 < x_1+t_1$ and use  \eqref{a4.1} to see that $v(z_1, x_1+t_1 -z_1) = v(z_3,0) \geq w(z_3,0) > w(x_1+t_1,0) =v(z_1, x_1+t_1 -z_1)$, once again a contradiction.

In  case (b),
a subsonic characteristic of $v$ from $(z_1, x_1+t_1 -z_1)$ may
intersect $\calT$ at $(y_5,s_4)$. We use the condition in (b) to
choose $z_4 < y_5$ such that $v(z_4, z-z_4)> w(z_4+(x_1+t_1-z_4),0)$ and $(z_4, s_5)\in \calT$ for some $s_5 \geq 0$. Then
\begin{align*}
v(z_1, x_1+t_1 -z_1)&=v(y_5,s_4) > v(z_4,s_5)=v(z_4, z-z_4)\\
& > w(z_4+(x_1+t_1 -z_4),0)
= w(x_1+t_1,0)=v(z_1, x_1+t_1 -z_1),
\end{align*}
a contradiction.

It may also happen that  a subsonic characteristic of $v$ from $(z_1, x_1+t_1 -z_1)$
has an endpoint $(z_5,0)$ with $z_5\in[z,z_1]$. This leads to a contradiction, just like in case (a).

We have completed the proof that the whole interior of the region $  U_r^+$ is a liquid zone.
 Therefore, all characteristics of $v$ in $ U_r^+$ have slope 1 and all characteristics of $w$ have slope $-1$.
 
\medskip

(b) Recall case (vii) in the proof of  Lemma \ref{j6.5}.
Let $U= \{(x,t): t\geq 0, a_1  < x < a_1+\eps - t\}$.
We assume that $I=[a_1,a_2]$, $z= a_1 $,  $v( a_1 , t) = w( a_1 ,t)$ for $t\in[0,\eps]$, and the function  $x\to w(x,0)$ is strictly decreasing on $[ a_1 , a_1 + \eps]$.
By \eqref{a5.4} (ii), the triangle $U_1 := \{(x,t): t\geq 0, a_1 + t < x < a_1+\eps - t\}$ is a liquid zone.

We will argue that the interior of $U$ is a liquid zone. Suppose otherwise. Let 
 $(x_1,t_1) \in U \setminus U_1$ be such that $v(x_1,t_1)=w(x_1,t_1)$ and $x_1>a_1$.
Let $x_2$ be the infimum of $y\in[x_1,a_1+\eps]$ such that $v(y,x_1+t_1-y) > w(y,x_1+t_1-y)$.
Then $v(x_2,x_1+t_1-x_2) = w(x_2,x_1+t_1-x_2)$.
 
A subsonic characteristic of $v$ from $(x_2,x_1+t_1-x_2)$ may cross the boundary of $U_1$ at a point $(a_1+t_2, t_2)$ with $a_1+t_2\leq x_2$. Then $v(x_2,x_1+t_1-x_2) = v(a_1+t_2, t_2)
= v(a_1,0) = w(a_1,0) $. 
A subsonic characteristic of $w$ from $(x_2,x_1+t_1-x_2)$ must cross the boundary of $U_1$ at a point $(a_1+t_3, t_3)$ with $a_1+t_3\geq x_2\geq x_1 >a_1$. Then $w(x_2,x_1+t_1-x_2) = w(a_1+t_3, t_3)
= w(a_1+2t_3,0) < w(a_1,0) $. Hence, $v(x_2,x_1+t_1-x_2) >w(x_2,x_1+t_1-x_2)$, a contradiction.

If a subsonic characteristic of $v$ from $(x_2,x_1+t_1-x_2)$ does not cross the boundary of $U_1$ then it must end at $(a_1,t_4)$ for some $t_4\in[0 , x_1+t_1-x_2)$. Then $w(x_1+t_1,0) = w(x_2,x_1+t_1-x_2) =v(x_2,x_1+t_1-x_2) =v(a_1,t_4)=w(a_1,t_4)$.
A subsonic characteristic of $w$ from $(a_1,t_4)$ must have an end at $(x_3,0)$ with $x_3\in[a_1, a_1+t_4]$. Thus
$w(a_1,t_4) = w(x_3,0)\geq w(a_1+t_4,0) >w(a_1 +x_1+t_1-x_2,0)$
and, therefore, $w(x_1+t_1,0) >w(a_1 +x_1+t_1-x_2,0)$. This is impossible 
because $x\to w(x,0) $ is decreasing and $a_1-x_2 < 0$.

We proved that the whole interior of  $  U$ is a liquid zone.
All characteristics of $v$ in $ U$ have slope 1 and all characteristics of $w$ have slope $-1$.

\medskip

(c) We will analyze case (viii) in the proof of  Lemma \ref{j6.5}.
Let $U= \{(x,t): t\geq 0, a_1  < x < a_1+\eps - t\}$.
We assume that $I=[a_1,a_2]$, $z= a_1 $,  $v( a_1 , t) = w( a_1 ,t)$ for $t\in[0,\eps]$,
and the functions $x\to v(x,0)$ and $x\to w(x,0)$ are strictly increasing on $[ a_1 , a_1 + \eps]$. 

If the definition of $\wh U$ in \eqref{m25.1} is replaced with 
\begin{align*}
\wh U&=\{(x,t): t\geq 0, x\in( a_1 , a_1+\eps -t)\},
\end{align*}
then the rest of the statement of Lemma \ref{m22.1} (i) and its proof remain valid. Hence, we have a well defined freezing curve $\calF$ and it follows from our current assumptions that $\calF$ is non-empty, it cuts $U$ into two connected sets $\wh U^-$ and $\wh U^+$, and the left endpoint of $\calF$ is $(a_1,0)$.

Let $\wh U^*=\{(x,t)\in \wh U^+: t\geq 0,a_1 + t < x< a_1+\eps-t\}$. 
The set $\wh U^*$ is a frozen zone and $\wh U^-$ is a liquid zone by the same argument that was applied  in part (a) of this proof to show that  $\wh U^+$ was frozen.

It remains to analyze $\wh U^+\setminus \wh U^*$. We will show that it is frozen. Suppose otherwise. 
Then there is $(x_1,t_1) \in \wh U^+ \setminus \wh U^*$ such that $v(x_1,t_1)> w(x_1,t_1)$ and $x_1>a_1$.
Let $t_2$ be the infimum of $t$ such that for some $x\geq x_1$, $(x,t)\in \wh U^+ \setminus \wh U^*$ and $v(x,t)> w(x,t)$. 
Let $(x_2,t_3)\in \wh U^+ \setminus \wh U^*$ be such that $v(x_2,t_3)> w(x_2,t_3)$, $x_2\geq x_1$
and $t_3 -t_2 < x_1-a_1$.
Let $t_4$ be such that $(x_2,t_4) \in \calF$ and note that the line segment $J$ connecting $(x_2, t_4)$ and $(x_2, t_2)$ is in the frozen zone so $v(x,t)=w(x,t) = v(x_2,t_4)=w(x_2,t_4)$ for all $(x,t)\in J$.
 
A subsonic characteristic of $w$ from $(x_2,t_3)$ crosses $\calF$ at a point $(x_3,t_5)$ with $x_3\geq x_2$. Hence 
\begin{align}\label{a6.1}
v(x_2,t_3) > w(x_2,t_3)=w(x_3,t_5)\geq w(x_2, t_4).
\end{align}

If a subsonic characteristic of $v$ from $(x_2,t_3)$ crosses $\calF$ then it crosses it at a point $(x_4, t_6)$ with $x_4 \leq x_2$. Then $v(x_2,t_3) = v(x_4,t_6) \leq v(x_2, t_4) = w(x_2,t_4)$,  contradicting \eqref{a6.1}.

If a subsonic characteristic of $v$ from $(x_2,t_3)$ does not cross $\calF$ then it has an endpoint $(a_1, t_7)$ with $t_7 \leq t_3 -(x_2-a_1)\leq t_2$. Then $v(x_2, t_3)=v(a_1,t_7)=w(a_1,t_7)$.

A characteristic of $w$ from $(a_1,t_7)$ either crosses $\calF$ at a point $(x_5,t_8)$ with $x_5 \leq x_2$ or it crosses the line segment $J$ at a point $(x_6,t_9)$. In the first case,
$w(a_1,t_7) = w (x_5, t_8) \leq w(x_2,t_4)$ and, therefore, $v(x_2, t_3) \leq w(x_2,t_4)$, a contradiction with \eqref{a6.1}. In the second case, $v(x_2,t_3) =  w(a_1,t_7) = w(x_6,t_9) = w(x_2,t_4)$, once again a contradiction with \eqref{a6.1}.
This proves that $\wh U^+$ is frozen.
\end{proof}

\section{Properties of solutions}\label{sec:prop}

\subsection{Eventual freezing}

\begin{proposition}\label{a22.10}
If $I = [a_1,a_2]$ then for all $s,t\geq 2(a_2-a_1)$
we have $v(x,t)=v(x,s)=w(x,t)=w(x,s)$ and  $x\to v(x,t)$ is non-decreasing. 
\end{proposition}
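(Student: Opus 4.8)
The plan is to exploit the sublevel/superlevel set description from Section~\ref{sec:gen}, which reduces the dynamics to a clean geometric picture: for each level $b$, the set $\calA(v,b,t)$ translates right at unit speed, $\calA(w,b,t)$ translates left at unit speed, and where they overlap they annihilate each other at unit rate. Concretely, I would first reduce to initial data satisfying Assumption~\ref{m29.2} and pass to the limit using the $1$-Lipschitz property of $\calT$ from Theorem~\ref{j22.1}, so it suffices to prove the claim for such data and for the explicit formulas \eqref{j17.1a}--\eqref{j17.8a}. Then, for the boundary value problem on $I=[a_1,a_2]$, recall from Remark~\ref{a24.1}(iii) that extending $v(\cdot,0)=w(\cdot,0)=-\infty$ for $x<a_1$ and $=+\infty$ for $x>a_2$ makes the half-line formulas \eqref{j17.1a}--\eqref{j17.6a} agree with the container versions. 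Under this extension, for any $b\in\R$ the set $\{v(z,0)\le b\}$ contains $(-\infty,a_1)$ and the set $\{w(z,0)\ge b\}$ contains $(a_2,\infty)$, so the ``left reservoir'' supplies $v$-sublevel mass and the ``right reservoir'' supplies $w$-superlevel mass.

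The key step is a length/annihilation bookkeeping argument. Fix $b$ and let $L(t)=\Leb\big(\calA(v,b,t)\cap[a_1,a_2]\big)$ and $R(t)=\Leb\big(\calA(w,b,t)\cap[a_1,a_2]\big)$. I claim $L(t)$ and $R(t)$ are nonincreasing in $t$, and strictly decreasing (at rate at least $1$, counting boundary inflow) until one of them is exhausted inside $[a_1,a_2]$. The point is that once a component of $\calA(v,b,t)$ reaches the right side, it either meets a $w$-superlevel component and annihilates, or it exits past $a_2$, where the reservoir $\{w\ge b\}$ waits and annihilates it; symmetrically on the left. Because $\Leb\big(\{v(\cdot,0)\le b\}\cap[a_1,a_2]\big)+\Leb\big(\{w(\cdot,0)\le b\}\cap[a_1,a_2]\big)=a_2-a_1$ is the relevant conserved-up-to-annihilation total, and each set has length at most $a_2-a_1$ and travels at speed $1$ across a window of width $a_2-a_1$, everything in $[a_1,a_2]$ is annihilated (or has exited) by time $2(a_2-a_1)$. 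Hence for $t\ge 2(a_2-a_1)$, for every $b$, $\calA(v,b,t)\cap[a_1,a_2]=\calA(w,b,t)\cap[a_1,a_2]$ up to a set of measure zero and both are determined by the stationary ``matched'' configuration; this gives $v(x,t)=w(x,t)$ for all $x$ and all $t\ge 2(a_2-a_1)$, and that the common value is independent of such $t$ (there is no surviving moving piece left to change it).

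Finally, monotonicity: once the system is frozen, $v(x,t)=w(x,t)=:m(x)$ and $\calA(v,b,t)=\{m\le b\}$ must be an ``up-set'' relative to the reservoir structure, i.e. a half-line $[c_b,a_2]$ (or its complement), because in the limit the annihilation pairs each $v$-sublevel point at position $x$ with a $w$-superlevel point to its right, forcing $\{m\le b\}$ to be an interval abutting $a_2$; equivalently $m$ is nondecreasing. I would verify this directly from \eqref{j17.3}--\eqref{j18.6} by showing $\alpha(v,b,x)=x$ for all $x\in[a_1,a_2]$ once $t\ge 2(a_2-a_1)$ precisely on the complement of a terminal half-line, which is exactly the statement that the residual $v$-sublevel set is $[c_b,a_2]$.

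The main obstacle I anticipate is making the annihilation bookkeeping rigorous directly from the explicit formulas \eqref{j17.1a}--\eqref{j18.6} rather than from the heuristic ``two sets colliding'' picture — in particular tracking the interaction with the infinite reservoirs and proving the uniform time bound $2(a_2-a_1)$ holds simultaneously for \emph{all} levels $b$. One clean way around this is to establish the bound for the dense class satisfying Assumption~\ref{m29.2} using the freezing/thawing curve geometry of Lemma~\ref{m22.1} together with property (x) of Section~\ref{j26.1} (freezing boundaries have slope in $[-1,1]$, thawing boundaries slope outside $[-1,1]$) to conclude that the liquid zone cannot survive past $2(a_2-a_1)$, and then transfer to general continuous data by the $1$-Lipschitz continuity of $\calT$.
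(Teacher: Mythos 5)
Your overall strategy -- describe the dynamics through the annihilating sublevel/superlevel sets with the reservoirs of Remark \ref{a24.1}(iii), show everything inside $[a_1,a_2]$ has finished annihilating by time $2(a_2-a_1)$, and read off $v=w$ and monotonicity from the terminal configuration of level sets -- is exactly the route the paper takes. The paper's execution is shorter and more direct: it observes from \eqref{j17.3} that for $t\ge 2(a_2-a_1)$ every base point $x=z-t$ with $z\in I$ lies in the left reservoir $x\le a_1-(a_2-a_1)$, and for such $x$ the membership condition in \eqref{j18.5} reduces to $z\le (a_1+a_2+C_b)/2$ with $C_b=\int_{a_1}^{a_2}(\bone_{\{v\le b\}}-\bone_{\{w\ge b\}})\,dz$, which is independent of $t$; hence $A(v,b,t)$ is literally constant for $t\ge 2(a_2-a_1)$, and any residual liquid zone or failure of monotonicity would force further annihilation, contradicting that constancy.

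However, several of your intermediate assertions are wrong as stated and would derail a proof built on them. (1) $L(t)=\Leb(\calA(v,b,t)\cap I)$ is \emph{not} nonincreasing: the left reservoir injects $v$-sublevel mass at $a_1$ at unit rate, so $L$ can grow (e.g.\ when $\{v(\cdot,0)\le b\}\cap I=\emptyset$). (2) The identity $\Leb(\{v(\cdot,0)\le b\}\cap I)+\Leb(\{w(\cdot,0)\le b\}\cap I)=a_2-a_1$ is false; the relevant conserved quantity is the \emph{difference} $\Leb\{v\le b\}-\Leb\{w\ge b\}$ (cf.\ Proposition \ref{a22.1}), suitably corrected by the reservoirs. (3) The terminal configuration is not $\calA(v,b,t)\cap I=\calA(w,b,t)\cap I$; rather these two sets \emph{partition} $I$ into complementary half-intervals $[a_1,c_b]$ and $[c_b,a_2]$ with $c_b=(a_1+a_2+C_b)/2$, and it is this complementarity (an up-set and a down-set in $b$ meeting at the same value) that yields $v=w$. (4) Consequently $\{m\le b\}$ abuts $a_1$, not $a_2$; the orientation you wrote would give $m$ non\emph{in}creasing, contradicting your own (correct) conclusion. (5) The plan to verify ``$\alpha(v,b,x)=x$ for $x\in[a_1,a_2]$'' is off target, since for $t\ge 2(a_2-a_1)$ the only base points that matter satisfy $x\le a_1-(a_2-a_1)$. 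Each of these is fixable, and once fixed your argument collapses to the paper's, but as written the key deductions ($v=w$ and non-decreasing) do not follow from the statements you made.
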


\begin{proof}
Let $a_*=a_2-a_1$.
It follows from \eqref{j17.3} that for every $b$
and $x,y \leq a_1 -a_*$, $\alpha( v,b,x) -x=\alpha( v,b,y) -y$.
If $z\in I$ and $t \geq 2 a_*$ then $z-t \leq a_1 -a_*$.
Then \eqref{j18.5} implies that for any $b\in\R$, $z\in I$ and $t_1,t_2 \geq 2 a_*$, $A(v,b,t_1) = A(v,b, t_2)$. This and \eqref{j17.7a} show that 
$v(x,t_1) = v(x, t_2)$ for any $b$ and $t_1,t_2 \geq a_2-a_1$.
The proof for $w$ is analogous.

Suppose that $t\geq 2a_*$.
If $v(x,t)> w(x,t)$ for some $x$ then there is $b$ such that $A(v,b,t)\cup A(w,b,t)\neq I$.
If $v(x,t)= w(x,t)$ for all $x$ and
 $x\to v(x,t)$ is not non-decreasing then for some $b$, the set $A(v,b,t)$  contains an interval  to the left of an interval  in $A(w,b,t)$, and both intervals  have positive length. 
 In either case, parts of $A(v,b,t)$ will move to the right and parts of $A(w,b,t)$ will move to the left after time $t$ (see Step 1 of the proof of Lemma \ref{a22.6}) and that will result in some annihilation and, therefore, in some changes to $A(v,b,s)$ and $A(w,b,s)$ for $s>t$, contradicting the first part of the proof.
\end{proof}

\subsection{Monotonicity of variation}

Recall the definition of total variation of a function $f: \R\to \R$,
\begin{align*}
\bV(f) = \sup\sum_{k=1}^n |f(x_k)-f(x_{k-1})|,
\end{align*}
where the supremum is taken over all $n$ and all sequences
$x_k$ such that $ x_0\leq x_1 \leq \dots \leq x_n $.
We say that \(f\) has finite total variation if \(V(f)<\infty\).\\

\begin{proposition}
  Suppose that $I=\R$ and $v$ and $w$ are solutions to
  \eqref{n10.4}-\eqref{a5.4} in the sense of Theorem
  \ref{j22.1} with arbitrary continuous initial
  conditions. If the intitial conditions have finite total
  variation, then $t\to \bV(v(\,\cdot\,,t))$ and
  $t\to \bV(w(\,\cdot\,,t))$ are non-increasing.
\end{proposition}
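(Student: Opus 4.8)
The plan is to express $\bV(v(\,\cdot\,,t))$ through level sets and exploit the fact, implicit in Lemma~\ref{a22.6}, that the sublevel sets of $v$ and the superlevel sets of $w$ evolve by rigid translation with annihilation; recall from Theorem~\ref{j22.1} that the solution is given by the explicit formulas \eqref{j17.1a}--\eqref{j17.8a}. The first step is a reduction to ``nice'' data: initial conditions that are piecewise linear with finitely many breakpoints, constant outside a compact set, and satisfy $v(\,\cdot\,,0)>w(\,\cdot\,,0)$ strictly. Given continuous $v(\,\cdot\,,0)\ge w(\,\cdot\,,0)$ of finite total variation — so the limits $L^v_\pm:=\lim_{x\to\mp\infty}v(x,0)$ exist — take on a large interval $[-R,R]$ a fine grid, let $g^v,g^w$ be the piecewise linear interpolants of $v(\,\cdot\,,0),w(\,\cdot\,,0)$ on it extended by constants, and put $v^{(k)}(\,\cdot\,,0)=g^v+\eps_k$, $w^{(k)}(\,\cdot\,,0)=g^w-\eps_k$. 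For a suitable choice of $R$, the grid, and $\eps_k\to0$, the pair $(v^{(k)},w^{(k)})$ satisfies Assumption~\ref{m29.2} with strict inequality, converges uniformly to $(v(\,\cdot\,,0),w(\,\cdot\,,0))$, and satisfies $\bV(v^{(k)}(\,\cdot\,,0))\le\bV(v(\,\cdot\,,0))$, $\bV(w^{(k)}(\,\cdot\,,0))\le\bV(w(\,\cdot\,,0))$ (piecewise linear interpolation does not increase total variation; adding a constant does not change it). By Theorem~\ref{j22.1}(ii) the solutions converge uniformly on compact sets at each fixed time, and total variation is lower semicontinuous under pointwise convergence; hence, once the proposition is proved for nice data in the form $\bV(v(\,\cdot\,,t))\le\bV(v(\,\cdot\,,0))$ for all $t\ge0$, it follows for arbitrary data that $\bV(v(\,\cdot\,,t))\le\liminf_k\bV(v^{(k)}(\,\cdot\,,t))\le\liminf_k\bV(v^{(k)}(\,\cdot\,,0))\le\bV(v(\,\cdot\,,0))$, and likewise for $w$. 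To pass from this ``from time zero'' statement to genuine monotonicity one restarts the equation: $v(\,\cdot\,,s),w(\,\cdot\,,s)$ are again continuous, ordered, and of finite total variation by what was just shown, and by Step~6 of the proof of Lemma~\ref{a22.6} the solution started from these data at time $\theta$ equals the original solution at time $s+\theta$, so the from-time-zero inequality yields $\bV(v(\,\cdot\,,s+\theta))\le\bV(v(\,\cdot\,,s))$.

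For nice data, $v(\,\cdot\,,t)$ is continuous, piecewise monotone with finitely many pieces, and constant outside a compact set (a straightforward consequence of Lemma~\ref{m26.1}), so the Banach indicatrix theorem gives $\bV(v(\,\cdot\,,t))=\int_\R \#\{x:v(x,t)=b\}\,db$. For all $b$ outside the finitely many local extremal values of $v(\,\cdot\,,t)$ and outside $\{L^v_-,L^v_+\}$, the set $\{x:v(x,t)=b\}$ consists exactly of the endpoints of the finitely many closed intervals composing $\{x:v(x,t)\le b\}$, so $\#\{x:v(x,t)=b\}=2\,N_v(b,t)-j_v(b)$, where $N_v(b,t)$ is the number of connected components of $\{x:v(x,t)\le b\}$ and $j_v(b)\in\{0,1,2\}$ counts how many of them are unbounded. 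Since $j_v(b)$ depends only on whether $L^v_-\le b$ and $L^v_+\le b$, it is independent of $t$; hence it suffices to prove that $t\mapsto N_v(b,t)$ is non-increasing, and, by the mirror-image argument with superlevel sets, that $t\mapsto N_w(b,t)$ (the number of components of $\{x:w(x,t)\ge b\}$) is non-increasing.

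By Lemma~\ref{a22.6}, $\{x:v(x,t)\le b\}=A(v,b,t)=\{x+t:\ v(x,0)\le b,\ 2t\le\alpha(v,b,x)-x\}$. Decompose $\{x:v(x,0)\le b\}$ into its maximal subintervals (``runs'') $[p_i,q_i]$. On each run $\bone_{\{v(\,\cdot\,,0)\le b\}}\equiv1$ and — because $v(\,\cdot\,,0)>w(\,\cdot\,,0)$ — $\bone_{\{w(\,\cdot\,,0)\ge b\}}\equiv0$, so $y\mapsto\int_x^y(\bone_{\{v(\,\cdot\,,0)\le b\}}-\bone_{\{w(\,\cdot\,,0)\ge b\}})$ increases strictly while $y$ stays in the run; by the estimate behind \eqref{a17.1} this forces $x\mapsto\alpha(v,b,x)-x$ to be non-increasing on $[p_i,q_i]$. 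Therefore $\{x\in[p_i,q_i]:2t\le\alpha(v,b,x)-x\}$ is a subinterval containing $p_i$, nonempty exactly when $t\le\tau_i:=(\alpha(v,b,p_i)-p_i)/2$, so the $i$-th run contributes to $A(v,b,t)$ a single interval translating rightward at unit speed and present iff $t\le\tau_i$. Moreover the left endpoint of this interval moves right at speed precisely $1$ while its right endpoint moves right at speed at most $1$; hence the gap between the images of two distinct runs never falls below its original length $p_{i+1}-q_i>0$, so the images stay pairwise disjoint and never merge. Consequently $N_v(b,t)=\#\{i:t\le\tau_i\}$ is non-increasing in $t$, and the mirror argument handles $N_w$. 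Combining with the previous paragraph, $\bV(v(\,\cdot\,,t))=\int_\R\bigl(2N_v(b,t)-j_v(b)\bigr)\,db$ and $\bV(w(\,\cdot\,,t))=\int_\R\bigl(2N_w(b,t)-j_w(b)\bigr)\,db$ are non-increasing.

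The main obstacle is the last step: establishing that each sublevel ``run'' propagates as a single interval (via the monotonicity of $\alpha(v,b,\,\cdot\,)$ along a run) and that the gaps between runs cannot close. The bookkeeping relating the level-set cardinality to $2N_v(b,t)-j_v(b)$ uniformly in $t$ is also a little delicate, but it is made to work by the conservation of the boundary values $L^v_\pm$ along characteristics; the reduction to nice data and the semicontinuity argument are routine.
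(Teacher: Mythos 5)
Your proof is correct in substance but takes a genuinely different route from the paper's. The paper reduces to data satisfying Assumption \ref{m29.2} via uniform convergence and then argues \emph{locally}, triangle by triangle as in Lemma \ref{j6.5}: on each triangle the slice $x\mapsto v(x,t)$ takes only (some of) the values of the initial slice and in the same order, which bounds its variation, and one sums over a covering family of triangles and iterates in time. You argue \emph{globally} through the level-set machinery of Section \ref{sec:gen}: the Banach indicatrix identity $\bV(v(\,\cdot\,,t))=\int_\R\#\{x:v(x,t)=b\}\,db=\int_\R\bigl(2N_v(b,t)-j_v(b)\bigr)\,db$, combined with Lemma \ref{a22.6} and the monotonicity of $x\mapsto\alpha(v,b,x)-x$ along each run of $\{v(\,\cdot\,,0)\le b\}$, reduces everything to showing that the number of components of each sublevel set is non-increasing, which your gap estimate $p_{i+1}-r_i(t)\ge p_{i+1}-q_i>0$ delivers. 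This is longer but buys more: it quantifies exactly how variation is lost (each annihilated run removes two crossings of level $b$), and your reduction step is in fact more careful than the paper's one-line appeal to uniform convergence --- you supply both the lower semicontinuity of $\bV$ under pointwise limits and approximants whose initial variation does not exceed the original, neither of which the paper makes explicit.

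Two technical points should be tidied up, though neither affects the architecture. First, a piecewise linear interpolant may be constant on some grid cells and is constant outside $[-R,R]$; under the paper's definition of local extremum (Definition \ref{m26.3}) every point of a constant interval is a local extremum, so such data violate Assumption \ref{m29.2} (iv) as literally stated. You need either to perturb the grid values so that the interpolant is strictly monotone between breakpoints, or to treat the constant tails separately (by finite propagation speed the solution there is a pure translation of constants, contributing nothing to the variation at any time). Second, the restart step uses time-shift consistency of the solution map for general continuous data, while Step 6 of the proof of Lemma \ref{a22.6} establishes it only under Assumption \ref{m29.2}; add the remark that it passes to the limit by the local $1$-Lipschitz property of $\calT$ in Theorem \ref{j22.1} (ii).
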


\begin{proof}
Since  convergence in Theorem \ref{j22.1} is uniform, it will suffice to prove the claim for initial conditions satisfying Assumption \ref{m29.2}.
It will suffice to prove monotonicity of variation in any fixed triangle $U$ of the type considered in Lemma  \ref{j6.5}.
Recall that in every $U$, the function $v(\,\cdot\,,0)$ is increasing, or decreasing, or has only one local extremum.
In every case discussed in the proof of Lemma  \ref{j6.5} (b), for every fixed $t\geq 0$, the function $x\to v(x,t)$ for $x$ such that $(x,t)\in U$ can take only (some) values taken by $v(x,0)$, $(x,0)\in U$, and the values are taken in the same order. This easily implies that $t\to \bV(v(\,\cdot\,,t))$ is non-increasing. The argument for $w$ is analogous.
\end{proof}

\subsection{Conservation of energy and momentum}

We define frozen and liquid regions and the boundary between them as follows,
\begin{align*}
F(s,u)&=\{(x,t): s \leq t \leq u, x\in [a_1,a_2],
v(x,t)=w(x,t)\},\\
L(s,u)&=\{(x,t): s \leq t \leq u, x\in [a_1,a_2],
v(x,t)> w(x,t)\},\\
D(s,u)&= \prt F(s,u) \cap \prt L(s,u).
\end{align*}

\begin{lemma}\label{s5.2}

Suppose that the functions $ v(x,0)$ and $ w(x,0)$ satisfy Assumption \ref{m29.2} and, moreover, are continuous and piecewise linear with a finite number of intervals of linearity.Then the boundary  $D(0,t)$ consists of a finite number of line segments for every $t<\infty$. 

\end{lemma}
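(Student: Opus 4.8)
The plan is to reduce, via the construction of Section~\ref{sec:cons}, to a single ``construction triangle'' $U=U_{z,\eps}$ from Lemma~\ref{j6.5}, show that there the frozen/liquid interface is a finite union of line segments, and then propagate this through the finitely many stages used to build the solution on $[a_1,a_2]\times[0,t]$.

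First, the local statement. Fix $U=U_{z,\eps}$ as in Lemma~\ref{j6.5}, with $v(\,\cdot\,,0)$ and $w(\,\cdot\,,0)$ continuous and piecewise linear on its base, with finitely many linear pieces. I claim that $\calF$ and $\calT$ of Lemma~\ref{m22.1} are piecewise linear with finitely many segments. For $\calF=\{(x,t)\in\wh U:v(x-t,0)=w(x+t,0)\}$: if $(x,t),(x',t')\in\calF$ are such that $x-t$ and $x'-t'$ lie in a single linear piece of $v(\,\cdot\,,0)$, of slope $p$, and $x+t,\,x'+t'$ lie in a single linear piece of $w(\,\cdot\,,0)$, of slope $q$, then subtracting the two equations $v(x-t,0)-w(x+t,0)=0$ gives $p\big((x-x')-(t-t')\big)=q\big((x-x')+(t-t')\big)$, a linear relation between $x-x'$ and $t-t'$; hence $\calF$ is a line segment on each such ``cell'', and there are finitely many cells, so $\calF$ is a finite union of line segments (joined into one polygonal arc with slopes in $(-1,1)$ by \eqref{m22.5}). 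An essentially identical computation handles $\calT$, once one observes that the function $x\mapsto v(x,z-x)$ in \eqref{a28.1} is itself continuous and piecewise linear with finitely many pieces: in case~(iv) of Lemma~\ref{j6.5} it equals $x\mapsto v\big(x-h_\calF(x),0\big)$ on $[z-\eps/2,z]$, a composition of piecewise linear maps, and the other cases are similar or trivial. The remaining cases~(i)--(iii), (v), (vii)--(x) of Lemma~\ref{j6.5} are either pure liquid (no interface) or reduce to the same two computations. Along the way one reads off that, for each fixed $t$, $x\mapsto v(x,t)$ and $x\mapsto w(x,t)$ are piecewise linear with finitely many pieces, since values are transported along characteristics of slope $0$ or $\pm1$ that are themselves piecewise linear with finitely many pieces; in fact $v$ and $w$ are affine on each cell of a finite polygonal subdivision of $U$.

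Second, the global-in-time assembly. By compactness of $[a_1,a_2]$ and (the proof of) Lemma~\ref{j6.1} there is $\delta_1>0$ such that $[a_1,a_2]\times[0,\delta_1]$ is covered by finitely many triangles $U_k$; by the local statement the solution is affine on each cell of a finite polygonal subdivision of this slab, so $D(0,\delta_1)$ is a finite union of line segments. Moreover $v(\,\cdot\,,\delta_1),w(\,\cdot\,,\delta_1)$ are again continuous, piecewise linear with finitely many pieces, satisfy $v\ge w$, the boundary conditions \eqref{n10.3}, and (finitely many extrema: the count is non-increasing by Lemma~\ref{m26.1}(iv) and the analogous statement for the remaining extrema) Assumption~\ref{m29.2}. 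So the argument restarts at time $\delta_1$, producing $D(\delta_1,\delta_1+\delta_2)$, and so on, and $D(0,t)$ is the union over the stages of these finitely-many-segment pieces.

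The main obstacle is to rule out an accumulation of stages, i.e. to bound the number of stages needed to reach a given finite $t$. I would argue this from the fact that the total combinatorial complexity stays bounded: every breakpoint of $x\mapsto v(x,s)$ or $x\mapsto w(x,s)$ is carried along a characteristic of slope $0$ or $\pm1$ descending from one of the finitely many breakpoints of the initial data or bends of a freezing/thawing curve, and new breakpoints arise only at thawing points, each of which is fed by an existing interface bend, so no new ``sources'' are ever created; since the pieces of $v(\,\cdot\,,s),w(\,\cdot\,,s)$ are uniformly $\lambda$--Lipschitz (Lemma~\ref{m26.1}(ii)) and boundedly many, the values $\eps$ admissible in Lemma~\ref{j6.5}(a) are bounded below by a constant depending only on the initial data (away from the boundary cases, handled separately), so $\inf_k\delta_k>0$ and finitely many stages suffice. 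Finally, for $t>2(a_2-a_1)$ one invokes Proposition~\ref{a22.10}: there the solution is constant in time and $L$ is empty, so $D(0,t)=D(0,2(a_2-a_1))$, reducing to $t\le 2(a_2-a_1)$. Establishing the non-accumulation rigorously (equivalently, the uniform lower bound on the $\delta_k$) is the delicate point of the argument.
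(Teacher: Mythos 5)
Your local computation is sound: within a single construction triangle of Lemma \ref{j6.5}, the relations $v(x-t,0)=w(x+t,0)$ and $v(x,z-x)=w(x+t,0)$ restricted to a cell on which both initial functions are affine are linear in $(x,t)$, so $\calF$ and $\calT$ are finite unions of segments there. The genuine gap is exactly the one you flag at the end: the global assembly by time-stages is never closed. You need either a uniform lower bound $\inf_k\delta_k>0$ or a bound on the total number of segments produced, and neither is established. The admissible $\eps$ in Lemma \ref{j6.5}(a) at time $s$ is controlled by the spacing between local extrema of $v(\,\cdot\,,s)$, $w(\,\cdot\,,s)$ and by the geometry of the contact set $\{v(\,\cdot\,,s)=w(\,\cdot\,,s)\}$, and nothing you say prevents these spacings from shrinking from one stage to the next (e.g.\ a freezing corner drifting toward a thawing corner), so the claim that $\eps$ is ``bounded below by a constant depending only on the initial data'' does not follow from the $\lambda$-Lipschitz bound and a bounded piece count. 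The ``no new sources of breakpoints'' argument is likewise only heuristic; and if stages did accumulate, $D(0,t)$ could a priori be an infinite union of segments, which is precisely what the lemma denies. (The limit-at-the-supremum device used in the proof of Theorem \ref{m30.1} restores existence past an accumulation time but does not by itself bound the segment count below it.)

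The paper sidesteps the induction entirely by working with the explicit global formulas of Section \ref{sec:gen}: since the initial data are piecewise linear with finitely many pieces, each horizontal strip $\Gamma_k=\{(x,b):b_k\le b\le b_{k+1}\}$ is cut by the graphs of $v(\,\cdot\,,0)$ and $w(\,\cdot\,,0)$ into finitely many cells on which $(b,x)\mapsto\alpha(v,b,x)$ from \eqref{j17.3} is bilinear with polygonal cell boundaries; feeding this into \eqref{j18.5} and \eqref{j17.7a}, and invoking Lemma \ref{a22.6} to identify $v_*$ with $v$, one gets in one step a finite polygonal partition of $I\times[0,t]$ on which $v$ and $w$ are bilinear, hence $D(0,t)$ is a finite union of segments for every finite $t$. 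If you want to keep your characteristics-based route, the cleanest repair is to import that same level-set bookkeeping to bound the combinatorial complexity uniformly in time, rather than trying to bound the stage lengths from below.
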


\begin{proof}

Our assumptions imply that there is finite sequence $-\infty=b_1 < b_2 < \dots < b_n=\infty$ such that for every $k$, the inverse image of $[b_k,b_{k+1}]$ by $v(x,0)$ 
consists of a finite number of intervals, and $v(x,0)$ is linear on each of these intervals. Moreover,  the inverse image of $[b_k,b_{k+1}]$ by $w(x,0)$ 
consists of a finite number of intervals, and $w(x,0)$ is linear on each of these intervals.
This implies that $\Gamma_k:=\{(x,b): x\in \R, b_k \leq b \leq b_{k+1}\}$ is partitioned into a finite number of cells by the graphs of $v(x,0)$ and $w(x,0)$ for every $k$. Formula \ref{j17.3} then shows that
$\Gamma_k$ can be partitioned into a finite number of cells such that
$(b,x) \to \alpha(v,b,x)$ is bilinear on each cell and the boundaries of cells consist of a finite number of line segments.
The conditions in definition \eqref{j18.5} are 
\begin{align*} 
z=x+t, t\leq (\alpha(v,b,x)-x)/2 \text{  and  }( v(x,0)\leq b  \text{  or  }
x \leq a_1). 
\end{align*}
For any fixed $s>0$, the set $\Gamma_k \times [0,s]$ is partitioned into a finite number of cells such that for every cell, the conditions are satisfied for all points in the interior of the cell or for no points in the interior of the cell. Moreover, the boundaries of the cells are subsets of a finite number of 2-dimensional planes. This, Lemma \ref{a22.6}, \eqref{j18.5} and \eqref{j17.7a} imply that $I\times[0,t]$ can be partitioned into a finite number of cells such that
$(x,t) \to v(x,t)$ is bilinear on each cell and the boundaries of cells consist of a finite number of line segments. 
\end{proof}

\begin{proposition}\label{a5.1}
If $I=[a_1,a_2]$ and $\mu$ and $\sigma$ are solutions to 
\eqref{n10.1}-\eqref{n10.3} with continuous initial conditions  then
for all $t\geq 0$,
\begin{align}\label{a5.2}
\int_{a_1}^{a_2} \mu(x,t) dx &= \int_{a_1}^{a_2} \mu(x,0) dx,\\
\int_{a_1}^{a_2} \left(\mu(x,t)^2 + \sigma(x,t)^2\right) dx &=
\int_{a_1}^{a_2} \left(\mu(x,0)^2 + \sigma(x,0)^2\right) dx.
\label{a5.3}
\end{align}
\end{proposition}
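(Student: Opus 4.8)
\emph{Overview and reduction.} The plan is to pass to the variables $v,w$ of \eqref{a22.2}--\eqref{a22.3}, reduce to piecewise-linear initial data, and then differentiate the two integrals in $t$, observing that in both cases the integrand collapses to an exact $x$-derivative whose boundary terms vanish because $\sigma$ vanishes on the boundary of the liquid region. The hypotheses on $\mu,\sigma$ are equivalent to $(v(\cdot,0),w(\cdot,0))=\big(\tfrac12(\mu_0+\sigma_0),\tfrac12(\mu_0-\sigma_0)\big)\in\calC_1$, so by Theorem~\ref{j22.1} the solution map $\calC_1\to\calC_2$ is $1$-Lipschitz, hence continuous; since $[a_1,a_2]$ is compact, the functionals $(\mu,\sigma)\mapsto\int_{a_1}^{a_2}\mu(x,t)\,dx$ and $(\mu,\sigma)\mapsto\int_{a_1}^{a_2}(\mu^2+\sigma^2)(x,t)\,dx$ are continuous on $\calC_2$ for each fixed $t$. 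Thus it suffices to prove \eqref{a5.2}--\eqref{a5.3} for continuous, piecewise-linear $\mu_0,\sigma_0$ with finitely many linear pieces and $\sigma_0\ge0$, $\sigma_0(a_1)=\sigma_0(a_2)=0$; such pairs are dense in $\calC_1$ (piecewise-linear interpolants on grids containing $a_1,a_2$ preserve $\sigma_0\ge0$ and the endpoint conditions), the corresponding $(v_0,w_0)$ satisfy Assumption~\ref{m29.2}, and Lemma~\ref{s5.2} applies.

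\emph{Structure of the solution.} For such data, Lemma~\ref{s5.2} and its proof provide a partition of $[a_1,a_2]\times[0,T]$ into finitely many cells, with boundaries contained in a finite (hence Lebesgue-null) union $N$ of line segments, on each of which $v$ and $w$ — and hence $\mu=v+w$ and $\sigma=v-w$ — are bilinear; in particular $\mu$, $\sigma$, and $\mu\sigma$ are Lipschitz on $[a_1,a_2]\times[0,T]$. The interior of each cell lies entirely in the open liquid region $\{\sigma>0\}$ or in the frozen region $\{v=w\}$. On a liquid cell, Remark~\ref{m28.1}(i) together with Theorem~\ref{m30.1} forces $v$-characteristics to have slope $1$ and $w$-characteristics slope $-1$, which for bilinear $v,w$ is exactly the classical system $\mu_t=-\sigma_x$, $\sigma_t=-\mu_x$; on a frozen cell $\sigma\equiv0$ and $\mu$ does not depend on $t$, so $\mu_t=\sigma_t=0$. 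Hence \eqref{n10.1}--\eqref{n10.2} hold classically at every point of $([a_1,a_2]\times[0,T])\setminus N$.

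\emph{The computation.} Since $\mu,\sigma,\mu\sigma$ are Lipschitz, for all $x$ outside a finite set the line $\{x\}\times[0,T]$ meets $N$ in finitely many points, so $t\mapsto\mu(x,t)$ and $t\mapsto(\mu^2+\sigma^2)(x,t)$ are absolutely continuous and the fundamental theorem of calculus plus Fubini reduce \eqref{a5.2}--\eqref{a5.3} to showing, for a.e. $s$,
\begin{align*}
\int_{a_1}^{a_2}\mu_s(x,s)\,dx=0,\qquad \int_{a_1}^{a_2}2\big(\mu\mu_s+\sigma\sigma_s\big)(x,s)\,dx=0.
\end{align*}
Fix $s$ such that $[a_1,a_2]\times\{s\}$ meets $N$ in finitely many points. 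Then $O_s=\{x\in[a_1,a_2]:\sigma(x,s)>0\}$ is a finite disjoint union of intervals $(c_i,d_i)$, and $\sigma(c_i,s)=\sigma(d_i,s)=0$ for every $i$: at interior endpoints by continuity and $\sigma\ge0$, and at $a_1$ or $a_2$ by the boundary condition \eqref{n10.3}. Off $N$ one has $\mu_s=-\sigma_x\bone_{\{\sigma>0\}}$ and $2(\mu\mu_s+\sigma\sigma_s)=-2(\mu\sigma_x+\sigma\mu_x)\bone_{\{\sigma>0\}}=-2\,\partial_x(\mu\sigma)\,\bone_{\{\sigma>0\}}$, so, using absolute continuity of $\sigma(\cdot,s)$ and $(\mu\sigma)(\cdot,s)$ on each $[c_i,d_i]$,
\begin{align*}
\int_{a_1}^{a_2}\mu_s(x,s)\,dx&=-\sum_i\big(\sigma(d_i,s)-\sigma(c_i,s)\big)=0,\\
\int_{a_1}^{a_2}2\big(\mu\mu_s+\sigma\sigma_s\big)(x,s)\,dx&=-2\sum_i\big((\mu\sigma)(d_i,s)-(\mu\sigma)(c_i,s)\big)=0.
\end{align*}
This yields \eqref{a5.2}--\eqref{a5.3} for piecewise-linear data, and the reduction above completes the proof.

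\emph{Main obstacle.} The only nonroutine step is the regularity statement: one needs that for piecewise-linear initial data the solution is bilinear on finitely many cells, so that \eqref{n10.1}--\eqref{n10.2} hold pointwise a.e. and differentiation under the integral and integration by parts are legitimate; this is precisely what Lemma~\ref{s5.2} supplies. Everything else is measure-theoretic bookkeeping together with the key point that $\sigma=0$ on $\partial\{\sigma>0\}$ — by continuity in the interior of $[a_1,a_2]$ and by \eqref{n10.3} at the walls — which makes all boundary terms in the integrations by parts vanish.
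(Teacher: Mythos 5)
Your proof is correct and follows essentially the same route as the paper's: reduce to piecewise-linear initial data via the $1$-Lipschitz solution map of Theorem \ref{j22.1}, invoke the finite bilinear cell structure of Lemma \ref{s5.2}, and kill the boundary terms using $\sigma=0$ on the liquid/frozen interface and at $a_1,a_2$. The only difference is organizational — you swap the order of integration with Fubini and apply the fundamental theorem of calculus in $t$ for a.e.\ $x$, whereas the paper differentiates the spatial integral directly via the Leibniz rule with moving boundaries $\gamma_k(t)$ whose contributions telescope; both are legitimate.
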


\begin{proof}
First we will prove the proposition in the case when the initial conditions satisfy the assumptions of Lemma \ref{s5.2}. It follows from the lemma that there exists a finite or infinite sequence $0=s_1< s_2< \dots$ without an accumulation point such that within each region $[a_1,a_2] \times [s_j, s_{j+1}]$ the solutions  $\mu(x,t)$ and $\sigma(x,t)$  to 
\eqref{n10.1}-\eqref{n10.3} are bilinear functions  in the interiors of liquid and frozen regions. Moreover, the boundaries of liquid and frozen regions consist of a finite number of  line segments. 
Fix some $[a_1,a_2] \times [s_j, s_{j+1}]$ and
let $\{\gamma_k(t), s_j\leq t \leq s_{j+1}\}$, $k=0,\dots,n$ (where $n$ depends on $j$) be such that
$\gamma_0(t) =a_1$, $\gamma_n(t) =a_2$, $\gamma_k(t) \leq \gamma_{k+1}(t)$ for all $k$,  $\mu(x,t)$ and $\sigma(x,t)$  are frozen on the intervals $[\gamma_{k}(t), \gamma_{k+1}(t)]$ for $k\in 2 \Z$
and liquid in the interiors of the complementary intervals. 

In the following calculations we use the following observations:  $\sigma(\gamma_k(t),t)=0$ for all $k$ and $t$; $\gamma_0'(t) =\gamma'_n(t) =0$ for all $t$.

We have
\begin{align*}
\frac \prt {\prt t}
\int_{a_1}^{a_2} \mu(x,t) dx &
= \sum_{k=0}^{n-1} \frac \prt {\prt t}
\int_{\gamma_{k}(t)}^{\gamma_{k+1}(t)} \mu(x,t) dx \\
&=\sum_{k=0}^{n-1} \left(
\int_{\gamma_{k}(t)}^{\gamma_{k+1}(t)} \mu_t(x,t) dx 
+ \mu(\gamma_{k+1}(t),t)\gamma'_{k+1}(t)
- \mu(\gamma_{k}(t),t)\gamma'_{k}(t)
\right)\\
&=\sum_{k=0}^{n-1} 
\int_{\gamma_{k}(t)}^{\gamma_{k+1}(t)} \mu_t(x,t) dx \\
&=\sum_{k\in2 \Z} 
\int_{\gamma_{k}(t)}^{\gamma_{k+1}(t)} \mu_t(x,t) dx 
+ \sum_{k\notin2 \Z} 
\int_{\gamma_{k}(t)}^{\gamma_{k+1}(t)} \mu_t(x,t) dx \\
&=\sum_{k\in2 \Z} 
\int_{\gamma_{k}(t)}^{\gamma_{k+1}(t)} 0\, dx 
- \sum_{k\notin2 \Z} 
\int_{\gamma_{k}(t)}^{\gamma_{k+1}(t)} \sigma_x(x,t) dx \\
&= \sum_{k\notin2 \Z} (\sigma(\gamma_{k}(t),t) - 
\sigma(\gamma_{k+1}(t),t))=0.
\end{align*}
This shows that
\begin{align*}
\int_{a_1}^{a_2} \mu(x,t) dx &= \int_{a_1}^{a_2} \mu(x,s_j) dx,
\end{align*}
for every $j$ and $t\in[s_j,s_{j+1}]$, and, therefore, completes the proof of \eqref{a5.2} under the assumptions of Lemma \ref{s5.2}.

We have
\begin{align*}
\frac \prt {\prt t}
\int_{a_1}^{a_2}& \left(\mu(x,t)^2 + \sigma(x,t)^2\right) dx 
= \sum_{k=0}^{n-1} \frac \prt {\prt t}
\int_{\gamma_{k}(t)}^{\gamma_{k+1}(t)} \left(\mu(x,t)^2 + \sigma(x,t)^2\right) dx \\
&=\sum_{k=0}^{n-1} 
\int_{\gamma_{k}(t)}^{\gamma_{k+1}(t)} \left(2\mu(x,t)\mu_t(x,t) + 2\sigma(x,t)\sigma_t(x,t)\right) dx \\
&\qquad +\sum_{k=0}^{n-1}
\left(\mu(\gamma_{k+1}(t),t)^2 + \sigma(\gamma_{k+1}(t),t)^2\right)
\gamma'_{k+1}(t)\\
&\qquad -\sum_{k=0}^{n-1} \left(\mu(\gamma_{k}(t),t)^2 + \sigma(\gamma_{k}(t),t)^2\right)
\gamma'_{k}(t)\\
&=\sum_{k=0}^{n-1} 
\int_{\gamma_{k}(t)}^{\gamma_{k+1}(t)} \left(2\mu(x,t)\mu_t(x,t) + 2\sigma(x,t)\sigma_t(x,t)\right) dx \\
&=\sum_{k\in2 \Z} 
\int_{\gamma_{k}(t)}^{\gamma_{k+1}(t)} \left(2\mu(x,t)\mu_t(x,t) + 2\sigma(x,t)\sigma_t(x,t)\right) dx \\
&\qquad+ \sum_{k\notin2 \Z} 
\int_{\gamma_{k}(t)}^{\gamma_{k+1}(t)} \left(2\mu(x,t)\mu_t(x,t) + 2\sigma(x,t)\sigma_t(x,t)\right) dx \\
&=\sum_{k\in2 \Z} 
\int_{\gamma_{k}(t)}^{\gamma_{k+1}(t)} 0\, dx 
- \sum_{k\notin2 \Z} 
\int_{\gamma_{k}(t)}^{\gamma_{k+1}(t)} \left(2\mu(x,t)\sigma_x(x,t) + 2\sigma(x,t)\mu_x(x,t)\right) dx \\
&= 2\sum_{k\notin2 \Z}\left(
\mu(\gamma_{k}(t),t) \sigma(\gamma_{k}(t),t) - 
\mu (\gamma_{k+1}(t),t) \sigma(\gamma_{k+1}(t),t)\right)=0.
\end{align*}
This shows that 
\begin{align*}
\int_{a_1}^{a_2} \left(\mu(x,t)^2 + \sigma(x,t)^2\right) dx &=
\int_{a_1}^{a_2} \left(\mu(x,s_j)^2 + \sigma(x,s_j)^2\right) dx,
\end{align*}
for every $j$ and $t\in[s_j,s_{j+1}]$, and, therefore, completes the proof of \eqref{a5.3} under the assumptions of Lemma \ref{s5.2}

The general case follows from an approximation argument using
Theorem \ref{j22.1}.
\end{proof}

\begin{remark}
The integrals in \eqref{a5.2} and \eqref{a5.3} represent ``momentum''  and ``energy'' in the pinned balls model discussed in Section \ref{sec:pinn}. So \eqref{a5.2}-\eqref{a5.3} can be interpreted as ``conservation'' of momentum  and energy; this agrees well with the physical interpretation of the evolution of pinned balls' velocities.
\end{remark}

\subsection{Conservation of occupation measure}

The following definitions are inspired by the concept of ``local time'' well known in stochastic analysis, see, e.g.,
\cite[Sec. 3.6]{KS}.

For a function $f: I\to \R$, we define  occupation measure $\calD$ by
\begin{align}\label{a22.5}
\calD(f,B) = \int_I \bone_{\{f(x)\in B\}} dx,
\qquad B\subset \R.
\end{align}

If the following derivative exists then it will be called the level density,
\begin{align*}
\calL(f,x) = \frac d {dx} \calD(f,(-\infty, x]),
\qquad x \in \R.
\end{align*}

\begin{proposition}\label{a22.1}

(i)
For all $B\subset \R$ and $t\geq 0$,
\begin{align}\label{a22.4}
\calD(v(\,\cdot\,,t), B)
-\calD(w(\,\cdot\,,t), B)
= \calD(v(\,\cdot\,,0), B) - \calD(w(\,\cdot\,,0), B).
\end{align}

(ii) If the following level densities exist then for all $x\in \R$  and $t\geq 0$,
\begin{align*}
\calL(v(\,\cdot\,,t), x)
-\calL(w(\,\cdot\,,t), x)
= \calL(v(\,\cdot\,,0), x) - \calL(w(\,\cdot\,,0), x).
\end{align*}
\end{proposition}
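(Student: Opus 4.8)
The plan is to reduce everything, first to initial data satisfying Assumption \ref{m29.2} and then to a single generating family of sets $B$, and finally to exploit the explicit description of the evolution of sub/super‑level sets proved in Lemma \ref{a22.6}. For the reduction to nice data: by Theorem \ref{j22.1} the solution map is continuous (uniformly 1‑Lipschitz, globally when $I=[a_1,a_2]$ and on compacta when $I=\R$), so $v(\cdot,t)$ and $w(\cdot,t)$ are uniform limits of the solutions $v_k(\cdot,t),w_k(\cdot,t)$ started from data $(v_k(\cdot,0),w_k(\cdot,0))\in\calX$ converging to $(v(\cdot,0),w(\cdot,0))$; since by Lemma \ref{m26.1}(v) the profiles $v_k(\cdot,t),w_k(\cdot,t)$ are never constant on an interval, their level sets are Lebesgue‑null, and uniform convergence of the profiles is inherited by the occupation measures of the sub‑level sets, so it suffices to prove \eqref{a22.4} under Assumption \ref{m29.2}. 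For the reduction in $B$: both sides of \eqref{a22.4} are, for fixed $t$, signed measures in $B$; the half‑lines $(-\infty,b]$ form a $\pi$‑system generating the Borel $\sigma$‑algebra, so it is enough to treat $B=(-\infty,b]$ (when $I=\R$ one first intersects with $[-a,a]$ and lets $a\to\infty$, using the localization in the proof of Theorem \ref{m30.1}).

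The heart of the argument is the case $B=(-\infty,b]$ under Assumption \ref{m29.2}. Using Lemma \ref{a22.6} and \eqref{j17.5a}--\eqref{j18.6},
\begin{align*}
\calD(v(\cdot,t),(-\infty,b]) &= \Leb\{x:v(x,t)\le b\}=\Leb\bigl(A(v,b,t)\bigr)=\Leb\bigl\{x: v(x,0)\le b,\ \tfrac12(\alpha(v,b,x)-x)\ge t\bigr\},
\end{align*}
and, since $\{w(\cdot,t)\le b\}$ and $\{w(\cdot,t)\ge b\}$ partition $I$ up to the null set $\{w(\cdot,t)=b\}$, also
\begin{align*}
\calD(w(\cdot,t),(-\infty,b]) &= \Leb(I)-\Leb\bigl(A(w,b,t)\bigr)=\Leb(I)-\Leb\bigl\{y:w(y,0)\ge b,\ \tfrac12(y-\alpha(w,b,y))\ge t\bigr\}
\end{align*}
(for $I=\R$ one works instead with the integrand $\mathbf 1_{\{v(\cdot,t)\le b\}}-\mathbf 1_{\{w(\cdot,t)\le b\}}$, which is integrable). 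Thus it remains to show $t\mapsto \Leb(A(v,b,t))-\Leb(A(w,b,t))$ is constant. This is the annihilation picture of Figs. \ref{fig21}--\ref{fig22} made precise: by the remark after \eqref{j18.6}, $y=\alpha(v,b,x)\iff x=\alpha(w,b,y)$, so up to the (null) set of interval endpoints of $A(v,b,0)$ and $A(w,b,0)$ the map $x\mapsto\alpha(v,b,x)$ is a bijection between the ``matched'' parts $\{x:v(x,0)\le b,\ \alpha(v,b,x)<\infty\}$ and $\{y:w(y,0)\ge b,\ \alpha(w,b,y)>-\infty\}$; the balance $\Leb\bigl(A(v,b,0)\cap[x,\alpha(v,b,x)]\bigr)=\Leb\bigl(A(w,b,0)\cap[x,\alpha(v,b,x)]\bigr)$ (exactly the vanishing of the integral in \eqref{j17.1a} at $y=\alpha(v,b,x)$) makes this bijection measure preserving, and it preserves survival times, $\tfrac12(\alpha(v,b,x)-x)=\tfrac12(y-\alpha(w,b,y))$. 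Hence the push‑forwards of Lebesgue measure on the two matched sets under the survival‑time map agree on $(0,\infty)$, so $\Leb(A(v,b,t))-\Leb(A(w,b,t))$ equals (measure of the unmatched $v$‑set) $-$ (measure of the unmatched $w$‑set) for every $t>0$, which is $t$‑independent, and the same value is attained at $t=0$ because the matched pieces cancel there too. This proves (i).

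Part (ii) is an immediate consequence: for fixed $t$, taking $B=(-\infty,x]$ in \eqref{a22.4} gives $\calD(v(\cdot,t),(-\infty,x])-\calD(w(\cdot,t),(-\infty,x])=\calD(v(\cdot,0),(-\infty,x])-\calD(w(\cdot,0),(-\infty,x])$ for all $x$, and differentiating in $x$ at any point where all four level densities exist yields the asserted identity. The main obstacle is the step that replaces the heuristic ``the sub‑level set of $v$ moves right and the super‑level set of $w$ moves left at the same speed, annihilating at a common rate'' by the rigorous statement that $x\mapsto\alpha(v,b,x)$ is measure preserving and survival‑time equivariant: this requires careful bookkeeping at the measure‑zero set of endpoints of the intervals comprising $A(v,b,0)$ and $A(w,b,0)$, and in the case $I=\R$ the additional care of working with the integrand rather than with the (possibly infinite) quantity $\Leb(A(w,b,t))$.
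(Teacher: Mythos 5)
There is a genuine gap, and it is located at the pivotal step of your argument. From your own identities $\calD(v(\cdot,t),(-\infty,b])=\Leb(A(v,b,t))$ and $\calD(w(\cdot,t),(-\infty,b])=\Leb(I)-\Leb(A(w,b,t))$, the left-hand side of \eqref{a22.4} equals $\Leb(A(v,b,t))+\Leb(A(w,b,t))-\Leb(I)$, so what you must show constant in $t$ is the \emph{sum} $\Leb(A(v,b,t))+\Leb(A(w,b,t))$, not the difference. Your matching argument (the measure-preserving, survival-time-equivariant bijection $x\mapsto\alpha(v,b,x)$) is correct as far as it goes, but it proves constancy of the \emph{difference} $\Leb(A(v,b,t))-\Leb(A(w,b,t))$: since the two sets annihilate at equal rates, each loses the same mass, so their difference is conserved while their sum strictly decreases whenever annihilation occurs. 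Hence the final step verifies the wrong conserved quantity and the proof does not establish \eqref{a22.4}. What your argument does establish is that $\calD(v(\cdot,t),B)+\calD(w(\cdot,t),B)$ is conserved. In fact \eqref{a22.4} as printed fails for the data of Remark \ref{f23.3}: with $v(x,0)=x^2$, $w(x,0)=-(x-4)^2+3$ and $B=(c_1,c_2]$, $0<c_1<c_2<3/2$, one has $\calD(v(\cdot,0),B)-\calD(w(\cdot,0),B)=2\sqrt{c_2}-2\sqrt{c_1}-2\sqrt{3-c_1}+2\sqrt{3-c_2}$, while for large $t$ every sublevel set of $v$ below level $3/2$ has been entirely annihilated and the difference equals $2\sqrt{c_1}-2\sqrt{c_2}-2\sqrt{3-c_1}+2\sqrt{3-c_2}$; the two values differ by $4(\sqrt{c_1}-\sqrt{c_2})\neq 0$, whereas the corresponding sums agree.

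To be fair, the paper's own proof commits the analogous slip: it regroups $-\Leb(A(v,c_1,t))-\Leb(A(w,c_1,t))$ as $-[\Leb(A(w,c_1,t))-\Leb(A(v,c_1,t))]$ and then asserts that $t\mapsto\Leb(A(v,c_2,t))+\Leb(A(w,c_2,t))$ is constant, which contradicts the equal-rate annihilation it invokes. So the discrepancy traces back to the sign in the statement of the proposition as much as to your write-up. Your underlying mechanism --- the measure-preserving matching with equal survival times, rigorizing the ``annihilate at the same rate'' heuristic --- is sound and in fact more careful than the paper's sketch, and together with your reductions (to data in $\calX$ via Theorem \ref{j22.1} and to half-lines via a $\pi$-system argument) it yields a complete proof of the corrected identity, with $+$ in place of $-$ on both sides of \eqref{a22.4} and correspondingly in part (ii). As written, however, the reduction ``it remains to show $t\mapsto\Leb(A(v,b,t))-\Leb(A(w,b,t))$ is constant'' is incorrect, and this is not a cosmetic issue.
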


\begin{proof}
We will write $\Leb$ for Lebesgue measure.

Fist consider the case when $I=\R$.
Suppose that $\calD(v(\,\cdot\,,t),(b_1,b_2))< \infty$ and $\calD(w(\,\cdot\,,t),(b_1,b_2))< \infty$. Then almost all points  $y\in(b_1,b_2)$ are continuity points, i.e., $\calD(v(\,\cdot\,,t),\{y\})=\calD(w(\,\cdot\,,t),\{y\})=0$.
To show that \eqref{a22.4} holds for all   $B\subset(b_1,b_2)$,
it will suffice to prove \eqref{a22.4}
for $B=[c_1,c_2]$ where  $c_1,c_2\in(b_1,b_2)$ are continuity points.
It has been proved in Lemma \ref{a22.6} that $\calA(v,b,t)= A(v,b,t)$ and $\calA(w,b,t)= A(w,b,t)$.
By \eqref{j21.1}-\eqref{j21.2} and \eqref{a22.5},
\begin{align*}
\calD(v(\,\cdot\,,t), [c_1,c_2])
= \int_I \bone_{\{v(x,t)\in [c_1,c_2]\}} dx
= \Leb(A(v, c_2,t)) - \Leb(A(v, c_1,t)).
\end{align*}
Similarly,
\begin{align*}
\calD(w(\,\cdot\,,t), [c_1,c_2])
= \int_I \bone_{\{w(x,t)\in [c_1,c_2]\}} dx
= \Leb(A(w, c_1,t)) - \Leb(A(w, c_2,t)),
\end{align*}
so
\begin{align*}
&\calD(v(\,\cdot\,,t), [c_1,c_2])
-\calD(w(\,\cdot\,,t), [c_1,c_2])\\
&= \Leb(A(v, c_2,t)) - \Leb(A(v, c_1,t))
-\Leb(A(w, c_1,t)) + \Leb(A(w, c_2,t))\\
&= [\Leb(A(v, c_2,t)) + \Leb(A(w, c_2,t))]
-[\Leb(A(w, c_1,t)) - \Leb(A(v, c_1,t))].
\end{align*}
Recall from the proof of Lemma \ref{a22.6} that the sets 
$ A(v,b,t)$ and $ A(w,b,t)$ annihilate each other at the same rate
during the evolution in time---this
was stated informally in Step 1 and proved rigorously later in the proof.
Hence, $t\to \Leb(A(v, c_2,t)) + \Leb(A(w, c_2,t))$ is constant
and so is $t\to \Leb(A(w, c_1,t)) - \Leb(A(v, c_1,t))$.
It follows that $t\to \calD(v(\,\cdot\,,t), [c_1,c_2])
-\calD(w(\,\cdot\,,t), [c_1,c_2])$ is constant. 
This completes the proof of \eqref{a22.4}  when 
 $I=\R$, $\calD(v(\,\cdot\,,t),(b_1,b_2))< \infty$ and $\calD(w(\,\cdot\,,t),(b_1,b_2))< \infty$.
A similar argument applies when one of the sides of  \eqref{a22.4} is infinite.

Now assume that $I=[a_1,a_2]$. 
Fix $T>0$ and let
\begin{align*}
 A_T(v,b,t) &=  A(v,b,t) \cup [a_1 -T+ t, a_1 ] ,\\
 A_T(w,b,t) &=  A(w,b,t) \cup [a_2, a_2 +T- t ] .
\end{align*}
For $t\in[0,T]$,
\begin{align*}
\calD(v(\,\cdot\,,t), [c_1,c_2])
&= \int_I \bone_{\{v(x,t)\in [c_1,c_2]\}} dx
= \Leb(A(v, c_2,t)) - \Leb(A(v, c_1,t))\\
&= \Leb(A_T(v, c_2,t)) - \Leb(A_T(v, c_1,t))
\end{align*}
and 
\begin{align*}
\calD(w(\,\cdot\,,t), [c_1,c_2])
&= \Leb(A_T(w, c_2,t)) - \Leb(A_T(w, c_1,t)),
\end{align*}
so
\begin{align*}
&\calD(v(\,\cdot\,,t), [c_1,c_2])
-\calD(w(\,\cdot\,,t), [c_1,c_2])\\
&= [\Leb(A_T(v, c_2,t)) + \Leb(A_T(w, c_2,t))]
-[\Leb(A_T(w, c_1,t)) - \Leb(A_T(v, c_1,t))].
\end{align*}
It follows from cases (VI)-(VIII) in the proof of Lemma \ref{a22.6} that the sets 
$ A_T(v,b,t)$ and $ A_T(w,b,t)$ annihilate each other at the same rate
when $t\in[0,T]$.
Hence, $t\to \Leb(A_T(v, c_2,t)) + \Leb(A_T(w, c_2,t))$ is constant
and so is $t\to \Leb(A_T(w, c_1,t)) - \Leb(A_T(v, c_1,t))$.
It follows that $t\to \calD(v(\,\cdot\,,t), [c_1,c_2])
-\calD(w(\,\cdot\,,t), [c_1,c_2])$ is constant. 
This completes the proof of \eqref{a22.4}  when 
 $I=[a_1,a_2]$, $\calD(v(\,\cdot\,,t),(b_1,b_2))< \infty$ and $\calD(w(\,\cdot\,,t),(b_1,b_2))< \infty$.
A similar argument applies when one of the sides of  \eqref{a22.4} is infinite.

Part (ii) of the proposition follows from part (i).
\end{proof}

\begin{corollary}
Suppose that $a>0$, $I=[-a,a]$, $\sigma(x,0)=0$ for all $x$, and $\mu(x,0)$ is strictly decreasing. Recall from Proposition  \ref{a22.10} that $\sigma(x,t) = 0$ for all $x$ and $t\geq 4a$. We have $\mu(x,t) = \mu(-x,0)$ for
all $t\geq 4a$ and $x\in I$. 

\end{corollary}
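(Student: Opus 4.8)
The plan is to work with the explicit formulas for the solution and compute the relevant sublevel sets by hand. Since $\sigma(\,\cdot\,,0)\equiv 0$, the diagonalized initial data are $v(\,\cdot\,,0)=w(\,\cdot\,,0)=g$, where $g:=\tfrac12\mu(\,\cdot\,,0)$ is continuous and strictly decreasing on $I=[-a,a]$. For $t\ge 4a=2(a_2-a_1)$, Proposition~\ref{a22.10} gives $\sigma(\,\cdot\,,t)\equiv 0$, hence $v(\,\cdot\,,t)=w(\,\cdot\,,t)=\tfrac12\mu(\,\cdot\,,t)$, so it suffices to prove $v(x,t)=g(-x)$ for all $x\in I$ and $t\ge 4a$. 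Recall that $v=v_*$ and $w=w_*$ in the sense of \eqref{j17.7a}--\eqref{j17.8a}: this is Lemma~\ref{a22.6} when $\mu(\,\cdot\,,0)$ is Lipschitz, and in general the solution produced by Theorem~\ref{j22.1} coincides with the one defined by \eqref{j17.1a}--\eqref{j17.8a}, because the Lipschitz estimate in the proof of Theorem~\ref{j22.1} is obtained directly from those formulas.

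Fix $b$ in the open range $(g(a),g(-a))$ and set $\beta=\beta(b):=g^{-1}(b)\in(-a,a)$. Extend $v(\,\cdot\,,0)=w(\,\cdot\,,0)$ by $-\infty$ on $(-\infty,-a)$ and $+\infty$ on $(a,\infty)$ as in Remark~\ref{a24.1}(iii). Since $g$ is strictly decreasing, the integrand appearing in \eqref{j17.3} equals $+1$ on $(-\infty,-a)$, $-1$ on $(-a,\beta)$, $+1$ on $(\beta,a)$, and $-1$ on $(a,\infty)$, off the finite set $\{-a,\beta,a\}$. I claim that $\alpha(v,b,x)=-x-2\beta$ for every $x\le-3a$. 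This is a direct inspection of the piecewise linear function $y\mapsto\int_x^y(\,\cdots\,)\,dz$: it rises from $0$ with slope $1$ to the value $-a-x\ge2a$ at $y=-a$; it then falls with slope $-1$ over $(-a,\beta)$, whose length $\beta+a$ is less than $2a$, so it stays $\ge a-\beta>0$ throughout $[-a,\beta]$; it rises again over $(\beta,a)$; and finally it falls with slope $-1$ on $(a,\infty)$, first reaching $0$ at $y_0=-x-2\beta$, which exceeds $a$ because $-x\ge3a>a+2\beta$. (Equivalently: going right from $x$, the accumulated $v$-sublevel mass first balances the accumulated $w$-superlevel mass at $-x-2\beta$.)

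Now take $t\ge4a$ and $z\in I$, and set $x=z-t$, so $x\le a-4a=-3a\le a_1$. Hence the clause ``$x\le a_1$'' of \eqref{j18.5} is in force, and $z\in A(v,b,t)$ if and only if $t\le\tfrac12\bigl(\alpha(v,b,x)-x\bigr)=-x-\beta=t-z-\beta$, i.e. if and only if $z\le-\beta$. Therefore
\[
A(v,b,t)=[-a,-\beta(b)]\qquad\text{for all }t\ge4a\text{ and all }b\in(g(a),g(-a)).
\]
Since $b\mapsto A(v,b,t)$ is non-decreasing (Remark~\ref{a24.1}(ii)) and $\beta=g^{-1}$ is a strictly decreasing homeomorphism of $(g(a),g(-a))$ onto $(-a,a)$ with $\beta(g(-x))=-x$, a routine monotonicity argument applied to \eqref{j17.7a} gives $v_*(x,t)=g(-x)$ for every $x\in(-a,a)$; continuity of $v(\,\cdot\,,t)$ (Theorem~\ref{m28.4}) and of $g$ then extends the identity to $x=\pm a$. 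Consequently $\mu(x,t)=v(x,t)+w(x,t)=2v(x,t)=2g(-x)=\mu(-x,0)$ for all $t\ge4a$ and $x\in I$.

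The main obstacle is the evaluation of $\alpha(v,b,x)$ at the far-left base points $x=z-t$: one must track the piecewise linear running integral in \eqref{j17.3} carefully enough to see it does not vanish before $y=-x-2\beta$, and one must notice that at these base points it is the ``$x\le a_1$'' alternative in \eqref{j18.5} (rather than any condition on $v(x,0)$) that governs membership in $A(v,b,t)$ — reflecting the fact that for large $t$ the value reaching $z$ comes from the left-hand reservoir of $v$-mass. Both points reduce to the elementary bookkeeping above, using only $z-t\le-3a$.
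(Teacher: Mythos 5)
Your proof is correct, but it takes a genuinely different route from the paper's. The paper deduces the corollary in a few lines from Proposition \ref{a22.1}: applying \eqref{a22.4} with $B=(-\infty,b]$, and observing that the sum of the $v$- and $w$-occupation measures of the relevant half-lines equals $2a$ both at time $0$ and at any $t\geq 4a$ (because $v=w$ is strictly monotone at both times --- decreasing initially, and non-decreasing at time $t$ by Proposition \ref{a22.10}), it concludes that each occupation measure is separately conserved, and the reversal of monotonicity then forces $v(x,t)=v(-x,0)$. You instead bypass the conservation law entirely and compute $\alpha(v,b,x)$ and $A(v,b,t)$ explicitly from \eqref{j17.3} and \eqref{j18.5} for base points $x=z-t\leq-3a$, arriving at $A(v,b,t)=[-a,-g^{-1}(b)]$ with $g=\tfrac12\mu(\,\cdot\,,0)$; your bookkeeping of the piecewise linear running integral checks out, the observation that it is the clause $x\leq a_1$ in \eqref{j18.5} that governs membership is the right one, and your identification of the constructed solution with the formula-defined $v_*$ for merely continuous (not necessarily Lipschitz) data is justified by the way the $1$-Lipschitz estimate is obtained in the proof of Theorem \ref{j22.1}. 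What the paper's route buys is brevity and reuse of an already-established conservation principle; what yours buys is a self-contained explicit computation that needs Proposition \ref{a22.10} only for the fact that $v=w$ when $t\geq 4a$, and that re-derives, rather than assumes, the monotonicity of $x\mapsto v(x,t)$ at large times.
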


\begin{proof}
Suppose that $t\geq 4a$ and consider any $b\in\R$.
We apply \eqref{a22.4} to $B=(-\infty,b]$ to see that
\begin{align}\label{a22.11}
\calD(v(\,\cdot\,,t), (-\infty,b])
-\calD(w(\,\cdot\,,t), (-\infty,b])
= \calD(v(\,\cdot\,,0), (-\infty,b]) - \calD(w(\,\cdot\,,0), (-\infty,b]).
\end{align}
Since $\mu(x,0)=2v(x,0)=2w(x,0)$ is strictly decreasing and, by Lemma \ref{a22.10},  $\mu(x,t)=2v(x,t)=2w(x,t)$ is strictly increasing, 
\begin{align*}
2a&=\calD(v(\,\cdot\,,t), (-\infty,b])
+\calD(w(\,\cdot\,,t), (-\infty,b])\\
&= \calD(v(\,\cdot\,,0), (-\infty,b]) + \calD(w(\,\cdot\,,0), (-\infty,b]).
\end{align*}
This and \eqref{a22.11} imply that 
\begin{align}\label{a22.12}
\calD(v(\,\cdot\,,t), (-\infty,b])
&= \calD(v(\,\cdot\,,0), (-\infty,b]),\\
\calD(w(\,\cdot\,,t), (-\infty,b])
&=  \calD(w(\,\cdot\,,0), (-\infty,b]).\label{a22.13}
\end{align}
By Lemma \ref{a22.10}, $v(x,t)$ and $w(x,t)$ are strictly increasing
while $v(x,0)$ and $w(x,0)$ are strictly decreasing by assumption. This and \eqref{a22.12}-\eqref{a22.13} imply that $v(x,t) = v(-x,0)$ and $w(x,t) = w(-x,0)$ for $x\in I$. According to Lemma \ref{a22.10}, $\sigma(x,t)=0$ for $x\in I$. Hence $\mu(x,t) =2v(x,t) = 2v(-x,0)=\mu(-x,0)$.
\end{proof}

\subsection{Corners for freezing and thawing curves}

Recall the definitions of freezing and thawing boundaries stated in \eqref{m25.2} and \eqref{a28.1}.

\begin{proposition}\label{f20.1}

Assume  that the initial conditions satisfy  Assumption \ref{m29.2}.

(i)
Suppose that the right endpoints of a piece of the freezing boundary and a piece of the thawing boundary
meet  at a point $D$,
and characteristics  emanating from $v(x_1,0)$ and $w(y_1,0)$ also meet at $D$. Suppose that $v(x,0)$ is locally $C^1$ at $x_1$ and has a strictly positive derivative at $x_1$, while $w(y,0)$ is locally $C^2$ at $y_1$, has zero derivative at $y_1$, and its second derivative is strictly negative.
Then at $D$, the (one-sided) slope of the freezing boundary is 1 and the (one-sided) slope of the thawing boundary is $-3$ (see Fig. \ref{fig1}).

(ii)
Suppose that the left endpoints of a piece of the thawing boundary and and a piece of the freezing boundary
meet  at a point $C$,
and characteristics  emanating from $v(x_1,0)$ and $w(y_1,0)$ also meet at $C$. Suppose that $v(x,0)$ is locally $C^1$ at $x_1$ and has a strictly positive derivative at $x_1$, while $w(y,0)$ is locally $C^2$ at $y_1$, has zero derivative at $y_1$, and its second derivative is strictly negative.
Then at $C$, the (one-sided) slope of the thawing boundary is $-\infty$ and the (one-sided) slope of the thawing boundary is $0$ (see Fig. \ref{fig2}).

\end{proposition}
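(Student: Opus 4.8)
\emph{General strategy.} Both parts are local computations near the corner and use the same mechanism: parametrize the freezing boundary $\calF$ and the thawing boundary $\calT$ near the corner by the common value $b$ carried by the two characteristics that meet there, write the freezing and thawing points as explicit functions of $b$ via \eqref{m25.2} and \eqref{a28.1}, Taylor expand at $b=b_0$, and read off the one-sided tangent directions. Write $b_0=v(x_1,0)=w(y_1,0)$ for the common value. Since $v$ is constant on its characteristics and $w$ on its, and the characteristics carrying values near $b_0$ stay in the liquid region until they reach $\calF$ (resp.\ $\calT$) near the corner, the behaviour of $v(\cdot,0)$ near $x_1$ and of $w(\cdot,0)$ near $y_1$ propagates to the corner. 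For $b$ in a one-sided neighbourhood of $b_0$ let $x(b)$ be the solution of $v(x(b),0)=b$ near $x_1$; by the inverse function theorem $x(b)=x_1+(b-b_0)/v'(x_1)+o(b-b_0)$. Let $y^-(b)<y_1<y^+(b)$ be the two solutions of $w(y^\pm(b),0)=b$ near $y_1$; since $w$ is $C^2$ with $w'(y_1)=0$ and $w''(y_1)<0$, Taylor's theorem gives $y^\pm(b)=y_1\pm\kappa\,(b_0-b)^{1/2}+o((b_0-b)^{1/2})$ with $\kappa:=(-2/w''(y_1))^{1/2}$. Put $s:=b_0-b\downarrow 0$.

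\emph{Part (i).} Here $D=(x_D,t_D)$ with $x_D=(x_1+y_1)/2$, $t_D=(y_1-x_1)/2$, the meeting point of the slope-$1$ $v$-characteristic from $(x_1,0)$ and the slope-$(-1)$ $w$-characteristic from $(y_1,0)$. For $b<b_0$ the freezing point carrying value $b$ is the intersection of the slope-$1$ $v$-characteristic through $(x(b),0)$ and the slope-$(-1)$ $w$-characteristic through $(y^-(b),0)$ (the increasing branch of $w$ is the one that produces a freezing transition, by \eqref{m25.2}), namely
\begin{align*}
F(b)=\Big(\tfrac12\bigl(x(b)+y^-(b)\bigr),\ \tfrac12\bigl(y^-(b)-x(b)\bigr)\Big);
\end{align*}
the value $b$ then stays frozen on the vertical segment $\{x=x_\calF(b)\}$, $x_\calF(b):=\tfrac12(x(b)+y^-(b))$, until the slope-$(-1)$ $w$-characteristic through $(y^+(b),0)$ (the decreasing branch) reaches it, so the thawing point is $T(b)=\bigl(x_\calF(b),\ y^+(b)-x_\calF(b)\bigr)$. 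These are exactly \eqref{m25.2} and \eqref{a28.1} specialized to the present configuration, which is case (iv) of Lemma \ref{j6.5} with base time shifted to $t_D$; the location of $T(b)$ is also read off from the annihilation dynamics of $t\mapsto\calA(v,b,t)$ and $t\mapsto\calA(w,b,t)$ in Lemma \ref{a22.6} (the $v$-sublevel end is pinned at $x_\calF(b)$ exactly while the $w$-superlevel interval, of length $y^+(b)-y^-(b)$, is being consumed). Substituting the expansions,
\begin{align*}
F(b)-D&=\Bigl(-\tfrac{\kappa}{2}\sqrt s,\ -\tfrac{\kappa}{2}\sqrt s\Bigr)+O(s),\\
T(b)-D&=\Bigl(-\tfrac{\kappa}{2}\sqrt s,\ \tfrac{3\kappa}{2}\sqrt s\Bigr)+O(s),
\end{align*}
so as $s\downarrow 0$ the one-sided tangent direction of $\calF$ at $D$ is $(1,1)$, i.e.\ slope $1$, and that of $\calT$ at $D$ is $(-1,3)$, i.e.\ slope $-3$.

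\emph{Part (ii).} The same scheme applies at $C$; only the local configuration changes (see Fig.\ \ref{fig2}): now the $v$-characteristic carrying $b_0$ reaches $C$ only after a vertical (frozen) segment, and the freezing and thawing points carrying value $b$, written via \eqref{m25.2}--\eqref{a28.1} as explicit functions of $b$, have a spatial coordinate that varies to leading order linearly in $b-b_0$ (governed by $v'(x_1)$), while the temporal coordinate of the thawing point carries a $(b_0-b)^{1/2}$ term coming from the square-root branch $y^\pm(b)$ of the maximum at $y_1$ and that of the freezing point is of higher order. Hence $\calF-C$ has leading term proportional to $\bigl((b-b_0),\,o(b-b_0)\bigr)$ and $\calT-C$ has leading term with vanishing spatial part to first order, so the one-sided tangent of $\calF$ at $C$ is horizontal (slope $0$) and that of $\calT$ at $C$ is vertical, with the thawing boundary being of the type whose slopes lie in $(-\infty,-1)$, i.e.\ recorded as slope $-\infty$. (The second displayed conclusion in the statement, ``$\cdots$ the (one-sided) slope of the thawing boundary is $0$'', should read ``freezing boundary''.)

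\emph{Main obstacle.} The Taylor expansion and the arithmetic are routine; the real content is the identification of $\calF$ and $\calT$ near the corner with the explicit one-parameter families $F(b)$ and $T(b)$ for $b$ in a one-sided neighbourhood of $b_0$ --- that is, knowing precisely which monotone branch of $v$ and which monotone branch of $w$ feed each curve, and that in a small enough neighbourhood no other part of the solution interferes. This rests on Lemma \ref{m22.1} together with the case analysis of Lemma \ref{j6.5} (case (iv) for part (i), and the analogous ``frozen-detour'' case for part (ii)) and, for the thawing point, on the annihilation picture of Lemma \ref{a22.6}; the one genuinely delicate point is the ``four characteristics meet at a point'' transition exactly at the corner, where the freezing curve (value $b$ increasing to $b_0$) and the thawing curve (value $b$ decreasing from $b_0$) both abut the two characteristics carrying $b_0$.
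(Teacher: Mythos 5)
Your part (i) is correct and is essentially the paper's own argument made quantitative: the paper argues informally with similar triangles that $|KM|\approx|JK|$ (your symmetric expansion $y^\pm(b)=y_1\pm\kappa\sqrt{b_0-b}+o(\sqrt{b_0-b})$) and that the $v$-level point moves negligibly in comparison (your $x(b)-x_1=O(b_0-b)$), then reads off the slopes $1$ and $-3$ from the resulting $3{:}1$ ratio, which is exactly your directions $(1,1)$ and $(-1,3)$. Your identification of $F(b)$ with the increasing branch of $w$ and of $T(b)$ with the decreasing branch, via Lemma \ref{m22.1} and case (iv) of Lemma \ref{j6.5}, is the right bookkeeping, and the arithmetic checks out.

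Part (ii) is where your sketch has a genuine gap. You never actually set up the local configuration at $C$, and the one you implicitly use (values $b<b_0$, ``the square-root branch $y^\pm(b)$ of the maximum at $y_1$'') cannot produce a corner at which both boundary pieces have their \emph{left} endpoints: pairing the increasing branch of $v$ at $x(b)<x_1$ with the two branches of $w$ around a local maximum places both the freezing point $\bigl(\tfrac12(x(b)+y^-(b)),\cdot\bigr)$ and the thawing point (same abscissa) strictly to the left of the meeting point of the two characteristics, i.e.\ it reproduces the part (i) picture. The thaw/freeze corner of Fig.~\ref{fig2} --- realized by the worked example opening Section~\ref{sec:exam}, with $w(y,0)=y^2$ near $y_1=0$ --- requires a local \emph{minimum} of $w$ at $y_1$, values $b>b_0$, and a $v$-characteristic that reaches $C$ only after a frozen vertical segment; the hypothesis ``second derivative strictly negative'' appears to be a sign slip of the same kind as the ``thawing/freezing'' slip you correctly flagged in the conclusion. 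Moreover, your asserted expansion for the freezing point is wrong even granting a correct configuration: in the thaw/freeze geometry one has $T(b)=\bigl(x(b),\,y^-(b)-x(b)\bigr)$ and $F(b)=\bigl(x(b)+\tfrac12(y^+(b)-y^-(b)),\,\tfrac12(y^+(b)+y^-(b))-x(b)\bigr)$, so the spatial coordinate of $F(b)$ is $x_1+\kappa\sqrt{|b-b_0|}\,(1+o(1))$, not linear in $b-b_0$; only the thawing point's spatial coordinate is linear via $v'(x_1)$. The conclusions (slope $-\infty$ for $\calT$, slope $0$ for $\calF$) do survive, since the time increment along $\calF$ is $o(\sqrt{|b-b_0|})$ while its space increment is of exact order $\sqrt{|b-b_0|}$, but that has to be extracted from the correct formulas, which your sketch does not supply. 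The paper itself only says part (ii) ``is based on similar ideas,'' so you are in good company, but as written your part (ii) is not a proof.
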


\begin{center}
\begin{figure}
\includegraphics[width=0.8\linewidth]{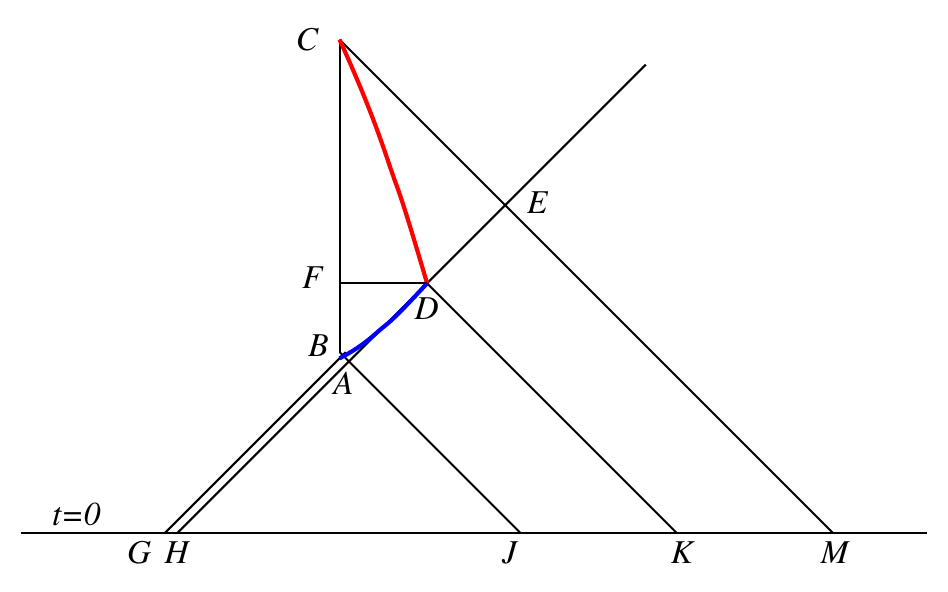}
\caption{A piece of the freezing boundary $BD$ (blue) and a piece of the thawing boundary $DC$ (red) meet at the point $D$.}
\label{fig1}
\end{figure}
\end{center}

\begin{center}
\begin{figure}
\includegraphics[width=0.8\linewidth]{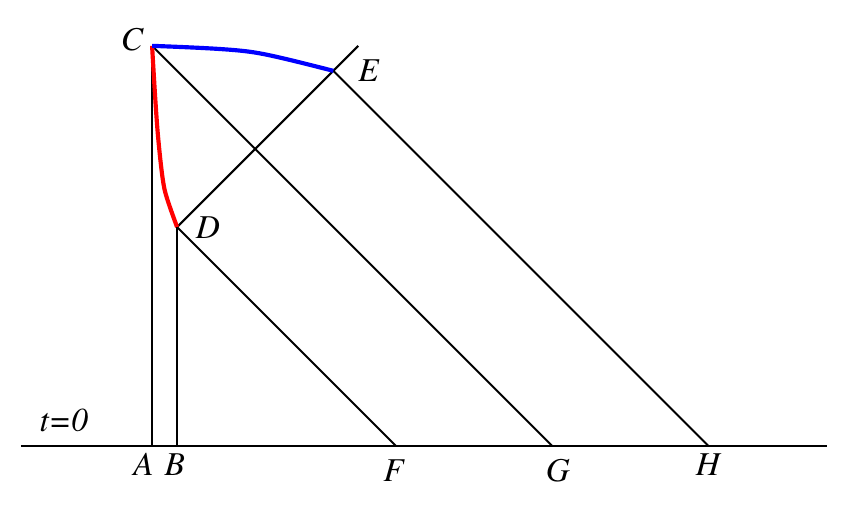}
\caption{A piece of the freezing boundary $CE$ (blue) and a piece of the thawing boundary $DC$ (red) meet at the point $C$.}
\label{fig2}
\end{figure}
\end{center}

\begin{proof}
We will sketch the proof in an informal way.

(i)
Please refer to Fig. \ref{fig1} for notation. Note that $H=(x_1,0)$ and $K=(y_1,0)$.
Suppose that 
$v(H)=v(A)=v(D)=v(E) = w(K)$, and $v(G) = v(B)=v(C) = w(J)=w(M)$.
If $|GH|$ is very small then $|GH|/|JK|$ is very small 
because the derivative of $v(x,0)$ is strictly positive at $x_1$ and 
the derivative of $w(y,0)$ is zero at $y_1$.
Therefore, the characteristic from $H$ to $D$ is tangent to the freezing boundary $BD$ at the point $D$. Hence, the slope of the freezing boundary at $D$ is 1.

Suppose that $|JK|$ is very small. Since $w(x,0)$ is $C^2$, we have $|KM|\approx |JK|$. This implies that $|AD|\approx|DE|$. The triangle $ACE$ is almost identical to the triangle $BCE$, and the triangles are almost isosceles  triangles with the right angle at $E$. Let $a=|AE|$. We have the following. $|AD| \approx a/2$. $|FD|\approx |BF| \approx \sqrt{2}a/4$. $|CB| \approx \sqrt{2} a$. $|CF| = |CB|-|BF| \approx 3\sqrt{2} a/4$.  Since $|CF|/|FD| \approx  
(3\sqrt{2} a/4)/(\sqrt{2} a/4)=3$, it follows that the slope of the thawing boundary at $D$ is $-3$.

The proof of (ii) is based on similar ideas. 
\end{proof}

\subsection{Monotone dependence on initial conditions}

\begin{proposition}\label{sec:monotonedependance}
  Suppose that $ v_{1}(x,0) \le v_{2}(x,0) $ and $ w_{1}(x,0) \le w_{2}(x,0)$
  for all $x\in I$. Then $ v_{1}(x,t) \le v_{2}(x,t) $ and $ w_{1}(x,t) \le w_{2}(x,t)$
  for all $x\in I$ and $t\geq 0$.
\end{proposition}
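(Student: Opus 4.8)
The plan is to read the monotonicity off directly from the explicit representation \eqref{j17.1a}--\eqref{j17.8a} of the solutions. By Lemma~\ref{a22.6} the solution with initial data satisfying Assumption~\ref{m29.2} equals the pair $(v_*,w_*)$ produced by those formulas, and by Theorem~\ref{j22.1} the solution operator on all continuous initial data is obtained by continuous extension of this map; since the $1$-Lipschitz estimate carried out in the proof of Theorem~\ref{j22.1} uses only the integral comparisons in \eqref{j17.1a}--\eqref{j17.2a} and applies to arbitrary continuous initial data, that extension coincides on $\calC_1$ with the pair $(v_*,w_*)$ defined directly by \eqref{j17.1a}--\eqref{j17.8a}. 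Hence it suffices to prove: if $v_1(\cdot,0)\le v_2(\cdot,0)$ and $w_1(\cdot,0)\le w_2(\cdot,0)$ pointwise on $I$, then $v_{1*}\le v_{2*}$ and $w_{1*}\le w_{2*}$ pointwise on $I\times[0,\infty)$.

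First I would record the pointwise domination of the integrands appearing in \eqref{j17.1a}--\eqref{j17.2a}. Because $v_1(z,0)\le v_2(z,0)$ we have $\bone_{\{v_1(z,0)\le b\}}\ge\bone_{\{v_2(z,0)\le b\}}$, and because $w_1(z,0)\le w_2(z,0)$ we have $\bone_{\{w_1(z,0)\ge b\}}\le\bone_{\{w_2(z,0)\ge b\}}$; adding, for every fixed $b$ the integrand built from $(v_1,w_1)$ dominates the one built from $(v_2,w_2)$ at every $z$. (When $I=[a_1,a_2]$ the extra indicators $\bone_{\{z\le a_1\}}$ and $\bone_{\{z\ge a_2\}}$ in \eqref{j17.3}--\eqref{j17.4} are common to both pairs, so the same domination holds; alternatively one reduces to the case $I=\R$ via Remark~\ref{a24.1}(iii).) Integrating, $\int_x^y$ of the $(v_1,w_1)$ integrand is $\ge$ the $(v_2,w_2)$ integral when $y\ge x$ and $\le$ it when $y\le x$. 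Comparing the defining infimum in \eqref{j17.1a} and the supremum in \eqref{j17.2a}, this yields $\alpha(v_1,b,x)\ge\alpha(v_2,b,x)$ and $\alpha(w_1,b,x)\ge\alpha(w_2,b,x)$ for all $b$ and $x$.

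Next I would convert these into inclusions of the sets $A$. If $z\in A(v_2,b,t)$, i.e.\ $z=x+t$ with $v_2(x,0)\le b$ and $t\le(\alpha(v_2,b,x)-x)/2$, then $v_1(x,0)\le v_2(x,0)\le b$ and $t\le(\alpha(v_2,b,x)-x)/2\le(\alpha(v_1,b,x)-x)/2$, so $z\in A(v_1,b,t)$; hence $A(v_2,b,t)\subseteq A(v_1,b,t)$ for every $b$ and $t$. Symmetrically, if $z\in A(w_1,b,t)$, i.e.\ $z=x-t$ with $w_1(x,0)\ge b$ and $t\le(x-\alpha(w_1,b,x))/2$, then $w_2(x,0)\ge w_1(x,0)\ge b$ and, using $\alpha(w_2,b,x)\le\alpha(w_1,b,x)$, $t\le(x-\alpha(w_1,b,x))/2\le(x-\alpha(w_2,b,x))/2$, so $z\in A(w_2,b,t)$; hence $A(w_1,b,t)\subseteq A(w_2,b,t)$.

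Finally I would feed the inclusions into \eqref{j17.7a}--\eqref{j17.8a}: from $A(v_2,b,t)\subseteq A(v_1,b,t)$ we get $\{b:x\in A(v_2,b,t)\}\subseteq\{b:x\in A(v_1,b,t)\}$, hence $v_{1*}(x,t)=\inf\{b:x\in A(v_1,b,t)\}\le\inf\{b:x\in A(v_2,b,t)\}=v_{2*}(x,t)$; from $A(w_1,b,t)\subseteq A(w_2,b,t)$ we get $\{b:x\in A(w_1,b,t)\}\subseteq\{b:x\in A(w_2,b,t)\}$, hence $w_{1*}(x,t)=\sup\{b:x\in A(w_1,b,t)\}\le\sup\{b:x\in A(w_2,b,t)\}=w_{2*}(x,t)$, which is the proposition. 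There is no genuine analytic obstacle here; the points requiring care are purely bookkeeping — tracking the directions of all the inequalities as they pass through the infima and suprema in \eqref{j17.1a}, \eqref{j17.2a}, \eqref{j17.7a}, \eqref{j17.8a}, and reconciling the density-extended solution operator of Theorem~\ref{j22.1} with the direct formulas so that the argument really does cover all continuous initial data.
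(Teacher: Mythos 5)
Your proposal is correct and follows essentially the same route as the paper: derive $\alpha(v_1,b,x)\ge\alpha(v_2,b,x)$ and $\alpha(w_1,b,x)\ge\alpha(w_2,b,x)$ from the pointwise domination of the indicator integrands in \eqref{j17.1a}--\eqref{j17.4}, convert these to the inclusions $A(v_2,b,t)\subseteq A(v_1,b,t)$ and $A(w_1,b,t)\subseteq A(w_2,b,t)$, and read off the conclusion from the infimum and supremum in \eqref{j17.7a}--\eqref{j17.8a} together with Lemma \ref{a22.6}. You fill in the inequality-chasing that the paper leaves implicit, and your remark reconciling the density-extended operator of Theorem \ref{j22.1} with the direct formulas is a correct and slightly more careful treatment of the general continuous case.
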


\begin{proof}
It follows from \eqref{j17.1a} and \eqref{j17.3} that
$\alpha(v_1,b,x) \geq \alpha(v_2,b,x)$ for all $b$ and $x$. This, \eqref{j17.5a} and \eqref{j18.5} imply that $A(v_2, b,t)\subset   A(v_1, b,t)$ for all $b$ and $t$. Hence, \eqref{j17.7a} and Lemma \ref{a22.6} show that $ v_{1}(x,t) \le v_{2}(x,t) $ for all $x\in I$ and $t\geq 0$.
The proof for $w$ is analogous.
\end{proof}

\subsection{Terminal conditions}

The following theorem is closely related to the results  in \cite{KBAO}. It presents a construction of $v$ and $w$ with a given freezing line, one liquid zone and one  frozen zone.

\begin{proposition}
Suppose that  $T: \R\to  [0,\infty)$ satisfies $|T(x)-T(y)|< |x-y|$ for all $x$ and $y$ and $f:\R \to \R$ is continuous and strictly increasing. Then there exist unique initial conditions such that the solution to \eqref{n10.1}-\eqref{n10.3} satisfies $\mu(x,T(x)+t)=\mu(x,T(x)) = f(x)$ and $\sigma(x,T(x)+t) =\sigma(x,T(x)) = 0$ for $x\in \R $ and $t\geq 0$.
\end{proposition}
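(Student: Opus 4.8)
The plan is to pass to the diagonal variables $v=\tfrac12(\mu+\sigma)$ and $w=\tfrac12(\mu-\sigma)$ of \eqref{a22.2}--\eqref{a22.3}. In these variables the requirement is that the solution of \eqref{n10.4}--\eqref{a5.4} have the graph of $T$ (read as a function from space to time) as its freezing curve, one liquid zone $\{t<T(x)\}$ below it and one frozen zone $\{t\ge T(x)\}$ above it, with $v=w=\tfrac12 f(x)$ (hence $\mu=f$, $\sigma=0$) throughout the frozen zone; since frozen values are constant in time this is exactly the stated condition. So the proposition reduces to producing initial data $(v(\cdot,0),w(\cdot,0))$ whose associated solution (Theorem \ref{j22.1}, formulas \eqref{j17.1a}--\eqref{j17.8a}) has this structure, and to showing such data is unique.

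I would read the data off the characteristics. By Remark \ref{m28.1}(i), in the liquid zone $v$ is constant along slope-$1$ lines and $w$ along slope-$(-1)$ lines, so the $v$-characteristic through a freezing point $(\xi,T(\xi))$ emanates from $(\xi-T(\xi),0)$ and the $w$-characteristic from $(\xi+T(\xi),0)$; continuity of the solution forces
\begin{align*}
  v\big(\xi-T(\xi),0\big)=w\big(\xi+T(\xi),0\big)=\tfrac12 f(\xi),\qquad \xi\in\R .
\end{align*}
The maps $\phi(\xi)=\xi-T(\xi)$ and $\psi(\xi)=\xi+T(\xi)$ are continuous and, by $|T(x)-T(y)|<|x-y|$, strictly increasing, hence homeomorphisms of $\R$ onto open intervals; since $T\ge 0$ one has $\phi\le\psi$, $\phi(\R)=(-\infty,P)$ and $\psi(\R)=(Q,\infty)$ with $P\in(-\infty,\infty]$, $Q\in[-\infty,\infty)$. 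Thus the relations above define $v(x,0)=\tfrac12 f(\phi^{-1}(x))$ for $x<P$ and $w(x,0)=\tfrac12 f(\psi^{-1}(x))$ for $x>Q$, each strictly increasing and continuous; on the overlap $\phi(\R)\cap\psi(\R)$ the two are consistent with the constraint because $\phi^{-1}(x)\ge x\ge\psi^{-1}(x)$ and $f$ is increasing, with equality exactly where $T=0$. When these two intervals already cover $\R$ (e.g.\ whenever $T$ is bounded, since then $P=+\infty$, $Q=-\infty$) this defines $(v(\cdot,0),w(\cdot,0))$ completely; otherwise the data must still be pinned down on $\R\setminus(\phi(\R)\cup\psi(\R))$, which I address below.

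For existence I would feed $(v(\cdot,0),w(\cdot,0))$ into Theorem \ref{j22.1} and identify its freezing curve: in each triangle $\wh U$ the freezing set is $\{(x,t):v(x-t,0)=w(x+t,0)\}$ by \eqref{m25.2} (or its time-shifted analogue), i.e.\ $\{(x,t):f(\phi^{-1}(x-t))=f(\psi^{-1}(x+t))\}$, which by injectivity of $f$ equals $\{(x,t):x-t=\xi-T(\xi),\ x+t=\xi+T(\xi)\ \text{for some }\xi\}$, precisely the graph of $T$. By Lemma \ref{m22.1}(i) this $\calF$ is connected and splits each triangle into a liquid piece below and a frozen piece above, so globally the liquid zone is $\{t<T(x)\}$ and the frozen zone is $\{t\ge T(x)\}$, with $v=w$ constant in time there. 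Evaluating on $\calF$, $v(\xi,T(\xi))=v(\phi(\xi),0)=\tfrac12 f(\xi)$ and likewise for $w$, so $\mu(\xi,T(\xi))=f(\xi)$, $\sigma(\xi,T(\xi))=0$, and freezing gives $\mu(\xi,T(\xi)+t)=\mu(\xi,T(\xi))=f(\xi)$, $\sigma(\xi,T(\xi)+t)=0$ for all $t\ge0$; transcribing back via \eqref{a22.2}--\eqref{a22.3} yields existence.

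For uniqueness, let $(\tilde\mu,\tilde\sigma)$ be any solution with the stated property, with diagonal variables $\tilde v,\tilde w$. Since $\tilde\sigma\equiv0$ and $\tilde\mu=f$ is strictly increasing on and above the graph of $T$, no thawing occurs, so the frozen region is exactly $\{t\ge T(x)\}$ and $\{t<T(x)\}$ is a single liquid zone; hence $\tilde v$ is transported along slope-$1$ characteristics there and can freeze only on $\calF=$ graph of $T$, forcing $\tilde v(\phi(\xi),0)=\tfrac12 f(\xi)$ for all $\xi$, and similarly $\tilde w(\psi(\xi),0)=\tfrac12 f(\xi)$; thus $\tilde v(\cdot,0)=v(\cdot,0)$ on $\phi(\R)$, $\tilde w(\cdot,0)=w(\cdot,0)$ on $\psi(\R)$, and so $\tilde\mu(\cdot,0)=\mu(\cdot,0)$, $\tilde\sigma(\cdot,0)=\sigma(\cdot,0)$ there. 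The main obstacle is the complementary set $\R\setminus(\phi(\R)\cup\psi(\R))$, i.e.\ the points whose two characteristics never reach $\calF$ (nonempty precisely when $T$ has slope tending to $\pm1$ at both ends): there the data is not seen directly by $\calF$ and must be pinned down by continuity together with the requirement that no \emph{additional} freezing curve appear—a characteristic carrying a value strictly below $\tfrac12\sup f$ (resp.\ above $\tfrac12\inf f$) through that region would otherwise meet an equal-valued opposite characteristic strictly below the graph of $T$, creating a second frozen zone. Making this forcing rigorous, and identifying the unique (possibly infinite) extremal value the data must take there, is where the monotone dependence of solutions on initial data (Proposition \ref{sec:monotonedependance}) and the $1$-Lipschitz estimate of Theorem \ref{j22.1} are the natural tools.
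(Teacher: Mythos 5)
Your construction is essentially the paper's proof: the paper also obtains the initial data by tracing characteristics backward through the prescribed freezing curve, setting $v(x,0)=f(y)$ with $y=\inf\{z\ge x:\ T(z)=z-x\}$ and $w(x,0)=f(r)$ with $r=\sup\{z\le x:\ T(z)=x-z\}$, which are precisely your $\phi^{-1}(x)$ and $\psi^{-1}(x)$ (the published formula even drops the factor $\tfrac12$ that you correctly keep). The paper's entire argument is the one-line assertion that these formulas are equivalent to \eqref{j23.1}--\eqref{j23.2}; your verification that the resulting solution's freezing set is the graph of $T$, and your uniqueness argument on $\phi(\R)\cup\psi(\R)$, are more detailed than what is printed.

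The issue you flag on $\R\setminus(\phi(\R)\cup\psi(\R))$ is a genuine gap, but it is a gap in the paper's proof as well, not a defect of your approach relative to it. It is not merely a technicality: take $T(\xi)=\sqrt{1+\xi^2}+C$ with $C>0$, so that $P=\sup_\xi(\xi-T(\xi))=-C$ and $Q=\inf_\xi(\xi+T(\xi))=C$. Every forward $v$-characteristic launched from $(x,0)$ with $x\ge P$ satisfies $T(x+t)>t$ for all $t$, hence stays strictly below the graph of $T$ forever and never reaches the freezing curve; the constraint \eqref{n10.8} along it only forces $v(x,0)\ge w(x+2t,0)$ for all $t$, i.e.\ $v(x,0)\ge\tfrac12\sup f$. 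If $\sup f<\infty$, both $v(x,0)\equiv\tfrac12\sup f$ and $v(x,0)=\tfrac12\sup f+(x-P)$ on $[P,\infty)$ (with the symmetric choices for $w$ on $(-\infty,Q]$) yield continuous solutions that are liquid below the graph of $T$ and frozen with value $f$ above it, so uniqueness as literally stated fails; if $\sup f=+\infty$, no admissible real-valued data exists at all. So the proposition implicitly needs $\xi\mp T(\xi)\to\pm\infty$ (equivalently, every backward characteristic from the liquid zone meets the graph of $T$), under which hypothesis your argument closes completely. You should not treat the unresolved tail of your write-up as a failure to reproduce the paper's reasoning --- the paper does not supply that step either.
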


\begin{proof}
In terms of $v$ and $w$, we are looking for solutions such that for all $x$ and $t\geq 0$,
\begin{align}\label{j23.1}
 v(x,T(x)+t)+w(x,T(x)+t)=v(x,T(x))+w(x,T(x)) &= f(x),\\
 v(x,T(x)+t)-w(x,T(x)+t)=v(x,T(x))-w(x,T(x)) &= 0.\label{j23.2}
\end{align}
Tracing characteristics back in time we see that \eqref{j23.1}-\eqref{j23.2} are satisfied if and only if for all $x\in\R$,
\begin{align*}
v(x,0) &= f(y) \text{  where  }
y = \inf\{z\geq x: T(z) = z-x\},\\
w(x,0) &= f(r) \text{  where  }
r = \sup\{z\leq x: T(z) = x-z\}.
\end{align*}
Then we let $\mu(x,0) = v(x,0)+w(x,0) $ and $\sigma(x,0) = v(x,0)-w(x,0) $
for all $x\in \R$.
\end{proof}

\subsection{Time reversibility}

\begin{remark}\label{a28.2}
  
  Consider $I=\R$ or $I=[-a,a]$.  Suppose that $(v,w)$
  satisfy \eqref{n10.4}-\eqref{a5.4}.  Fix some $T>0$.
  Let $\wt v(x,t) = v(-x,T-t)$ and
  $\wt w(x,t) = w(-x,T-t)$ for all $x\in I $ and
  $t\in[0,T]$.  Then $(\wt v, \wt w)$ also satisfy
  \eqref{n10.4}-\eqref{n10.8} in the region
  $\{(x,t): x\in I, t\in[0,T]\}$.
  However, frozen regions
  for $(\wt v, \wt w)$, if there are any, ``point in the
  wrong direction.'' For example, the triangles in Fig.
  \ref{fig14} and the Christmas tree in Fig. \ref{fig17}
  are upside down.  The reason for the apparent
  contradiction is that $(\wt v, \wt w)$ do not satisfy
  the assumptions of Theorem \ref{m28.5}, specifically,
  $v$ and $w$ must have backward characteristics starting
  at every point in space-time. However, some
  characteristics may not continue in the forward
  direction of time; see Fig. \ref{fig15}. This property
  is not preserved under time reversal, so even if it
  holds for $(v,w)$, it does
  not need to hold for $(\wt v, \wt w)$.
  In these examples left and right are reversed, so,
  we have frozen regions with \(v\) and \(w\)
  decreasing. For such regions, \(v\) and \(w\) must
  immediately thaw, but they don't, so these examples
  violate \ref{a5.4}

\end{remark}

\subsection{Differentiability of solutions}

\begin{remark}\label{f23.3}
   It is not a surprise that for generic smooth
    initial conditions, $v$ and $w$ are non-differentiable
    at the boundary between liquid and frozen regions
    (although one-sided derivatives may exist). The system
    \eqref{n10.4}-\eqref{n10.6} is hyperbolic in the
    liquid and frozen regions. For smooth initial data
    with isolated extrema, the slopes of the thawing and
    freezing curves are non-characteristic (slope of the
    curve is not a characteristic direction---characteristic directions have slope one or zero). So
    we expect singularities (points of
    non-differentiability) in the values of \(v\) and
    \(w\) on the freezing and thawing curves to propagate
    forward in time. However, if the initial data is
    smooth, although the derivatives of \(v\) and \(w\)
    jump across these curves, they remain tangentially
    smooth on both sides, so no new singularities propagate
    from the interior of these curves. New singularities
    do, however, propagate along forward characteristics
    that originate at the corners where two
    thawing curves meet or where a freezing curve meets a
    thawing curve, as illustrated in the figures (from a
    numerically computed example) below.

    \FloatBarrier
  
  \begin{center}
\begin{figure}
\includegraphics[width=0.8\linewidth]{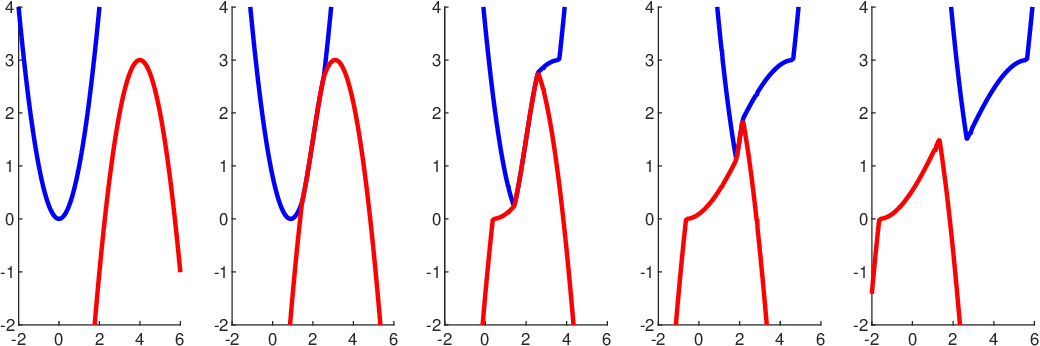}
\caption{Graphs
    of \(v\) and \(w\) at a sequence of times. Initial conditions:
    \(v_{0}(x)=x^{2}\) and \( w_{0}(x)= -(x-4)^{2}+3\).}
\label{fig:23}
\end{figure}
\end{center}

\begin{center}
\begin{figure}
\includegraphics[width=0.5\linewidth]{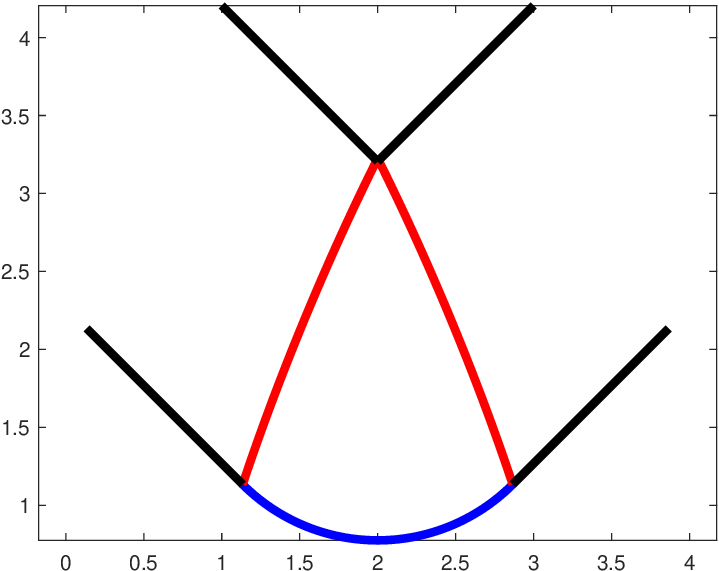}
\caption{Frozen triangle and some
    characteristics emanating from the corners.
     Initial conditions:
    \(v_{0}(x)=x^{2}\) and \( w_{0}(x)= -(x-4)^{2}+3\).}
\label{fig:24}
\end{figure}
\end{center}

  \FloatBarrier  
  
   Suppose that
\begin{align*}
v(x,0) &= x^2,\\
w(x,0) & = -(x-4)^{2}+3.
\end{align*}
Meeting location $z=z(b)$ of characteristics corresponding to value $b=v(x_1,0)=w(y_1,0)\in[0,3]$ is 
\begin{align}\label{j24.1}
z = \frac 1 2 (\sqrt{b}-\sqrt{3-b}+4).
\end{align}
The times of freezing and thawing at value $b$ are given by
\begin{align}\label{j24.2}
S_f(b) &= \frac 1 2 (4-\sqrt{3-b}-\sqrt{b}),\\
S_t(b) &= 
\begin{cases}
\frac 1 2 (3\sqrt{b}-\sqrt{3-b}+4) & b\leq 3/2,\\
\frac 1 2 (-\sqrt{b}+3\sqrt{3-b}+4) & b\geq 3/2.
\end{cases}
\label{j24.3}
\end{align}
We will write the freezing times and thawing times as functions of location. To do that we solve \eqref{j24.1} for $b$ and substitute it into 
\eqref{j24.2}-\eqref{j24.3},
\begin{align*}
b(z)&= \frac{1}{2} \left(3\pm 2 \sqrt{2} \sqrt{-2 z^4+16 z^3-45 z^2+52 z-20}\right),
\qquad z > 2, z < 2,
\\
T_f(z)=S_f(b(z))&= \frac{1}{4} \Big(-\sqrt{4 \sqrt{2} \sqrt{-(z-2)^2 \left(2 z^2-8 z+5\right)}+6}\\
&\qquad -\sqrt{6-4 \sqrt{2} \sqrt{-(z-2)^2 \left(2 z^2-8 z+5\right)}}+8\Big),\\
T_t(z)=S_t(b(z))&=
\frac{1}{4} \Big(-\sqrt{4 \sqrt{2} \sqrt{-(z-2)^2 \left(2 z^2-8 z+5\right)}+6}\\
&\qquad +3 \sqrt{6-4 \sqrt{2} \sqrt{-(z-2)^2 \left(2 z^2-8 z+5\right)}}+8\Big).
\end{align*}
The derivatives of $T_f$ and $T_t$ at the corners are as follows.
The right derivatives of $T_f$ and $T_t$ at $z=(4 - \sqrt{3})/2$ are $-1$ and $3$.
The left derivatives of $T_f$ and $T_t$ at $z=(4 + \sqrt{3})/2$ are 1 and $-3$.
Sea Fig. \ref{fig:24} for graphs of $T_f$ and $T_t$.

  Fig. \ref{fig:23} shows the graphs
    of \(v\) and \(w\) at a sequence of times. Fig.
    \ref{fig:24} shows the frozen triangle and some
    characteristics emanating from the corners. The
    derivatives of both \(v\) and \(w\) jump across the
    boundary of the (black) frozen triangle. In addition, the
    derivative of \(v\) jumps across the blue 
    characteristics emanating from the two corners on the
    right side of the triangle and the derivative of \(w\)
    jumps across the red characteristics emanating from
    the two corners on the left side. Note that there is
    no forward \(w\) characteristic emanating from the
    lower right corner, or from any point on the thawing
    boundary that forms the right side of the
    triangle. To within the computational
    accuracy, the one-sided slopes at the freeze-thaw
    corners of the triangle are as described in Proposition \ref{f20.1}.

   Recall from Fig. \ref{fig14} the shape of a
    frozen triangle.  Suppose that the initial conditions
    $v(x,0)$ and $w(x,0)$ are $C^2$.  In the generic case,
    the derivatives of $v$ and $w$ are discontinuous along
    the characteristics emanating from the tip (top
    vertex) of the triangle, although $v$ and $w$ are
    continuous everywhere.  Suppose that the
    characteristic of $v$ emanating from $x_1$ and the
    characteristic of $w$ emanating from $y_1$ meet at the
    tip of the frozen triangle at time $t_1$. The values
    of $v$ to the left of the characteristic for times
    greater than $t_1$ are those to the left of $x_1$ at
    time 0.  The values of $v$ to the right of the
    characteristic for times greater than $t_1$ are those
    of $w$ to the left of $y_1$ at time 0. While we have
    $v(x_1,0)=w(y_1,0)$ by assumption, the derivatives of
    $v$ at $x_1$ and $w$ at $y_1$ do not have to be
    related in any way so, generically, the one-sided
    derivatives of $v$ along the characteristic passing
    through the tip, past $t_1$, will be different. The
    same remark applies to $w$.

    See the explicit formulas in the slightly different  Example \ref{a28.3}. 
    Example \ref{a2.1} (ii) and the accompanying Fig. \ref{fig:31} present some of the above ideas in a different way.
    
\end{remark}

\section{Examples}\label{sec:exam}

\begin{example}

Fig. \ref{fig:24} illustrates a transition from freezing to thawing at two corners of a frozen triangle. This example will illustrate transition from thawing to freezing. Let
\begin{align*}
v(x,0) &= x+2,\\
w(x,0) & = 
\begin{cases}
x+2 & x< -1,\\
x^2 & -1\leq x \leq 1,\\
x& x> 1.
\end{cases}
\end{align*}
Meeting locations $z=z(b)$ of characteristics corresponding to value $b=v(x_1,0)=w(y_1,0)\in[0,1]$ are
\begin{align}\label{j24.5}
z_1& = b-2,\\
z_2&= \frac{1}{2} \left(\sqrt{b}+b-\left(-\sqrt{b}-b+2\right)-2\right).
\label{j24.6}
\end{align}
The times of thawing and freezing  at value $b$ are given by
\begin{align}\label{j24.7}
S_t(b) &= -\sqrt{b} -b+2,\\
S_f(b) &= s_f(b)+ \sqrt{b} = 2-b.\label{j24.8}
\end{align}
We will write the freezing times and thawing times as functions of location. To do that we solve \eqref{j24.5}-\eqref{j24.6} for $b$ and substitute it into 
\eqref{j24.7}-\eqref{j24.8},
\begin{align*}
b_1(z_1)&=2+z_1,\\
b_2(z_2) &= \frac{1}{2} \left(2 z_2-\sqrt{4 z_2+9}+5\right),\\
T_t(z)&=S_f(b_1(z))= -z-\sqrt{z+2},\\
T_f(z)&=S_t(b_2(z))=\frac{1}{2} \left(-2 z+\sqrt{4 z+9}-1\right).
\end{align*}
We have
\begin{align*}
T_t'(z)&= -\frac{1}{2 \sqrt{x+2}}-1,\\
T_f'(z)&=
\frac{1}{2} \left(\frac{2}{\sqrt{4 x+9}}-2\right).
\end{align*}
The right derivatives of $T_t$ and $T_f$  at $z=-2$ are $-\infty$ and $0$.
Sea Fig. \ref{fig27} for graphs of $T_f$ and $T_t$.

\begin{center}
\begin{figure}
\includegraphics[width=0.3\linewidth]{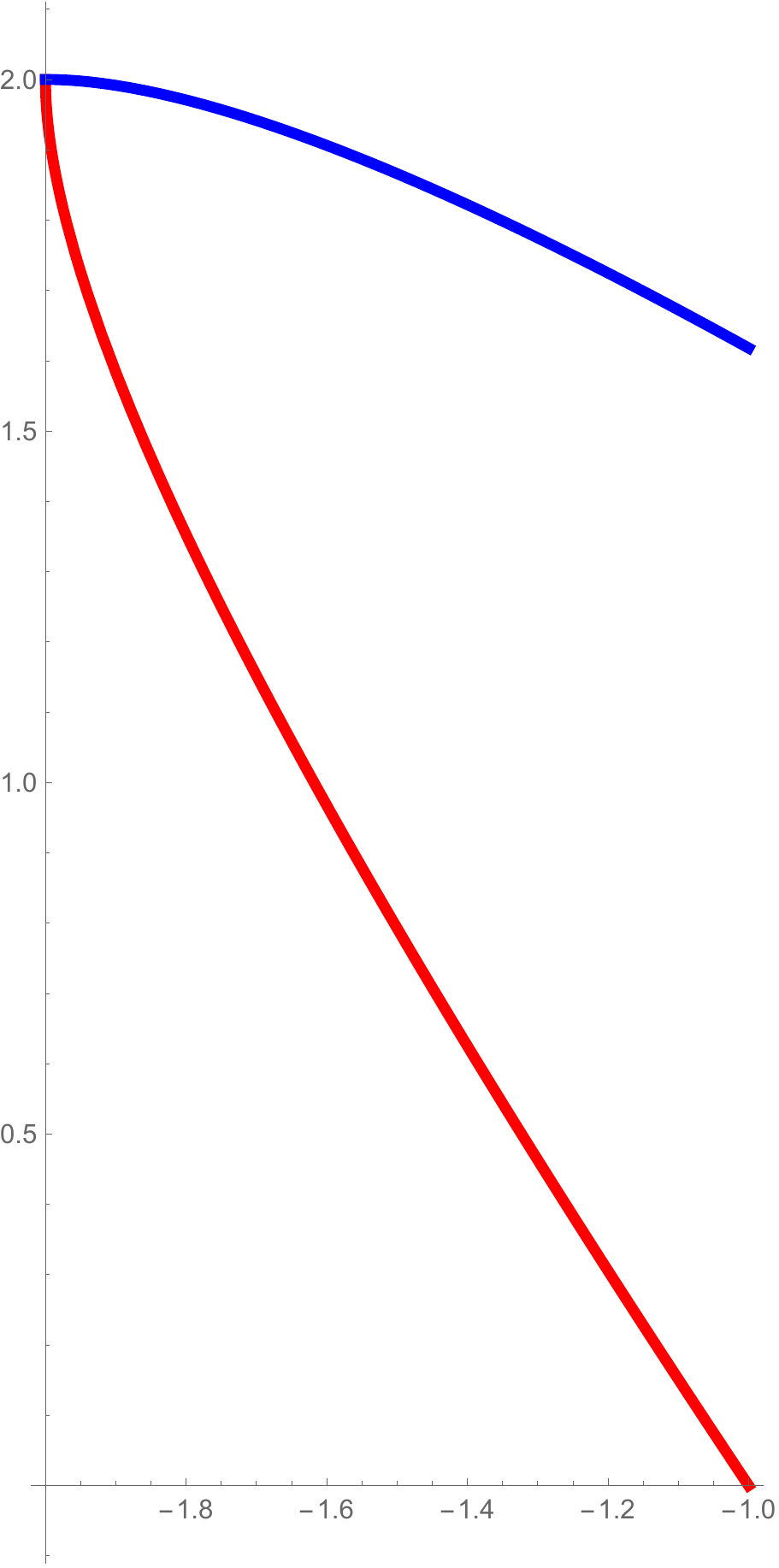}
\caption{Freezing (blue) and thawing (red) boundaries.}
\label{fig27}
\end{figure}
\end{center}

\end{example}

\begin{example}\label{a28.3}
We include another  example illustrating jumps of derivatives of
$v$ and $w$ that admits very simple formulas. Let
 \begin{align*}
      v(x,0) = |x|,
      \qquad
      w(x,0) = \frac{x}{2}.
    \end{align*}
    Then
    \begin{align*}
      v(x,t) &=
      \begin{cases}
        -(x-t) & x<\frac{3t}{5},
        \\
        \frac{2x}{3} &  \frac{3t}{5} < x < 3t,
        \\
        x-t & 3t < x,
      \end{cases}
      \\
      w(x,t) &=
               \begin{cases}
                 \frac{x+t}{2} & x < -t,
                 \\
                 \frac{x+t}{4} & -t < x < \frac{3t}{5},
                 \\
                 \frac{2x}{3} &  \frac{3t}{5} < x < 3t,
                 \\
                 \frac{x+t}{2} & 3t < x.
               \end{cases}
    \end{align*}

    Derivatives of \(v\) jump across
    \begin{enumerate}
       \item Freezing boundary \(x=3t\),
      \item Thawing boundary \(x = \frac{3t}{5}\).
      \end{enumerate}
     Derivatives of \(w\) jump across
     \begin{enumerate}
      \item Freezing boundary \(x=3t\),
      \item Thawing boundary \(x = \frac{3t}{5}\),
      \item The $w$-characteristic propagating from
        \((0,0)\), the point where the derivative of
        \(v(x,0)\) jumps. Note that the jump in the
        derivative of \(v\) at \((0,0)\) does not
        propagate along a $v$-characterisitic, as the point
        \((0,0)\) is part of a thawing boundary which has
        no forward \(v\) characteristics.
     \end{enumerate}   
 \FloatBarrier  
\begin{center}
\begin{figure}[h]
\includegraphics[width=0.9\linewidth]{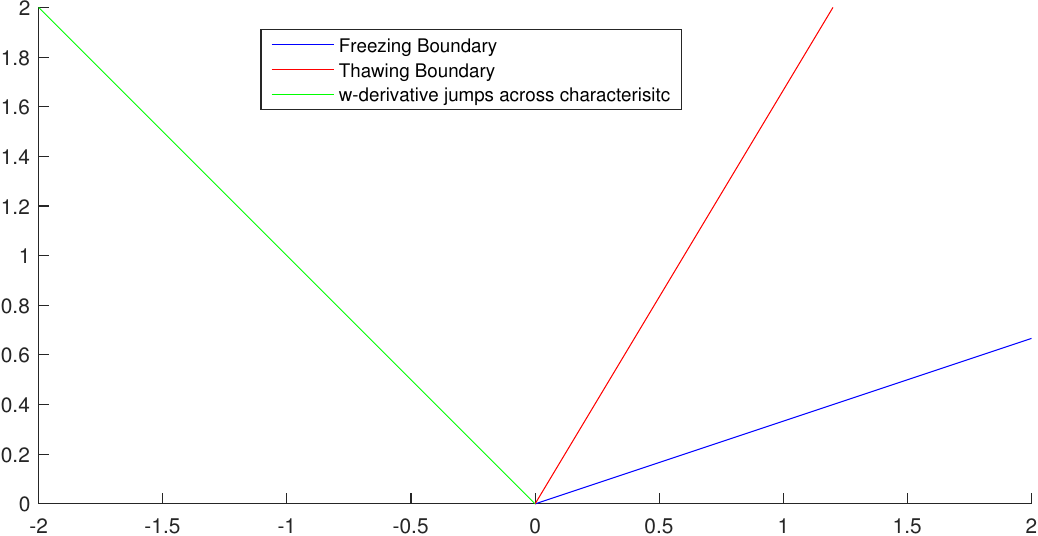}
\caption{Freezing (blue) and thawing (red) boundaries for
   Example \ref{a28.3}.}
\label{fige12}
\end{figure}
\end{center}
\end{example}

\begin{example}\label{a2.1}
What shapes can the frozen region  take? While we are far from being able to give a full answer to this question, we will point out some restrictions on possible shapes under Assumption \ref{m29.2}. Some restrictions on the slopes of freezing and thawing curves were stated in \eqref{m22.5} and \eqref{m22.6}. Below we present two extra restrictions.

(i) First, the liquid region cannot contain a
characteristic rectangle $R$ which
 has edges with slopes 1 and $-1$,
is contained in the liquid region except for its top three vertices,
 its top vertex belongs to the relative interior of the freezing boundary, the left and right vertices belong to relative interiors of the thawing curves, and the bottom vertex belongs to the interior of the liquid region (see Fig. \ref{fig29}). 
\begin{center}
\begin{figure}
\includegraphics[width=0.7\linewidth]{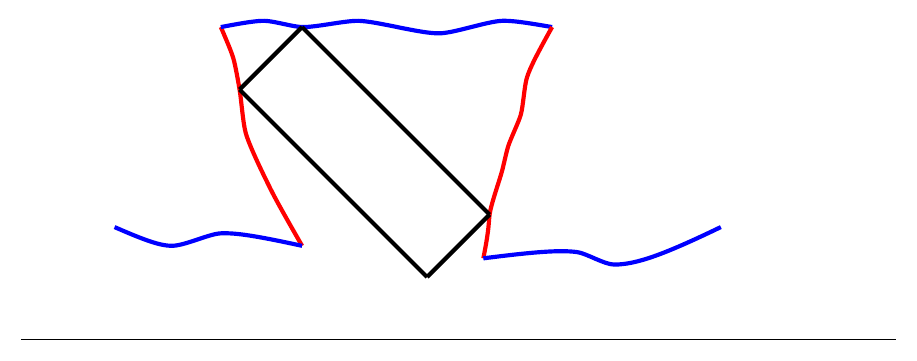}
\caption{The liquid region cannot contain a rectangle with three top vertices on its boundary (see Example \ref{a2.1} (i) for details).}
\label{fig29}
\end{figure}
\end{center}
The 
reason why this is impossible is that
the sides of $R$ with slopes 1 are characteristics of $v$, the sides with slopes $-1$ are characteristics of $w$, and the values of $v$ and $w$ on these characteristics are equal because they have to be equal at the three top vertices. Hence
 $v$ must be equal to $w$ at the bottom vertex, which contradicts the assumption that this vertex belongs to the interior of the liquid region.

(ii) The second restriction is related to characteristics emanating from the top vertex $(y,s)$ of shapes such as frozen triangles or Christmas tree in Figs. \ref{fig14}-\ref{fig17}. For $t>s$, in a neighborhood of $s$, $x\to v(x,s)$ is decreasing to the left of $y+(t-s)$ and increasing to the right of $y+(t-s)$. Hence, the characteristic $\{(y+(t-s), t): t\geq s\}$ of $v$ must end at a meeting point of a freezing curve and thawing curve. A similar remark applies to the characteristic of $w$ moving in the opposite direction. 
The following remarks pertain to Fig. \ref{fig:31}.
  \begin{enumerate}
  \item All points on thawing curves with positive slopes are
    minima of \(v\) and all points on thawing curves with negative
    slopes are maxima of \(w\).
    
  \item There can be no \(v\) or \(w\) extrema in the
    frozen region or the interior of the freezing
    boundary, and extrema in the liquid region
    propagate along characteristics.
    
  \item Thus minima of \(v\) propagate along \(v\) characteristics
    until they reach a freeze/thaw corner, whence they continue
    as thawing boundaries.
    We say
      ``freeze/thaw'' to indicate that the freezing
      boundary is below the (positive-sloped) thawing
      boundary, as pictured on the bottom right corner of
      Fig. \ref{fig29}. At a ``freeze/thaw'' corner, a
      characteristic freezes, then thaws. A thaw/freeze
      corner is one where the thawing curve is below the
      freezing curve, so that a frozen characteristic
      thaws, then refreezes.

  \item Maxima of \(v\) propagate only along
    characteristics, and only in the liquid region. Each
    such characteristic continues until it meets a
    \(v\)-minimum at a thaw/freeze corner, where they
    annihilate each other.
    
  \item A similar description applies to
    \(w\)-extrema. 
  \item An important corollary is that the number of
    extrema of each of \(v\) and \(w\) can only decrease
    with time. In the figure, each of \(v\) and \(w\)
    start with one mimimum and one maximum point. After
    some time, no extremal points remain.
  \end{enumerate}

  \begin{center}
\begin{figure}[h]
\includegraphics[width=0.7\linewidth]{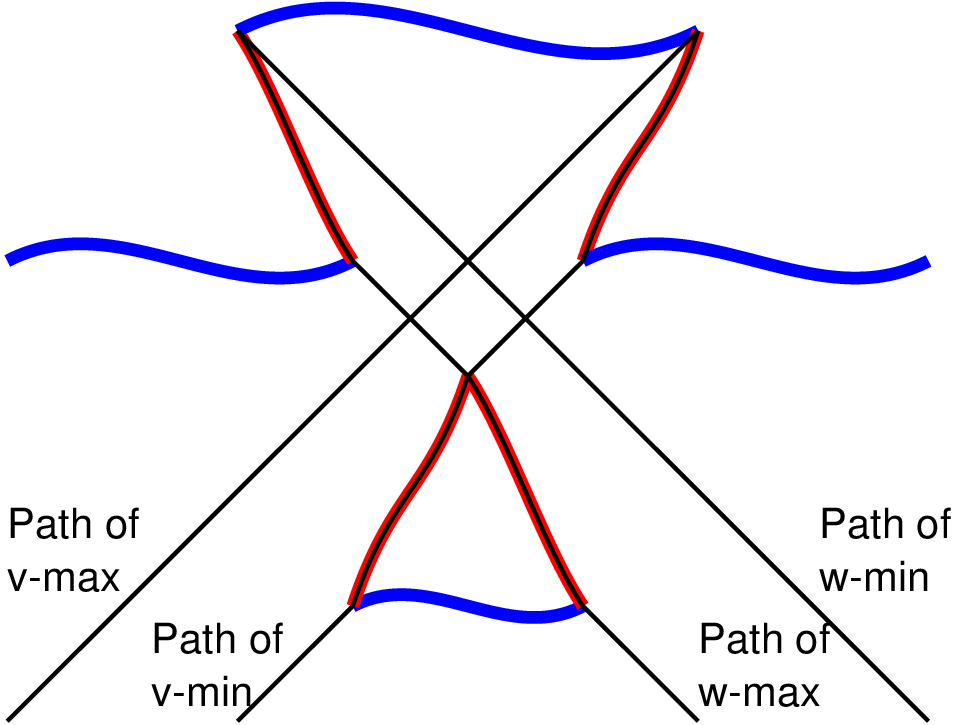}
\caption{}
\label{fig:31}
\end{figure}
\end{center}

\end{example}

\section{Acknowledgments}

We are grateful to Pablo Ferrari, Jean-Francois Le Gall, Jeremy Hoskins, Adam Ostaszewski and Stefan Steinerberger for the most useful advice.

\newcommand{\etalchar}[1]{$^{#1}$}


\begin{thebibliography}{BDSG{\etalchar{+}}15}

\bibitem[AAV11]{AAV}
Gideon Amir, Omer Angel, and Benedek Valk\'{o}.
\newblock The {TASEP} speed process.
\newblock {\em Ann. Probab.}, 39(4):1205--1242, 2011.

\bibitem[ABD21]{fold}
Jayadev~S. Athreya, Krzysztof Burdzy, and Mauricio Duarte.
\newblock On pinned billiard balls and foldings.
\newblock {\em Indiana U. Math. J.}, 2021.
\newblock To appear, Arxiv:1807.08320.

\bibitem[AHR09]{AHR}
Omer Angel, Alexander Holroyd, and Dan Romik.
\newblock The oriented swap process.
\newblock {\em Ann. Probab.}, 37(5):1970--1998, 2009.

\bibitem[AHRV07]{AHRV}
Omer Angel, Alexander~E. Holroyd, Dan Romik, and B\'{a}lint Vir\'{a}g.
\newblock Random sorting networks.
\newblock {\em Adv. Math.}, 215(2):839--868, 2007.

\bibitem[BDS83]{BDS}
C.~Boldrighini, R.~L. Dobrushin, and Yu.~M. Sukhov.
\newblock One-dimensional hard rod caricature of hydrodynamics.
\newblock {\em J. Statist. Phys.}, 31(3):577--616, 1983.

\bibitem[BDSG{\etalchar{+}}15]{Bert}
Lorenzo Bertini, Alberto De~Sole, Davide Gabrielli, Giovanni Jona-Lasinio, and Claudio Landim.
\newblock Macroscopic fluctuation theory.
\newblock {\em Rev. Modern Phys.}, 87(2):593--636, 2015.

\bibitem[BHS22]{KBJHSS}
Krzysztof Burdzy, Jeremy~G. Hoskins, and Stefan Steinerberger.
\newblock From pinned billiard balls to partial differential equations, 2022.
\newblock Preprint. ArXiv 2209.01503.

\bibitem[BO23]{KBAO}
Krzysztof Burdzy and Adam~J. Ostaszewski.
\newblock Freezing in space-time: a functional equation linked with a {PDE} system.
\newblock {\em J. Math. Anal. Appl.}, 524(2):Paper No. 127018, 11, 2023.

\bibitem[DF77]{DF}
R.~L. Dobrushin and J.~Fritz.
\newblock Non-equilibrium dynamics of one-dimensional infinite particle systems with a hard-core interaction.
\newblock {\em Comm. Math. Phys.}, 55(3):275--292, 1977.

\bibitem[FFGS23]{FerEt}
Pablo~A. Ferrari, Chiara Franceschini, Dante G.~E. Grevino, and Herbert Spohn.
\newblock Hard rod hydrodynamics and the L\'evy Chentsov field.
\newblock {\em Ensaios Matem\'aticos}, 38:185--222, 2023.

\bibitem[FO23]{FerOl}
Pablo Ferrari and Stefano Olla.
\newblock Macroscopic diffusive fluctuations for generalized hard rods dynamics, 2023.
\newblock Preprint. ArXiv 2305.13037.

\bibitem[GG08a]{Gaspard_2}
P.~Gaspard and T.~Gilbert.
\newblock Heat conduction and {F}ourier's law by consecutive local mixing and thermalization.
\newblock {\em Phys. Rev. Lett.}, 101:020601, Jul 2008.

\bibitem[GG08b]{Gaspard_2008}
Pierre Gaspard and Thomas Gilbert.
\newblock Heat conduction and {F}ourier's law in a class of many particle dispersing billiards.
\newblock {\em New Journal of Physics}, 10(10):103004, oct 2008.

\bibitem[GG08c]{Gaspard_2008_1}
Pierre Gaspard and Thomas Gilbert.
\newblock On the derivation of {F}ourier's law in stochastic energy exchange systems.
\newblock {\em Journal of Statistical Mechanics: Theory and Experiment}, 2008(11):P11021, nov 2008.

\bibitem[KS91]{KS}
Ioannis Karatzas and Steven~E. Shreve.
\newblock {\em Brownian motion and stochastic calculus}, volume 113 of {\em Graduate Texts in Mathematics}.
\newblock Springer-Verlag, New York, second edition, 1991.

\bibitem[Lax84]{Lax1984}
Peter~D. Lax.
\newblock Shock waves, increase of entropy and loss of information.
\newblock In {\em Seminar on nonlinear partial differential equations ({B}erkeley, {C}alif., 1983)}, volume~2 of {\em Math. Sci. Res. Inst. Publ.}, pages 129--171. Springer, New York, 1984.

\bibitem[Ser99]{Serre}
Denis Serre.
\newblock {\em Systems of conservation laws. 1}.
\newblock Cambridge University Press, Cambridge, 1999.
\newblock Hyperbolicity, entropies, shock waves, Translated from the 1996 French original by I. N. Sneddon.

\bibitem[Smo94]{Smoller}
Joel Smoller.
\newblock {\em Shock waves and reaction-diffusion equations}, volume 258 of {\em Grundlehren der mathematischen Wissenschaften [Fundamental Principles of Mathematical Sciences]}.
\newblock Springer-Verlag, New York, second edition, 1994.

\end{thebibliography}
\end{document}